\newtheorem{teo}{Theorem}[section]
\newtheorem{lema}[teo]{Lemma}
\newtheorem{remark}[teo]{Remark}
\newtheorem{corolario}[teo]{Corollary}
\newtheorem{prop}[teo]{Proposition}
\newtheorem{definition}[teo]{Definition}
\def\XXint#1#2#3{{\setbox0=\hbox{$#1{#2#3}{\int}$ }
\vcenter{\hbox{$#2#3$ }}\kern-.6\wd0}}
\newcommand{\R}{\mathbb{R}}
\begin{document}
\title{Linear nonlocal diffusion problems in metric measure spaces}

\author{An\'{i}bal Rodr\'{i}guez-Bernal %
  \thanks{Partially supported by the projects MTM2012-31298 and Grupo CADEDIF GR58/08, Grupo 920894. email: arober@mat.ucm.es
  }}
\affil{Departamento de Matemática Aplicada\\
Universidad Complutense de Madrid, 28040, Madrid,  \&\\
Instituto de Ciencias Matem\'aticas, CSIC-UAM-UC3M-UCM, Madrid}
\author{Silvia Sastre-G\'{o}mez  %
  ${}^{*,}$\thanks{Partially supported by the FPU grant from the Spanish Ministerio de Educación and the projects MTM2012-31298 and Grupo CADEDIF GR58/08, Grupo 920894. email: silviasastre@mat.ucm.es.}}
\affil{Departamento de Matemática Aplicada\\Universidad Complutense de Madrid, 28040, Madrid}
\date{ \today}
%\author{An\'{i}bal Rodr\'{i}guez-Bernal and Silvia Sastre-G\'{o}mez}
%% Use \authorrunning{Short Title} for an abbreviated version of
%% your contribution title if the original one is too long
%\institute{An\'{i}bal Rodr\'{i}guez-Bernal \at Dto. de Matem\'atica Aplicada,
%Universidad Complutense de Madrid, 28040, Madrid,  \&\\
%Instituto de Ciencias Matem\'aticas, CSIC-UAM-UC3M-UCM, Madrid
%\email{arober@mat.ucm.es}
%\and Silvia Sastre-G\'{o}mez \at Dto. de Matem\'atica Aplicada,
%Universidad Complutense de Madrid, 28040, Madrid\\ \email{silviasastre@mat.ucm.es}}
%
% Use the package "url.sty" to avoid
% problems with special characters
% used in your e-mail or web address
%
\maketitle

%%\tableofcontents

\begin{abstract}
	 The aim of this paper is to provide a comprehensive study of
         some  linear nonlocal diffusion  problems  in metric measure
         spaces. These include, for example, open subsets in $\R^N$, 
	 graphs, manifolds,  multi-structures or some fractal
         sets. For this,  we study regularity,  compactness,
         positiveness and the spectrum of the stationary nonlocal operator.  
	 Then we study the solutions of linear evolution  nonlocal
         diffusion problems, with emphasis in  similarities  and
         differences with the standard heat equation in smooth domains. 
	 In particular prove weak and strong maximum principles and
         describe the asymptotic behaviour   using spectral methods. 
\end{abstract}

%% AMS Math Classification: primary: 45-A, 45-C, 45M05, 45M20, 45P,  47G10, 
%% secondary: 

\section{Introduction}

Diffusion is the natural process by which some magnitude (heat or matter, for
example) is transported from one part of a system to another as a
result of random molecular motions. As such, diffusion has a prominent
role in distinct fields such as biology, thermodynamics and even
economics. 

In smooth media (e.g. an open region in the Euclidean space or  a smooth
manifold) classical diffusion models include differential operator
such as the Laplacian and diffusion problems are usually described in
terms of partial differential equations \cite{Crank}.  As the real world
is nonsmooth, in the last decade there has been great effort in developing similar
techniques and structures from the realm of differential equations to
analyze diffusion processes in nonsmooth media, including some fractal
like sets, see e.g. \cite{Havlin,Strichartz,kigami_book}. 

There is another approach, however, that allows to describe and model
diffusion processes by means of nonlocal models, see e.g.
\cite{Rossi_libro}, which we apply here in smooth and nonsmooth media. Assume
then that $(\Omega,\mu)$ is a measure space  and  $u(x,t)$ is the
density of some  population at the point $x\in \Omega$ at time
$t$. Also assume  $J(x,y)$ is a
nonnegative function defined in $\Omega\times\Omega$, that represents
the density of probability of a member of that population to jump from a location $y$ to $x$.
Hence $\int_{\Omega}J(y,x)dy=1$ for all $x\in \Omega$. Then $\int_{\Omega}J(x,y)u (y,t)dy$ is the
rate at which the individuals arrive to location $x$ from all other
locations $y\in \Omega$. On the other hand,
$-\int_{\Omega}J(y,x)dy\,u(x,t) = -u(x,t)$ is the rate at which the
individuals are leaving from location $x$ to all other locations $y\in
\Omega $.  Then, the time evolution of the population $u$ in $\Omega$
can be written as 
\begin{equation}\label{introduction}
\left\{
\begin{array}{lll}
u_t(x,t)&\displaystyle=\int_{\Omega}J(x,y)u(y,t)dy-u(x,t), &x\in\Omega, \\
u(x,0)&=u_0(x), & x\in\Omega.
\end{array}
\right.
\end{equation}
where $u_0$ is the initial distribution of the population.  This
problem and variations of it have been previously used to model
diffusion processes, in \cite{Rossi_libro}, \cite{Rossi}, \cite{fife},
and \cite{hutson}, for example, with $\Omega$ and open set in
$\R^{N}$. However, nonlocal diffusion models like (\ref{introduction})
can be naturally defined in  measure spaces, since we just need
to consider the density of probability of jumping from a location $x$
in $\Omega$ to a location $y$ in $\Omega$, given by the function
$J(x,y)$. 
This allows us studying   diffusion processes  in very different type of spaces, 
like: graphs, (which are used to model complicated structures in chemistry, molecular 
biology or electronics, or they can also represent basic electric circuits into digital 
computers), compact manifolds,  multi-structures  composed by several compact sets 
with different dimensions, (for example a dumbbell domain), or even some fractal sets as 
the Sierpinski gasket, \cite{Havlin, Kigami, Strichartz}. Some of this
spaces are introduced in Section \ref{sec:metr-meas-spac}.  

Since it is always convenient to speak about continuity, in  this
work, we consider problems like (\ref{introduction}) defined in
metric measure spaces, $(\Omega,\mu, d)$, which are defined as
follows. For more information see \cite{rudin}.  
 \begin{definition}\label{def_measure}
A {\bf metric measure space} $( \Omega,\mu, d)$ is a metric space 
$( \Omega,d)$  with a $\sigma$-finite, regular, and complete Borel measure 
$\mu$ in $ \Omega $, and  that  associates a finite  positive measure to the 
balls of $ \Omega $.
\end{definition} 

In this context, we take  $X=L^p(\Omega)$, 
$1\le p\le \infty$, or $X=\mathcal{C}_b(\Omega)$ and  consider
nonlocal diffusion problems of the form 
\begin{equation}\label{model_intro}
	\left\{
	\begin{array}{rl}
	u_t(x,t) & =K_J u(x,t) - h(x) u(x,t),\quad x\in\Omega,\, t>0,\\
	u(x,t_0)  & =u_0(x), \quad x\in\Omega,
	\end{array}
	\right.
\end{equation}
where $u_0\in X$,   $h\in L^{\infty}(\Omega)$ or in  $\mathcal
 {C} _b(\Omega)$, and the nonlocal diffusion operator  
$K_Ju$ is given by 
$$ 
K_Ju(x,t)=\int_{\Omega}J(x,y)u(y,t)dy . 
$$  
We will not assume, unless 
 otherwise made explicit, that  $\int_{\Omega}J(x,y)dy=1$.  A
 particular case which we will pay attention below is when $h(x) =
 \int_{\Omega}J(x,y)dy$. 

One of our main goals in this paper is to show some similarities and differences between
(\ref{model_intro}) and solutions of the classical heat equation. We
will show in particular that both models share positivity properties
such as the strong maximum principle. However solutions of
(\ref{model_intro}) do not smooth in time, except asymptotically as
$t\to \infty$.

The paper is organized as follows.  In Section
\ref{sec:metr-meas-spac} we present several  metric measure
spaces in which all the analysis carried out in this paper
holds. Those include open sets of the euclidean space, graphs, compact
manifolds, multi-structures (sets composed by several compact sets with
different dimensions joint together) or even some fractal sets. In Section
\ref{sec:line-nonl-diff} we  derive a comprehensive  study of the
linear operator $K_J-hI$. We will discuss in particular continuity and
compactness in different function spaces, including the case of
convolution-type operators.

%For
% this, we  will start 
%studying the linear nonlocal diffusive integral operator  
%\begin{equation}\label{K_J}
%	K_Ju =\int_{\Omega}J(\cdot, y)u(y)dy . 
%\end{equation}
%We will prove that under hypotheses on the integrability or continuity
%of the kernel $J$, $K_J$ is a bounded linear operator in
%$X=L^p(\Omega)$ or $X=\mathcal{C}_b (\Omega)$. Moreover, under these
%same hypotheses on $J$, we will prove that $K_J$ is compact. 
%%To prove the compactness, we will 
%%show that $K_J$ can be approximated by operators with finite rank, and 
%%we will also use Ascoli-Arzela Theorem.
%%
%We will also study  the particular case of the convolution operator 
%$$K_{J_0}u =\int_{\Omega}J_0(\cdot-y)u(y)dy,$$ 
%where $J_0:\R^N\to\R$ and $\Omega \subset \R^{N}$. 
%\vspace{0.2 cm}

We also study the positiveness of the diffusive operator $K_J$. Under
the assumption
 \begin{equation}\label{positivity_J_K_aum_pos}
		J(x,y)\!>\!0\mbox{ for all }x,\,y\in\Omega\mbox{, such that }d(x,y)
		\!<\!R,
\end{equation}
for some $R>0$ and the geometric condition that $\Omega$ is
$R$-connected (see Definition \ref {R_connected}), we show that for a
nonnegative nontrivial function $z$, the set of points in $\Omega$
where $K_Jz$ is strictly positive is larger than that of $z$.  This
will also allow us to use Kre\u{\i{}}n-Rutman Theorem, (see \cite
{krein_rutman}), to obtain that the spectral radius in
$\mathcal{C}_b(\Omega)$ of the operator $K_J$ is a positive simple
eigenvalue, with a strictly positive eigenfunction associated.   Condition
(\ref{positivity_J_K_aum_pos}) is also shown to be somehow optimal. 

In the last part of Section \ref{sec:line-nonl-diff} we study similar
questions for the  nonlocal operator 
$K_J-hI$, with $h\in L^{\infty}(\Omega)$. In particular, we derive
Green's formulas in the spirit of \cite{Rossi_libro} and characterize
the spectrum, which is also shown to be independent of the function
space.

In Section \ref{sec:line-evol-equat} we analyze the  solutions  of
\eqref{model_intro},  as   well as  the monotonicity properties of the
solutions. In particular we will show that
(\ref{positivity_J_K_aum_pos}) implies that \eqref{model_intro} has a
strong maximum principle. 

% in $X=L^p(\Omega)$, 
% with $1\le p\le \infty$ or $X=\mathcal{C}_b(\Omega)$. 
% We recover and generalize the study of existence and uniqueness of  solution of 
% \eqref{model_intro}, with $h=h_0$ or $h=Id$, and $\Omega\subset \R^N$ 
% a domain, has been previously done in \cite{Rossi_libro, Rossi} in $L^1(\Omega)$. We will prove also that under hypothesis \eqref{positivity_J_K_aum_pos} on the positivity of $J$, we have a strong 
%maximum principle.\\
%
%, i.e., if the initial data $u_0\ge 0$, then the solution to 
%\eqref{model_intro}, $u(t)$, is strictly 
%positive for all $t>0$. \\

We then show that although solutions of (\ref{model_intro}) do not
regularize, because they carry the singularities of the initial data,
there is a subtle asymptotic smoothness for large times. In particular
 the semigroup $S(t)$ of \eqref{model_intro} is asymptotically smooth
 as in  \cite[p. 4]{Hale}. 

Finally, using the techniques of Riesz projections and the fact that
the spectrum is independent of the space, we are able to describe the
asymptotic behavior of the solutions of (\ref{model_intro}).

\section{Examples of metric measure spaces}
\label{sec:metr-meas-spac}

In the following sections we will consider a general measure metric
space $(\Omega,\mu,d)$ as in Definition \ref{def_measure}. Below we
enumerate some examples to which we can apply the theory developed
throughout this work.

$\bullet$ {\sc A subset of $\R^N$: } Let $ \Omega$ be a Lebesgue
measurable set of $\R^N$ with positive measure. A particular case is
the one in which $\Omega$ is an open subset of $\R^N$, which can be
even $\Omega=\R^N$. We consider the metric measure space
$(\Omega,\mu,d)$ where $\Omega\subseteq \R^N$, $\mu$ is the Lebesgue
measure on $\R^N$, and $d$ is the Euclidean metric of $\R^N$.

$\bullet$  {\sc Graphs: }  
We consider a non empty, connected and finite  graph in $\mathbb{R}^N$
defined by $G=(V,E)$, where $V\subset\mathbb{R}^N$ is the
finite set of {\it vertices}, and the {\it edge set} $E$, consists of
a collection of Jordan curves
\[
		E=\left\{\left.\pi_j: [0,\, 1]\to \mathbb{R}^N\right | \, j\in 
		\left\{1,\,2,\,3,...,\,n \right\}\right\}
\]
where $\pi_j\in\mathcal{C}^1\left( [0,1]\right)$ is injective with
$\pi_j(0), \pi_j(1) \in V$.  
We  identify the graph  with its associated network.
\[
G=\bigcup\limits_{j=1}^{n} e_j =\bigcup\limits_{j=1}^{n} \pi_j 
\left( [0,\, 1]\right) \subset \mathbb{R}^N 
\] 
and we  assume that any two edges $e_j\ne e_h$ satisfy that the intersection $e_j\cap e_h$
is either empty, one vertex or two vertices.

We define the measure structure of this graph. The edges have 
associated the one dimensional Lebesgue measure. Hence a set $A\subset
e_i$ is {\bf measurable} if and only if	 $\pi_i^{-1}(A)\subset[0,1]$
is measurable, and for any measurable set  $A 	\subset e_i$, we
consider the measure $\mu_i$, defined as   
\begin{displaymath}
\mu_i
(A)=\int_{\pi_i^{-1}(A)}\|\pi_i'(t)\|dt .   
\end{displaymath}
 In particular,  the length
of the edge $e_i$ is defined as the length of the curve  $\pi_i$,
	\begin{equation}\label{length_curve}
		\mu_{i}(e_i) = \int_0^1 \|\pi_i' (t)\| 
		dt .
	\end{equation}
Therefore,  a set $A\subset G$ is {\bf measurable} if and only if
$A\,\cap\, e_i$ is measurable for every $i\in
\left\{1,\,2,\,3,...,\,n\right\}$, and its measure  is given by 
	\begin{displaymath} %%\label{measure_leb_graph}
		\mu_{G} (A)=\sum\limits_{i=1}^{n} \mu_i(A\cap e_i).
	\end{displaymath}
	
With this, it turns out that a function $f:G\to\mathbb{R}$
is measurable if and only if $f_{|e_i}:e_i\to\mathbb{R}$ is
measurable, if and only if $f \circ \pi_{j} : [0,1] \to \R$ is
measurable.

For $1\le p<\infty$, we set  $f\in L^p(G)=\prod_{i=1}^{n} L^p(e_i)$,
with norm $\|f\|_{L^p(G)}=\sum_{i=1}^{n} \|f\|_{L^p(e_i)}<\infty$, 
where, $\|f\|_{L^p(e_i)}=\left(\int_0^1 \left|f(\pi_i(t))\right|^p
  \|\pi_i' (t)\| dt\right)^{1/p}=\left(\int_0^1
  \left|f(\pi_i(\cdot))\right|^p d\mu_i \right)^{1/p}$. For $p=\infty$,
$f\in L^{\infty}(G)=\prod_{i=1}^{n} L^{\infty}(e_i)$, with norm
$\|f\|_{L^{\infty}(G)}=\max_{i=1,\dots,n} \|f\|_{L^{\infty}(e_i)}< \infty$,
where, $\|f\|_{L^{\infty}(e_i)}=\sup_{t\in [0,1]} \left|
  f(\pi_i(t))\right|$.

Now, we  describe the metric associated to the 
graph. For $v,\,w\in G$ the {\bf geodesic distance} from $v$ 
to $w$, $d_{g}(v,w)$,  is the length of the shortest path from $v$ to
$w$. This distance, $d_g$,  defines  the metric structure  associated
to the graph $G$. Observe that  since the graph is connected, 
there always exists the path from $v$ to $w$, and since the graph is 
finite the geodesic metric $d_g$ is equivalent to euclidean metric in
$\R^N$.  With this, a continuous  function $f:G\to\R$ has a norm
$\|f\|_{\mathcal{C}(G)}=\max_{i=1,\dots,n}
\|f\|_{\mathcal{C}(e_i)}<\infty$, where
$\|f\|_{\mathcal{C}(e_i)}=\sup_{t\in 
  [0,1]} \left|f(\pi_i(t))\right|$. 
	
Thus the graph defines a  metric measure space  $(G,\mu_G,d_g)$.

$\bullet$  {\sc Compact Manifolds: } Let $\mathcal{M}\subset\R^N$ be a 
compact manifold that we define  as follows.  Let  $U$ be an open 
bounded set  of $\R^d$, with $d\le N$, and let  $\varphi:U\to 
\R^N$ be an application such that it defines a diffeomorphism  from 
$\overline{U}$ onto its image $\varphi(\overline{U})$. Then we
define the compact manifold as
$\mathcal{M}=\varphi(\overline{U}) $.

A natural measure in $\mathcal{M}$ is the one for which, $A\subset 
\mathcal{M}$ is measurable if and only if $\varphi^{-1}(A)\subset\R^d$ 
is measurable. Hence for any measurable set  $A\subset \mathcal{M}
$, we define the measure $\mu$ as, 
\begin{equation}\label{measure_compact_variedad}
	\mu(A)=\int_{\varphi^{-1}(A)}\sqrt{g}\,dx, 
\end{equation} 
where $g=det(g_{ij})$ and  $g_{ij}\!=\!\langle{\partial \varphi\over \partial x_i},
{\partial \varphi\over \partial x_j}\rangle$.  Since the
compact manifold $\mathcal{M} =\varphi(\overline{U}) \subset
\R^N$ and $U \subset \R^{d}$,  then  the measure
\eqref{measure_compact_variedad} is equal to the  
$d$-Hausdorff measure in $\R^{N}$ restricted to $\mathcal{M}$, 
(see \cite[p. 48]{Simon}).

To define a  natural metric in $\mathcal{M}$, let   $\ell(c)$ be the
length of a  curve, $c$, in $\R^{N}$  defined as  	in
\eqref{length_curve}. Then we define  the {\bf geodesic distance} between two points
$p, q$ in the  manifold $\mathcal{M}$ as  
%	 (see \cite[p.  164]{Gamboa_Ruiz}):
\begin{displaymath} %%\label{geodesic_dist}
d_g(p,q):=\inf \{\ell(c)\,|\; c:[0,1]\to \mathcal{M}\;\mbox{ smooth 
curve, } c(0)=p, \,c(1)=q \}.
\end{displaymath}
Since $\mathcal{M}\subset\R^N$ is compact, the geodesic metric, $d_g$, and the 
euclidean metric of $\R^{N}$, $d$, are equivalent. 

Thus we have  the metric measure space  $(\mathcal{M},\mathcal{H}^d,d)$ 
where $\mathcal{H}^d$ is the $d$-dimensional Hausdorff measure in
$\R^{N}$  and $d_g$ is the geodesic metric, which is equivalent to the
Euclidean metric  of $\R^N$.  
	
	$\bullet$  {\sc Multi-structures:} Now, we consider a multi-structure, 
	composed 
	by several  compact sets with different dimensions. 
	For example, we can think 
	in a piece of plane joined to a curve that is joined to a sphere in 
	$\R^N$, or we 
	can think also in a 
	dumbbell domain. Therefore, we  are going to define an appropriate 
	measure and metric for these multi-structures. 
	
Consider a collection of  metric measure spaces $\big\{\!(X_i,\mu_i, d_i)\!\big\}
_{i\in \{1,\dots,n\}}$, with its respective measures, $\mu_i$,   and 
metrics, $d_i$, defined as above. Moreover, we assume  
 the  measure  spaces  $\big\{(X_i,
 \mu_i)\big\}_{i\in \{1,\dots,n\}}$ satisfy  
$$
\mu_i(X_i\cap X_j)=\mu_j(X_i\cap X_j)=0,
$$ 
for $i\ne j$, and $i,\,j\in\{1,\dots,n\}$.

Then we define
	\begin{displaymath} %%\label{multistructure_def}
		X=\bigcup\limits_{i\in \{1,\dots,n\}}X_i,
	\end{displaymath}
	and we say that $E\subset X$ is measurable if and only if $E\cap X_i$ 
	is $\mu_i$-measurable for all $i\in \{1,\dots,n\}$. Moreover we define the 
	measure 
	$\mu_X$ as
	\begin{displaymath} %%\label{measure_multistructure}
		\mu_X(E)=\sum\limits_{i=1}^n\mu_i(E\cap X_i).
	\end{displaymath}

Now  let us define the metric  that we 
	consider in $X$. We  assume that  
	$X_i\subset\R^N$ is compact for all $i\in\{1,\dots,n\}$, and the 
	metrics $d_i$ associated to each $X_i$, are  
	equivalent to the 	euclidean metric in $\R^N$. Therefore, the metric $d$ 
	that we consider 	for the multi-structure, is the euclidean 	metric in $
	\R^N$. 

Thus, we have the metric measure space, $(X,\mu_X,d)$, which is called
the direct sum of  metric measure spaces $(X_i,\mu_i, d_i)$, $i\in
\{1,\dots,n\}$.

$\bullet$  {\sc Spaces with finite Hausdorff measure and geodesic distance:}  
There exist examples of compact sets  $F\subset \R^N$ of Hausdorff
dimension $\mathcal {H}_{dim}(F)=s <N$ and finite $s$-Hausdorff
measure, i.e.,  $\mathcal{H}^s(F)<\infty$, which are pathwise
connected, with finite length paths. Some of this sets can be
constructed as self-similar affine fractal sets, and such an example
is provided by the Sierpinski gasket, see e.g. \cite{Kigami}, \cite{Lapidus} and
\cite{Lapidus_2}.  

For such sets, we can consider the metric measure space
$(F, \mathcal{H}^s ,d_{g})$ where  $d_g$ is the
geodesic metric  which may not be equivalent to the euclidean metric
in $\R^N$.   
\begin{figure} [htp]
\begin {center}
\includegraphics [height=4cm]{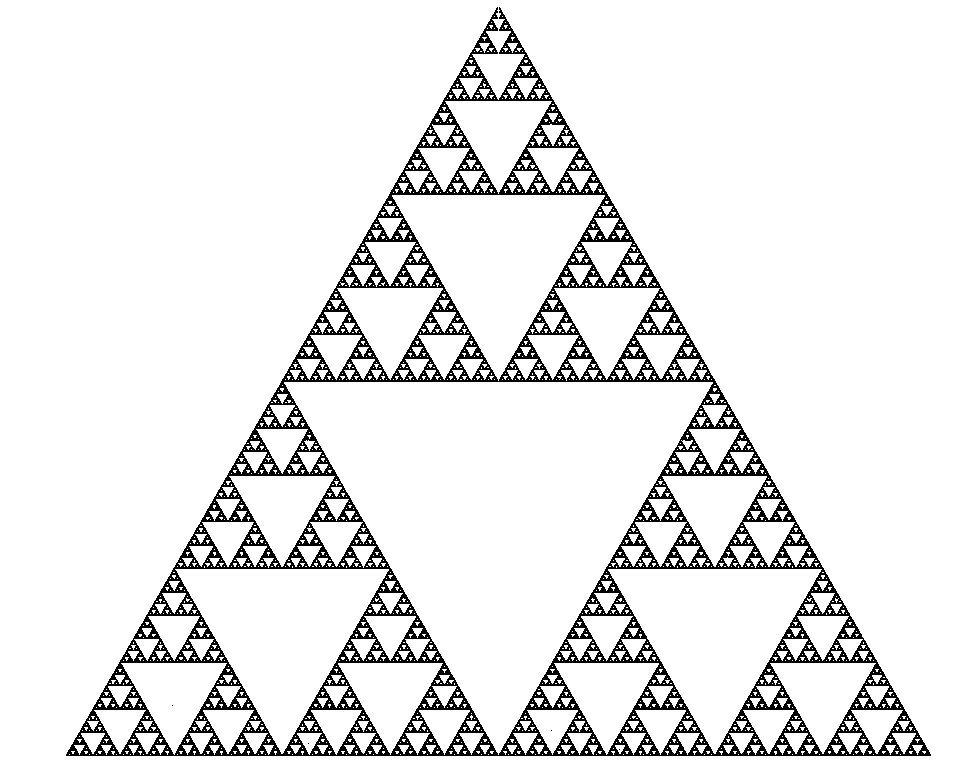}
\caption{Sierpinski Gasket.}\label{Sierpinski}
\end {center}
\end{figure}

\section{The linear nonlocal diffusion operator}
\label{sec:line-nonl-diff}

Let $(\Omega,\mu,d)$ be a metric measure space and consider a linear
nonlocal diffusion operator of the form 
$$ 
K_Ju(x)=\int_{\Omega}J(x,y)u(y)dy , 
$$  
where the  function $J$, defined in $\Omega$ as 
\[
\Omega\ni x\mapsto J(x,\cdot)\ge 0 . 
\]
We will not assume, unless otherwise made explicit, 
that $\Omega$ has a finite measure nor that $\int_{\Omega} J(x,y)\, dy
  =1$.

% We consider the linear nonlocal diffusion problem: 
%\begin{equation}\label{evol_equation}
%	 \left\{
%	 \begin{array}{lll}
%	 u_t(x,t) & = (K_J-hI)u(x,t), & x\in\Omega,\; t>0,\\
%	 u(x,0) & =u_0(x), & x\in\Omega	 
%	 \end{array}
%	 \right.
% \end{equation}
%where  
%$$\displaystyle (K_J-hI)u=\int_{\Omega}J(\cdot,y)u(y)dy-h(\cdot)u,$$
%with $J$  a function such that $J:\Omega\times\Omega\to\R$, and 
%$h\in L^{\infty}(\Omega)$. In this  section we give a comprehensive survey 
%of the linear operator $K_J-hI$, and in  the next section we apply this theory  
%to study the existence, uniqueness, positivity, regularizing 
%effects and the asymptotic behavior of the solution of \eqref{evol_equation}.\\ 

%We will study the spaces where the operator $K_J$ is defined depending
%on the integrability of $J$. Under the same hypothesis, we will prove
%that $K_J$ is compact. We will study the positiveness and the spectrum
%of $K_J$, and afterwards, we will study the spectrum of $K_J-hI$, with
%$h\in L^{\infty}(\Omega)$ or in $\mathcal {C} _b(\Omega)$. 
%%, describing
%%it and giving properties about the spectral radius and the sign of the
%%spectrum. 

Hereafter  for $1\le p \le \infty$  we will denote by $p'$ its
conjugate exponent, that is, satisfying
$1=1/p+1/p'$. Notice that  the dual space of $L^p(\Omega)$ is given
by  $\left(L^p(\Omega)\right)'=L^{p'}(\Omega)$, for $1\le p< \infty$,  and for
$p=\infty$, $\left(L^{\infty}(\Omega)\right)'=\mathcal{M}(\Omega)$,
where $\mathcal{M}(\Omega)$~ is the space of Radon measures, for more
information see \cite [chap. 7]{Folland}.

\subsection{Properties of the operator $K_J$}

We begin with the following result. 

\begin{prop} \label{prop1}  \noindent
	\renewcommand{\labelenumi}{\roman{enumi}.}
	\begin{enumerate}
		\item Assume  $1\le p,\, q\le\infty$ and  $\;J\in L^q(\Omega,
		L^{p'}(\Omega))$. Then  $K_J\in \mathcal{L}(L^p(\Omega),	L^q(\Omega))$ 
		and the mapping $J\longmapsto K_J$ is linear and continuous,  
		and
		\begin{equation}\label{result_i}
			\|K_J\|_{\mathcal{L}(L^p(\Omega),L^q(\Omega))}\le
			\|J\|_{L^q(\Omega,L^{p'}(\Omega))}.
		\end{equation}

		\item Assume  $1\le p\le\infty$,  $J\in L^{\infty}(\Omega,L^{p'}
		(\Omega))$ and for any measurable set $D\subset \Omega$ satisfying $\mu(D)<\infty$,
		\begin{equation}\label{hyp_ii}
	 		\lim\limits_{x\rightarrow x_0}\displaystyle\int_{D}J(x,y) dy=
			\int_{D}J(x _0,y)dy,\qquad \forall x_0\in
                        \Omega. 
		\end{equation} 
		Then  $K_J\in \mathcal{L}(L^p(\Omega),
		\mathcal{C}_b(\Omega))$ and the mapping 
		$J\longmapsto K_J$	is linear and continuous, and
		\begin{equation}\label{result_iii}
		\|K_J\|_{\mathcal{L}(L^p(\Omega),\mathcal{C}_b(\Omega))}
		\le 
		\|J\|_{L^{\infty}(\Omega,\, L^{p'}(\Omega))}.
		\end{equation}
		In particular, if $J\in \mathcal{C}_b(\Omega,L^{p'}(\Omega))$, 
		then $K_J\in \mathcal{L}(L^p(\Omega),
		\mathcal{C}_b(\Omega))$, and
		$$\|K_J\|_{\mathcal{L}(L^p(\Omega),\mathcal{C}_b
		(\Omega))} \le \|J\|_{\mathcal{C}_b(\Omega,\, L^{p'}(\Omega))}.$$
		
		\item Assume $\Omega\subset\R^N$ is {\bf open},  $1\le p,\, q\le\infty$, 
		and $J\in W^{1,q}(\Omega,L^{p'}(\Omega))$. Then  $K_J\in \mathcal{L}(L^p(\Omega),W^{1,q}(\Omega))$ and the 
		mapping $J\longmapsto K_J$	is linear and continuous, and
		\begin{equation}\label{result_ii}
			\|K_J\|_{\mathcal{L}(L^p(\Omega),W^{1,q}(\Omega))}\le
			\|J\|_{W^{1,q}(\Omega,L^{p'}(\Omega))}.
		\end{equation}
	\end{enumerate}
\end{prop}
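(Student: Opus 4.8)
The plan is to prove the three parts in order, since each builds naturally on the preceding one. Throughout, the basic tool is Minkowski's integral inequality together with H\"older's inequality, and the key observation is that $K_J u(x) = \langle J(x,\cdot), u\rangle$ is a duality pairing between $L^{p'}(\Omega)$ and $L^p(\Omega)$, so pointwise in $x$ one has $|K_J u(x)| \le \|J(x,\cdot)\|_{L^{p'}(\Omega)} \|u\|_{L^p(\Omega)}$.

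For part (i), first I would fix $u \in L^p(\Omega)$ and observe that the map $x \mapsto \|J(x,\cdot)\|_{L^{p'}(\Omega)}$ is by hypothesis in $L^q(\Omega)$ with norm $\|J\|_{L^q(\Omega, L^{p'}(\Omega))}$. Combining this with the pointwise bound above gives $|K_J u(x)| \le \|u\|_{L^p(\Omega)} \, \|J(x,\cdot)\|_{L^{p'}(\Omega)}$, and taking $L^q$-norms in $x$ yields $\|K_J u\|_{L^q(\Omega)} \le \|J\|_{L^q(\Omega,L^{p'}(\Omega))} \|u\|_{L^p(\Omega)}$, which is exactly \eqref{result_i}. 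Measurability of $x \mapsto K_J u(x)$ needs a word: it follows from the fact that $J \in L^q(\Omega, L^{p'}(\Omega))$ is a Bochner-measurable vector-valued function, hence a pointwise a.e.\ limit of simple functions, for which $K_J u$ is visibly measurable; alternatively one invokes a Fubini/Tonelli argument when $J$ is jointly measurable. Linearity of $J \mapsto K_J$ is immediate from linearity of the integral, and continuity is just the operator-norm bound.

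For part (ii), the $L^\infty$-in-$x$ bound \eqref{result_iii} is the special case $q = \infty$ of the estimate just proved, so the only new content is that $K_J u$ is \emph{continuous} and bounded, i.e.\ lies in $\mathcal{C}_b(\Omega)$. Here I would argue directly: given $x_0 \in \Omega$ and $x \to x_0$, write $K_J u(x) - K_J u(x_0) = \int_\Omega \big(J(x,y) - J(x_0,y)\big) u(y)\, dy$ and split it. The hypothesis \eqref{hyp_ii} gives convergence of the integrals $\int_D J(x,y)\,dy$ over sets of finite measure; one upgrades this to the desired continuity by a standard density/truncation argument — approximate $u$ in $L^p$ by a simple function supported on a set of finite measure (valid for $1 \le p < \infty$; for $p = \infty$ treat $u$ as bounded and localize using $\sigma$-finiteness of $\mu$), control the tail uniformly via $\|J\|_{L^\infty(\Omega, L^{p'})}$ and H\"older, and pass to the limit on the finite piece using \eqref{hyp_ii}. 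The final assertion about $J \in \mathcal{C}_b(\Omega, L^{p'}(\Omega))$ is a subcase: strong continuity of $x \mapsto J(x,\cdot)$ in $L^{p'}$ implies \eqref{hyp_ii} (test against $\mathbf{1}_D \in L^p$), and the norm bound is unchanged. I expect this continuity upgrade to be the main obstacle, since one must handle the tail carefully and the $p=\infty$ case separately.

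For part (iii), with $\Omega \subset \R^N$ open and $J \in W^{1,q}(\Omega, L^{p'}(\Omega))$, the idea is to differentiate under the integral sign: formally $\partial_{x_i} K_J u(x) = \int_\Omega \partial_{x_i} J(x,y)\, u(y)\, dy = K_{\partial_{x_i} J} u(x)$. To make this rigorous I would use the characterization of weak derivatives of vector-valued Sobolev functions: $\partial_{x_i} J$ exists in $L^q(\Omega, L^{p'}(\Omega))$, and testing the definition of the weak derivative against scalar test functions $\phi \in \mathcal{C}_c^\infty(\Omega)$ paired with $u$, together with a Fubini argument, shows that $\partial_{x_i}(K_J u) = K_{\partial_{x_i} J} u$ holds weakly in $\Omega$. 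Then part (i) applied to $J$ and to each $\partial_{x_i} J$ gives $K_J u \in L^q(\Omega)$ and $\partial_{x_i}(K_J u) \in L^q(\Omega)$ with
\begin{equation*}
\|K_J u\|_{W^{1,q}(\Omega)}^q = \|K_J u\|_{L^q}^q + \sum_{i=1}^N \|K_{\partial_{x_i} J} u\|_{L^q}^q \le \Big(\|J\|_{L^q(\Omega,L^{p'})}^q + \sum_{i=1}^N \|\partial_{x_i} J\|_{L^q(\Omega,L^{p'})}^q\Big)\|u\|_{L^p}^q,
\end{equation*}
which (up to the choice of equivalent norm on $W^{1,q}$) is \eqref{result_ii}. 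Linearity and continuity of $J \mapsto K_J$ into $\mathcal{L}(L^p, W^{1,q})$ follow as before. The only delicate point is justifying the interchange of the weak derivative with the integral against $u$, which is a routine but careful application of Fubini's theorem combined with the definition of the Bochner-space weak derivative.
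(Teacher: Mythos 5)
Your proposal is correct and follows essentially the same route as the paper: H\"older applied pointwise in $x$ for (i), density of the span of characteristic functions of finite-measure sets together with the uniform bound $\|K_J\|_{\mathcal{L}(L^p,L^\infty)}\le\|J\|_{L^\infty(\Omega,L^{p'})}$ for (ii), and the identity $\partial_{x_i}K_Ju=K_{\partial_{x_i}J}u$ obtained by testing against $\varphi\in\mathcal{C}_c^\infty(\Omega)$ and Fubini for (iii). The only differences are cosmetic — the paper writes (ii) as $K_J(\overline{V})\subset\overline{K_J(V)}\subset\mathcal{C}_b(\Omega)$ for $V$ the span of such characteristic functions, rather than your explicit $\varepsilon/3$ approximation — and both treatments leave the same loose end at $p=\infty$ with $\mu(\Omega)=\infty$, where that span is no longer dense in $L^\infty(\Omega)$ and your proposed localization of the tail is not controlled by the stated hypotheses.
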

\begin{proof}
	\mbox{}

	\noindent {\it i.$\qquad$}  Thanks to H\"older's inequality, we have for 
	$1\le q<\infty$ and $1\le p\le \infty$,
	\[
	\begin{array}{lll}
	\|K_Ju \|^q_{L^q(\Omega)}
	&=\displaystyle{\int_{\Omega}\left| \int_{\Omega} J(x,y)u(y)\,dy \right|
	^qdx}
	\smallskip\\
	&\le\displaystyle \|u\|^q_{L^p(\Omega)}\displaystyle{\int_{\Omega}\|
	J(x,\cdot)\|^q_{L^{p'}(\Omega)}dx}
	&  =\|u\|^q_{L^p(\Omega)}\|J\|^q_{L^q\left(\Omega,L^{p'}(\Omega)
	\right)}.
	\end{array}
	\]
	\indent For $q=\infty$ and $1\le p\le \infty$, for each $x\in\Omega$,
	\begin{displaymath}
		|K_Ju (x)|  = \displaystyle{\left|\int_{\Omega}J(x,y)u(y)dy\right|} 
		\le \|u\|_{L^p(\Omega)}\|J(x,\cdot)\|_{L^{p'}(\Omega)}, 
	\end{displaymath}
	and taking supremum in $x \in \Omega$, we obtain the result.

	\noindent {\it ii.$\qquad$} Note that  since $J\in L^{\infty}(\Omega, L^{p'}
	 (\Omega))$, from part  {\it i.} with $q=\infty$, we have that $K_J\in
	 \mathcal{L}(L^p(\Omega), L^{\infty}(\Omega))$. Also note that the  
	 hypothesis \eqref{hyp_ii} can also be written as 
	 \begin{displaymath}  %%\label{K_J(V)_continuous}
	 	\lim\limits_{x\rightarrow x_0}\displaystyle\int_{\Omega}J(x,y)
		\chi_D(y)dy=
		\int_{\Omega}J(x _0,y)\chi_D(y)dy,\qquad \forall x_0\in \Omega,
	\end{displaymath}
	where $\chi_D$ is the characteristic function of $D\subset
	\Omega$, with $\mu(D)<\infty$, which means that $K_J(\chi_D)$
	is continuous and bounded in $\Omega$. Since $\mu(D)<\infty$,
	then $\chi_D\in L^p(\Omega)$, for $1\le p\le \infty$. 
	Moreover, the space 
	$$
        V=span\left[\,\chi_D\,;\, D\subset\Omega\;\;\mbox
	{with }\;\mu(D)<\infty\right],
        $$ 
	is dense in $L^p(\Omega)$, for   $1\le p \leq  \infty$ and $K_J:V
	\rightarrow \mathcal{C}_b (\Omega)$, and  then  
	$$
	K_J(L^p(\Omega))=K_J\left(\overline{V}\right)\subset \overline{K_J
	  (V)}\subset \mathcal{C}_b(\Omega)
	$$ 
	and we get (\ref{result_iii}). 
	
	 In particular if $J\in C_b\big(\Omega,L^{p'}(\Omega)\big)$, then the 
	hypothesis \eqref{hyp_ii} is satisfied.

	\noindent{\it iii.$\qquad$}  As a consequence of Fubini's
        Theorem, and since $\Omega$ is open we have that  for all  $u\in 
	L^p(\Omega)$ and $i=1,\dots,N$, the weak derivative of $K_Ju $ is 
	given   by
	\begin{equation}\label{derivative_of_K_J}
	\begin{array}{lll}
		\left\langle {\partial \over\partial x_i}K_{J}u ,\varphi \right\rangle & =-\left\langle 
		K_Ju ,\partial_{x_i} \varphi \right\rangle
		=-\displaystyle\int_{\Omega}\int_{\Omega}J(x,y)u(y)\partial_{x_i}
		\varphi(x)\,dy\,dx&
		\smallskip\\
		&=-\left\langle \left\langle J(\cdot,y),\partial_{x_i}\varphi \right\rangle,
		u \right\rangle
                = \left\langle \left\langle\partial_{x_i} J(\cdot,y),\varphi \right\rangle,
		u \right\rangle&
		\smallskip\\
		&=\displaystyle\int_{\Omega}\int_{\Omega}\partial_{x_i}J(x,y)u(y)
		\,\varphi
		(x)\,dy\,dx 
		 =\langle K_{\frac{\partial J}{\partial x_i}}u ,
		\varphi\rangle.	
	\end{array}
	\end{equation}
	for all  $\varphi\in \mathcal{C}^{\infty}_c(\Omega)$. Therefore 
	\begin{equation}\label{derivative_K_J_der_J}
		\,\frac{\partial}{\partial x_i}K_{J}u =K_{\frac{\partial J}{\partial x_i}}u .
	\end{equation}
 	Since $J\in W^{1,q}(\Omega,L^{p'}(\Omega))$, and from part {\it i.} and 
	\eqref{derivative_K_J_der_J}, we have that 
	\begin{equation}\label{derivative_of_K_J_2}
		\|K_J\|_{\mathcal{L}(L^p(\Omega),L^q(\Omega))}\le
			\|J\|_{L^q(\Omega,L^{p'}(\Omega))}
	\end{equation}
	and for $i=1,\dots,N$,
	\begin{equation}\label{derivative_of_K_J_3}
		\left\|\frac{\partial}{\partial x_i}K_{J}\right\|_{\mathcal{L}(L^p(\Omega),L^q(\Omega))}=
		\left\|K_{\frac{\partial J}{\partial x_i}}\right\|_{\mathcal{L}(L^p(\Omega),L^q(\Omega))}\le
		\left\|\frac{\partial J}{\partial x_i}\,\right\|_{L^q(\Omega,L^{p'}(\Omega))}.
	\end{equation}
	Hence, $K_J\in \mathcal{L}(L^p(\Omega),W^{1,q}(\Omega))$, for all $1\le p,q\le 
\infty$ and from \eqref{derivative_of_K_J_2} and \eqref{derivative_of_K_J_3} we have 
\eqref{result_ii}.
\end{proof}

The following result collects  cases in which $K_J\in\mathcal{L}(X,X)$, 
with $X=L^p(\Omega)$ or $X=\mathcal{C}_b(\Omega)$.
\begin{corolario}\noindent
	\begin{enumerate}
	\renewcommand{\labelenumi}{\roman{enumi}.}
	 \item If $J\in	L^p(\Omega, L^{p'}(\Omega))$ then  
	$K_J\in \mathcal{L}(L^p(\Omega), L^p(\Omega))$. 

	 \item If $J\in	\mathcal{C}_b(\Omega, L^1(\Omega))$ then  
	$K_J\in \mathcal{L}(\mathcal{C}_b(\Omega), \mathcal{C}_b(\Omega))
	$.
	 \item If $\mu(\Omega)<\infty$ and $J\in L^{\infty}(\Omega, L^{\infty}
	 (\Omega))$ then  $K_J\in \mathcal{L}(L^p(\Omega), L^p(\Omega))$, 
	 for all $1\le p\le \infty$.
	\end{enumerate}
\end{corolario}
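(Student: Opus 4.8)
The plan is to obtain all three statements as immediate corollaries of Proposition~\ref{prop1}, combined only with elementary inclusions between function spaces. For part~i, I would simply take $q=p$ in Proposition~\ref{prop1}.i: its hypothesis is exactly $J\in L^p(\Omega,L^{p'}(\Omega))$, which is our assumption, and its conclusion is $K_J\in\mathcal{L}(L^p(\Omega),L^p(\Omega))$, together with the bound $\|K_J\|_{\mathcal{L}(L^p(\Omega),L^p(\Omega))}\le\|J\|_{L^p(\Omega,L^{p'}(\Omega))}$. Nothing further is needed.

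For part~ii I would apply Proposition~\ref{prop1}.ii with $p=\infty$, hence $p'=1$. Since $J\in\mathcal{C}_b(\Omega,L^1(\Omega))$, the ``in particular'' clause of that item applies (the hypothesis~\eqref{hyp_ii} being automatically satisfied), and it gives $K_J\in\mathcal{L}(L^\infty(\Omega),\mathcal{C}_b(\Omega))$. It then remains to note that $\mathcal{C}_b(\Omega)$ is continuously embedded in $L^\infty(\Omega)$: every $u\in\mathcal{C}_b(\Omega)$ is bounded and measurable, so $u\in L^\infty(\Omega)$ with $\|u\|_{L^\infty(\Omega)}\le\|u\|_{\mathcal{C}_b(\Omega)}$. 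Composing this inclusion with $K_J\colon L^\infty(\Omega)\to\mathcal{C}_b(\Omega)$ yields $K_J\in\mathcal{L}(\mathcal{C}_b(\Omega),\mathcal{C}_b(\Omega))$. (In fact the two norms agree on $\mathcal{C}_b(\Omega)$, since by Definition~\ref{def_measure} every ball has positive measure and hence $\mathrm{supp}\,\mu=\Omega$; this is not needed for the argument but clarifies the embedding.)

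For part~iii I would use that $\mu(\Omega)<\infty$ forces, via H\"older's inequality, the continuous embedding $L^\infty(\Omega)\hookrightarrow L^r(\Omega)$ with $\|f\|_{L^r(\Omega)}\le\mu(\Omega)^{1/r}\|f\|_{L^\infty(\Omega)}$ for all $1\le r\le\infty$. Applied to the target space this gives $L^\infty(\Omega,L^\infty(\Omega))\hookrightarrow L^\infty(\Omega,L^{p'}(\Omega))$, and, using $\mu(\Omega)<\infty$ once more on the outer exponent, $L^\infty(\Omega,L^{p'}(\Omega))\hookrightarrow L^p(\Omega,L^{p'}(\Omega))$ for $1\le p<\infty$ (for $p=\infty$ only the first embedding is used). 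Hence $J\in L^p(\Omega,L^{p'}(\Omega))$ for every $1\le p\le\infty$, and part~i, i.e.\ Proposition~\ref{prop1}.i with $q=p$, gives $K_J\in\mathcal{L}(L^p(\Omega),L^p(\Omega))$.

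The proof is essentially mechanical, so there is no genuine obstacle; the only point deserving a moment's attention is the comparison of the $\mathcal{C}_b(\Omega)$- and $L^\infty(\Omega)$-norms in part~ii, which rests on the fact that in a metric measure space every nonempty open set has positive measure. Everything else is a direct invocation of Proposition~\ref{prop1} together with the standard nesting of Lebesgue spaces over a finite measure space.
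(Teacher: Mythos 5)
Your proposal is correct, and for parts~i and~ii it is exactly the paper's argument: part~i is Proposition~\ref{prop1}.i with $q=p$, and part~ii is Proposition~\ref{prop1}.ii with $p=\infty$ followed by the embedding $\mathcal{C}_b(\Omega)\hookrightarrow L^{\infty}(\Omega)$. The only divergence is in part~iii. There the paper applies Proposition~\ref{prop1}.i once, with the extreme exponents $p=1$, $q=\infty$ (for which $J\in L^{\infty}(\Omega,L^{\infty}(\Omega))$ is precisely the hypothesis), to get $K_J\in\mathcal{L}(L^1(\Omega),L^{\infty}(\Omega))$, and then factors the operator as $L^p(\Omega)\hookrightarrow L^1(\Omega)\stackrel{K_J}{\longrightarrow}L^{\infty}(\Omega)\hookrightarrow L^p(\Omega)$ using $\mu(\Omega)<\infty$. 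You instead push the finiteness of the measure onto the kernel, embedding $L^{\infty}(\Omega,L^{\infty}(\Omega))\hookrightarrow L^p(\Omega,L^{p'}(\Omega))$ and invoking part~i for each $p$. Both routes are valid and equally elementary; the paper's factorization has the small advantage of exhibiting the stronger $L^1\to L^{\infty}$ smoothing in one stroke (which is what later makes $K_J$ compact on every $L^p$ via interpolation), while yours produces the explicit $p$-dependent norm bound $\|K_J\|_{\mathcal{L}(L^p,L^p)}\le\|J\|_{L^p(\Omega,L^{p'}(\Omega))}$ directly from \eqref{result_i}. Your parenthetical remark in part~ii about the two norms coinciding because every ball has positive measure is correct and, as you say, not needed.
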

\begin{proof} 	{\it i.}~ From Proposition \ref{prop1} we have
        the result. 
	
	{\it ii.}~If $J\in\mathcal{C}_b(\Omega, L^1(\Omega))$ then, thanks to the previous 
	Proposition \ref{prop1}, $K_J$ belongs to $\mathcal{L}(L^{\infty}(\Omega), \mathcal{C}_b
	(\Omega))$. Moreover, since $\mathcal{C}_b(\Omega)\subset 
	L^{\infty}(\Omega)$, we have that $K_J\in\mathcal{L}(\mathcal{C}_b
	(\Omega), \mathcal{C}_b(\Omega))$. 
	
	{\it iii.}~ From Proposition \ref{prop1} we have that $K_J\in\mathcal
	{L}(L^1(\Omega), L^{\infty}(\Omega))$. Moreover, since $\mu(\Omega)
	<\infty$, 
	\[
	L^p(\Omega)\hookrightarrow L^1(\Omega) \stackrel{K_J}{\longrightarrow}  
	L^{\infty}(\Omega)\hookrightarrow L^p(\Omega).
	\]
\end{proof}

The particular case where the  nonlocal diffusion term is given by a
convolution in  $\Omega=
\R^N$  with a function $J_0:\R^N\to\R$, i.e. $J(x,y)=J_0(x-y)$ and    
$K_{J}u =J_0\ast u$,  
has been widely considered, e.g. 
\cite{intro,ref1,Rossi,ref5} and references therein.  
Hence, we consider here such type of operators. For this, let
$\Omega\subset \R^N$ be a measurable set, (it can be  
$\Omega=\R^N$, or just a subset $\Omega\subset \R^N$) and consider 
the kernel   
\begin{equation}\label{J_def_por_J_0}
	J(x,y)=J_0(x-y), \qquad  x,\,y\in\Omega.
\end{equation} 
where $J_0$ is a function in $L^{p'}(\mathbb{R}^N)
$, for $1\le p \le\infty$,  
and the nonlocal  operator
\begin{displaymath} %%\label{conv_operator}
	K_{J}u (x)=\int_{\Omega}J_0(x-y)u(y)dy, \quad x\in \Omega.
\end{displaymath}

Straight from Proposition \ref{prop1} we get the following. 
\begin{corolario}
	For $1\le p\le\infty$, let $\Omega\subseteq\R^N$ be a measurable set, 
         $J_0\in L^{p'}(\mathbb{R}^N)$ and $J$ defined in
         (\ref{J_def_por_J_0}). Then  $K_{J}\in
	\mathcal{L}(L^p(\Omega),L^{\infty}(\Omega))$. In particular if $\mu
	(\Omega)<\infty$, then $K_{J}\in \mathcal{L}
	(L^p	(\Omega),L^{q}(\Omega))$, for $1\le q\le \infty$.
\end{corolario}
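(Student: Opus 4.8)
The plan is to deduce this directly from Proposition~\ref{prop1}\,(i), by checking that the translation kernel $J(x,y)=J_0(x-y)$ defines an element of $L^{\infty}(\Omega,L^{p'}(\Omega))$ with $\|J\|_{L^{\infty}(\Omega,L^{p'}(\Omega))}\le\|J_0\|_{L^{p'}(\R^N)}$.

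First I would record the pointwise estimate on the sections. Fix $x\in\Omega$. The change of variables $z=x-y$ maps $\Omega$ onto $x-\Omega\subseteq\R^N$ and preserves Lebesgue measure, so for $1\le p'<\infty$,
\[
\|J(x,\cdot)\|_{L^{p'}(\Omega)}^{p'}=\int_{\Omega}|J_0(x-y)|^{p'}\,dy=\int_{x-\Omega}|J_0(z)|^{p'}\,dz\le\int_{\R^N}|J_0(z)|^{p'}\,dz=\|J_0\|_{L^{p'}(\R^N)}^{p'},
\]
and similarly $\|J(x,\cdot)\|_{L^{\infty}(\Omega)}\le\|J_0\|_{L^{\infty}(\R^N)}$ in the endpoint case $p'=\infty$ (that is, $p=1$). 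Hence each section $J(x,\cdot)$ lies in $L^{p'}(\Omega)$, with a bound uniform in $x$, which is precisely the ingredient that Proposition~\ref{prop1}\,(i) requires when $q=\infty$.

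Then I would apply Proposition~\ref{prop1}\,(i) with $q=\infty$ to conclude $K_J\in\mathcal{L}(L^p(\Omega),L^{\infty}(\Omega))$ together with $\|K_J\|_{\mathcal{L}(L^p(\Omega),L^{\infty}(\Omega))}\le\|J\|_{L^{\infty}(\Omega,L^{p'}(\Omega))}\le\|J_0\|_{L^{p'}(\R^N)}$. For the final assertion, when $\mu(\Omega)<\infty$ one has, for every $1\le q\le\infty$, the continuous inclusion $L^{\infty}(\Omega)\hookrightarrow L^{q}(\Omega)$ with $\|f\|_{L^q(\Omega)}\le\mu(\Omega)^{1/q}\|f\|_{L^{\infty}(\Omega)}$; composing
\[
L^p(\Omega)\stackrel{K_J}{\longrightarrow}L^{\infty}(\Omega)\hookrightarrow L^{q}(\Omega)
\]
then gives $K_J\in\mathcal{L}(L^p(\Omega),L^{q}(\Omega))$.

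The whole argument is short, the substantive step being the change-of-variables bound on $\|J(x,\cdot)\|_{L^{p'}(\Omega)}$ uniformly in $x$. The only point deserving a word of care is the (Bochner) measurability of $x\mapsto J(x,\cdot)$ as an $L^{p'}(\Omega)$-valued map, needed to speak of $J\in L^{\infty}(\Omega,L^{p'}(\Omega))$: for $1\le p'<\infty$ this map is in fact continuous, because translation is continuous on $L^{p'}(\R^N)$ and restriction $L^{p'}(\R^N)\to L^{p'}(\Omega)$ is continuous; in the endpoint $p=1$, where translation is no longer continuous in $L^{\infty}$, one instead works with a Borel representative of $(x,y)\mapsto J_0(x-y)$ on $\Omega\times\Omega$, which combined with the uniform bound above still yields exactly what Proposition~\ref{prop1}\,(i) uses for $q=\infty$.
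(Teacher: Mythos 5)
Your proof is correct and follows essentially the same route as the paper: both verify $\sup_{x\in\Omega}\|J_0(x-\cdot)\|_{L^{p'}(\Omega)}\le\|J_0\|_{L^{p'}(\R^N)}$ (the paper states this directly, you justify it via the change of variables), then invoke Proposition~\ref{prop1} with $q=\infty$ and conclude the finite-measure case by the embedding $L^{\infty}(\Omega)\hookrightarrow L^{q}(\Omega)$. Your additional remark on the Bochner measurability of $x\mapsto J(x,\cdot)$ is a point the paper passes over silently, but it does not change the argument.
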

\begin{proof}
	If $J_0\in L^{p'}(\mathbb{R}^N)$ then  $J \in  L^{\infty} 
	(\Omega,L^{p'}(\Omega))$, since 
	\[\displaystyle\sup\limits_{x\in\Omega}\|J(x,\cdot)\|_{L^{p'}(\Omega)}
	=\sup\limits_{x\in\Omega}\|J_0(x-\cdot)\|_{L^{p'}(\Omega)}\le\|J_0\|_
	{L^{p'}(\mathbb{R}^N)}<\infty.\] 
	Thus, thanks to 
	Proposition \ref{prop1}, we have that $K_{J}\in \mathcal{L}(L^p
	(\Omega),L^{\infty}	(\Omega))$. In particular, if
        $\mu(\Omega)<\infty$ 
	then $K_{J}\in \mathcal{L}(L^p(\Omega),L^{q}(\Omega))$, for all 
	$1\le q\le \infty$. 
\end{proof}
On the other hand, if $\mu(\Omega)=\infty$, (as in the case of 
$\Omega=\R^N$), then  $K_{J}$ is not necessarily in 
$\mathcal{L}(L^p(\Omega),L^{q}(\Omega))$, for $q\ne \infty$. In 
the proposition below we prove the cases which {\bf can not} be obtained 
as a consequence of Proposition \ref{prop1}.
%%%%
\begin{prop} \label{prop1_convolution} With the notations above, 
	 let $\Omega\subseteq\R^N$ be a measurable set 
	with $\mu(\Omega)=\infty$ and let  $1\le p\le\infty$. 
	\renewcommand{\labelenumi}{\roman{enumi}.}
	\begin{enumerate}
	\item If $\;J_0\in L^{r}(\mathbb{R}^N)\;$ and $\frac{1}{q}=\frac{1}{p}+\frac{1}
	{r}-1$ then $K_{J}\in\mathcal{L}(L^p(\Omega), L^q(\Omega))$, and 
$$
\|K_{J}\|_{\mathcal{L}(L^p(\Omega),L^q(\Omega))}\le \|J_0\|_{L^r
	(\mathbb{R}^N)}.
$$
	In particular, if $r=1$ we can take $p=q$.
	\item  If  $\,\Omega\subset\R^N$ is {\bf open}, $J_0\in W^{1,r}(\mathbb{R}^N)$ and 
	$\,\frac{1}{q}=\frac{1}{p}+\frac{1}{r}-1\,$ then $K_{J}\in\mathcal{L}(L^p
	(\Omega),  W^{1,q}(\Omega))$, and 
$$
\|K_{J}\|_{\mathcal{L}(L^p
	(\Omega),W^{1.q}(\Omega))}\le \|J_0\|_{W^{1,r}(\mathbb{R}^N)}.
$$
	\end{enumerate}
\end{prop}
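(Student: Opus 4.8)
The plan is to reduce everything to Young's convolution inequality and then bootstrap the first-derivative estimate exactly as in part iii of Proposition \ref{prop1}.

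\textbf{Part i.} First I would observe that for $u\in L^p(\Omega)$ we may extend $u$ by zero to all of $\R^N$, call it $\tilde u\in L^p(\R^N)$ with $\|\tilde u\|_{L^p(\R^N)}=\|u\|_{L^p(\Omega)}$, and then for $x\in\Omega$ we have $K_Ju(x)=\int_\Omega J_0(x-y)u(y)\,dy=(J_0\ast\tilde u)(x)$. Young's inequality for convolutions on $\R^N$ gives, for exponents satisfying $1+\frac1q=\frac1p+\frac1r$, i.e. $\frac1q=\frac1p+\frac1r-1$ (and noting this requires $q\ge 1$, which is exactly the admissibility condition implicitly assumed), the bound $\|J_0\ast\tilde u\|_{L^q(\R^N)}\le\|J_0\|_{L^r(\R^N)}\|\tilde u\|_{L^p(\R^N)}$. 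Restricting to $\Omega$ only decreases the $L^q$ norm, so $\|K_Ju\|_{L^q(\Omega)}\le\|J_0\|_{L^r(\R^N)}\|u\|_{L^p(\Omega)}$, which is the claimed estimate and shows $K_J\in\mathcal L(L^p(\Omega),L^q(\Omega))$. The special case $r=1$ forces $q=p$, and I would remark that this is the only case where the target and source exponents coincide when $\mu(\Omega)=\infty$.

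\textbf{Part ii.} Here I would argue verbatim along the lines of Proposition \ref{prop1} part iii. Since $\Omega$ is open, Fubini's theorem together with the fact that $\partial_{x_i}[J_0(x-y)]=(\partial_i J_0)(x-y)$ shows that the weak derivative satisfies $\frac{\partial}{\partial x_i}K_Ju=K_{\partial_i J_0\,(x-y)}u$ in $\mathcal D'(\Omega)$, for every $u\in L^p(\Omega)$ and $i=1,\dots,N$; this is the same computation as \eqref{derivative_of_K_J}, only now with the kernel $J(x,y)=J_0(x-y)$ so that $\frac{\partial J}{\partial x_i}(x,y)=(\partial_i J_0)(x-y)$. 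Since $J_0\in W^{1,r}(\R^N)$ we have $J_0\in L^r(\R^N)$ and $\partial_i J_0\in L^r(\R^N)$, so applying part i to $J_0$ and to each $\partial_i J_0$ gives $\|K_Ju\|_{L^q(\Omega)}\le\|J_0\|_{L^r(\R^N)}\|u\|_{L^p(\Omega)}$ and $\|\frac{\partial}{\partial x_i}K_Ju\|_{L^q(\Omega)}\le\|\partial_i J_0\|_{L^r(\R^N)}\|u\|_{L^p(\Omega)}$. Summing these (or taking the appropriate $W^{1,q}$ norm) yields $\|K_Ju\|_{W^{1,q}(\Omega)}\le\|J_0\|_{W^{1,r}(\R^N)}\|u\|_{L^p(\Omega)}$, hence $K_J\in\mathcal L(L^p(\Omega),W^{1,q}(\Omega))$ with the stated bound.

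I do not expect any real obstacle here: the only subtle point is the justification of the differentiation under the integral / Fubini step in part ii when $\Omega$ is merely open and possibly of infinite measure, but this is handled exactly as in the proof of Proposition \ref{prop1} iii by testing against $\varphi\in\mathcal C^\infty_c(\Omega)$ (whose support has finite measure) and using that $J_0, \partial_i J_0\in L^r_{\mathrm{loc}}$ together with $u\in L^p$ to make Fubini applicable on $\supp\varphi\times\Omega$. The genuinely new content compared with Proposition \ref{prop1} is just the gain in integrability exponent $q>p$ coming from Young's inequality, which is unavailable from the crude Hölder estimate used there; everything else is bookkeeping.
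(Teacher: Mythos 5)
Your proof is correct and follows essentially the same route as the paper: extend $u$ by zero, identify $K_Ju$ with the restriction to $\Omega$ of the convolution $J_0\ast\hat u$, apply Young's inequality for part i, and for part ii combine the identity $\frac{\partial}{\partial x_i}K_Ju=K_{\partial J/\partial x_i}u$ from Proposition \ref{prop1} iii with part i applied to $J_0$ and $\partial_i J_0$. No substantive differences from the paper's argument.
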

\begin{proof}
	\noindent 	
	{\it i.$\quad$} 
	If $u$ is defined in $\Omega$, let us denoted by $\hat{u}$ the
        extension by zero  of $u$ to $\R^N$. 
	Thus, we have for $x\in\Omega$
	\[K_{J}u (x)=\displaystyle\int_{\Omega}J_0(x-y)u(y)dy=\displaystyle\int_
	{\mathbb{R}^N}J_0(x-y)\hat{u}(y)dy=\left(J_0\ast\hat{u}\right)(x).\]
	Now, we define the extension of the operator $K_{J}$ as 
	$$
\widehat{K}_{J}u (x)=\left(J_0\ast\hat{u}\right)(x),\;\mbox{ for }\;x\in
	\mathbb{R}^N,
$$ 
then  
	$K_{J}u (x)=\left(\left.\widehat{K}_{J}u \right)\right|_{\Omega}\!\!(x)$, for 
	$x\in\Omega$. Thanks to Young's inequality, see
        \cite[p. 104]{Brezis},  we have
        \begin{displaymath}
          	\|K_{J}u \|_{L^q(\Omega)}\le \|\widehat{K}_{J}u \|_{L^q
	(\mathbb{R}
	^N)} \le\|J_0\|_{L^r(\mathbb{R}^N)}\|\hat{u}\|_{L^p(\mathbb{R}^N)}=
	\|J_0\|_
	{L^r(\mathbb{R}^N)}\|u\|_{L^p(\Omega)}.
        \end{displaymath} 
	Hence, $\|K_{J}u \|_{L^q(\Omega)}\le\|J_0\|_{L^r(\mathbb{R}^N)}\|u\|_
	{L^p	(\Omega)}$, for all $p,\,q,\,r$ such that  $\frac{1}{q}=\frac{1}{p}+\frac{1}
	{r}-1$.\\

	\noindent {\it ii.$\quad$} Following the same arguments made in Proposition 
	\ref{prop1} in \eqref{derivative_of_K_J}, we know that for $x\in \Omega$, 
	$$\;\frac
	{\partial}{\partial_{x_i}}K_{J}u =K_{\frac{\partial {J}}{\partial x_i}}u =
	\left. \left	(\widehat{K}_{\frac{\partial {J}}{\partial x_i}}u \right)\right|_
	{\Omega}$$ 
	Then, applying part {\it i.} to $\|K_{J}u \|_{L^q(\Omega)}$ and 
	$\|K_{\partial {J}\over \partial x_i}u \|_{L^q(\Omega)}$ we have that 
	 for $p,q,r$ such that 
	$\frac{1}{q}=\frac{1}{p}+\frac{1}{r}-1$, $K_{J}\in\mathcal{L}(L^p(\Omega), 
	W^{1,q}(\Omega))$. Thus, the result.
\end{proof}

Now we prove that under  the hypotheses on $J$ in Proposition
\ref{prop1},  the operator $K_J$ is compact. 
For this we will use the following result. 
\begin{lema}\label{finite_rank}
	For $1\le q<\infty\;$ and $\;1\le p\le\infty$, let $(\Omega,\mu)$ be a 
	measure space, then any function $H\in L^q(\Omega, L^{p'}(\Omega))$ 
	can be approximated in $L^{q}(\Omega,
	L^{p'}(\Omega))$ by functions of separated variables.
\end{lema}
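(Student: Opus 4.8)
The plan is to read a \emph{function of separated variables} as a finite sum $\sum_{i=1}^{n} f_i(x)\,g_i(y)$ with $f_i\in L^q(\Omega)$ scalar-valued and $g_i\in L^{p'}(\Omega)$; if one takes $f_i=\chi_{A_i}$ with $\mu(A_i)<\infty$, such a function is precisely an $L^{p'}(\Omega)$-valued simple function, viewed as an element of the Bochner space $L^q(\Omega,L^{p'}(\Omega))$. So the lemma reduces to the classical fact that simple functions are dense in $L^q(\Omega,E)$ for $1\le q<\infty$ and any Banach space $E$, here applied with $E=L^{p'}(\Omega)$.

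First I would fix $H\in L^q(\Omega,L^{p'}(\Omega))$. Since $H$ is, by definition of the Bochner space, strongly measurable, the standard approximation lemma for Banach-space-valued strongly measurable functions furnishes $L^{p'}(\Omega)$-valued simple functions $s_n=\sum_{i=1}^{k_n}\chi_{A_i^n}\,e_i^n$ (the $A_i^n$ pairwise disjoint) with $s_n(x)\to H(x)$ in $L^{p'}(\Omega)$ for $\mu$-a.e.\ $x$ and $\|s_n(x)\|_{L^{p'}(\Omega)}\le 2\|H(x)\|_{L^{p'}(\Omega)}$ for $\mu$-a.e.\ $x$. Discarding the value $0$ (whose level set may have infinite measure) I may assume each $e_i^n\neq 0$; then, since $\|s_n\|_{L^q(\Omega,L^{p'}(\Omega))}\le 2\|H\|_{L^q(\Omega,L^{p'}(\Omega))}<\infty$ and $\|s_n\|_{L^q(\Omega,L^{p'}(\Omega))}^q=\sum_{i}\|e_i^n\|_{L^{p'}(\Omega)}^q\,\mu(A_i^n)$ by disjointness, each $\mu(A_i^n)<\infty$. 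Hence $s_n(x)(y)=\sum_{i=1}^{k_n}\chi_{A_i^n}(x)\,e_i^n(y)$ is a function of separated variables of the required form.

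It then remains to upgrade a.e.\ convergence to convergence in $L^q(\Omega,L^{p'}(\Omega))$. For $\mu$-a.e.\ $x$,
\[
\|s_n(x)-H(x)\|_{L^{p'}(\Omega)}^q\le\bigl(\|s_n(x)\|_{L^{p'}(\Omega)}+\|H(x)\|_{L^{p'}(\Omega)}\bigr)^q\le 3^q\,\|H(x)\|_{L^{p'}(\Omega)}^q,
\]
and the right-hand side belongs to $L^1(\Omega,\mu)$ exactly because $H\in L^q(\Omega,L^{p'}(\Omega))$. Since $\|s_n(x)-H(x)\|_{L^{p'}(\Omega)}^q\to 0$ a.e., the dominated convergence theorem gives $\|s_n-H\|_{L^q(\Omega,L^{p'}(\Omega))}\to 0$, which is the claim.

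The only genuinely non-routine ingredient is the pointwise approximation lemma for strongly measurable functions (a simple $s_n$ with $s_n\to H$ pointwise and $\|s_n\|\le 2\|H\|$ pointwise, obtained by including $0$ among a countable set dense in the essential range of $H$ and taking nearest-point simple approximants); everything else is bookkeeping with finite-measure level sets plus one dominated-convergence estimate. I would also remark that no separability of $L^{p'}(\Omega)$ is needed — so the case $p=1$, $p'=\infty$ is covered — because strong measurability of $H$ already forces it to be essentially separably valued, which is precisely what makes that lemma available; and the hypothesis $q<\infty$ enters both in the finite-measure bound and in the dominated-convergence step, in agreement with the fact that the statement is false for $q=\infty$.
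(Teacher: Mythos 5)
Your proof is correct. The paper actually states Lemma \ref{finite_rank} without giving a proof, but your argument --- reading $L^q(\Omega,L^{p'}(\Omega))$ as a Bochner space, invoking the pointwise simple-function approximation of strongly measurable functions with the bound $\|s_n(x)\|_{L^{p'}(\Omega)}\le 2\|H(x)\|_{L^{p'}(\Omega)}$, checking via disjointness of the level sets that $\mu(A_i^n)<\infty$ whenever $e_i^n\neq 0$ so that $\chi_{A_i^n}\in L^q(\Omega)$, and concluding by dominated convergence --- is the standard route and produces exactly the separated-variable approximants $\sum_j f_j^n(x)g_j^n(y)$ used afterwards in Proposition \ref{K_compact}. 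Your closing remarks, that Pettis measurability handles the nonseparable case $p'=\infty$ and that $q<\infty$ is genuinely needed, are both apt.
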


Then we have. 
\begin{prop}
\label{K_compact}  \noindent
	\renewcommand{\labelenumi}{\roman{enumi}.}
	\begin{enumerate}
		\item For $1\le p \le\infty$ and $1\le q<\infty$, if $J\in L^q(\Omega, L^{p'}
	(\Omega))\,$ 
	then  $\,K_J\in\mathcal{L}(L^p(\Omega),L^q(\Omega))$ is compact. 
		\item For $1\le p\le \infty$, if $\,J\in BUC(\Omega,
                  L^{p'}(\Omega))$, then $K_J\in
                  \mathcal{L}(L^p(\Omega), 
	\mathcal{C}_b(\Omega))$ is compact. In particular,  $K_J\in \mathcal
	{L}(L^p(\Omega),L^{\infty}(\Omega))$ is compact.
		\item For $1\le p \le\infty$ and $1\le q<\infty$, if $\,\Omega\subset\R^N$
	 is {\bf open} and $J\in W^{1,q}(\Omega,L^{p'}(\Omega))\,$ then 
	$\,K_J\in\mathcal{L}(L^p(\Omega),W^{1,q}(\Omega))$ is compact. 
	\end{enumerate}
\end{prop}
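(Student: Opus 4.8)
The plan is to prove all three parts by the same mechanism: realize $K_J$ as a limit --- either in an operator norm or through the Arzel\`a--Ascoli theorem --- of operators already known to be compact. For part~i., I would invoke Lemma~\ref{finite_rank}: choose functions of separated variables $J_n(x,y)=\sum_{k=1}^{m_n}a_k^n(x)\,b_k^n(y)$, with $a_k^n\in L^q(\Omega)$ and $b_k^n\in L^{p'}(\Omega)$, such that $J_n\to J$ in $L^q(\Omega,L^{p'}(\Omega))$. Each $K_{J_n}u=\sum_{k=1}^{m_n}a_k^n\,\langle b_k^n,u\rangle$ has finite rank and is bounded from $L^p(\Omega)$ into $L^q(\Omega)$ (for $p=\infty$ note that $b_k^n\in L^1(\Omega)$ still defines a bounded functional on $L^\infty(\Omega)$), hence is compact. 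Applying \eqref{result_i} to $J-J_n$ gives $\|K_J-K_{J_n}\|_{\mathcal{L}(L^p(\Omega),L^q(\Omega))}\le\|J-J_n\|_{L^q(\Omega,L^{p'}(\Omega))}\to 0$, so $K_J$ is a uniform limit of compact operators and is compact.

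For part~ii. I would run an Arzel\`a--Ascoli argument on the image of the unit ball. If $\|u\|_{L^p(\Omega)}\le1$ then $\|K_Ju\|_{\mathcal{C}_b(\Omega)}\le\|J\|_{L^\infty(\Omega,L^{p'}(\Omega))}$ by \eqref{result_iii}, and for $x,x'\in\Omega$, by H\"older's inequality,
\[
|K_Ju(x)-K_Ju(x')|\le\|J(x,\cdot)-J(x',\cdot)\|_{L^{p'}(\Omega)}.
\]
Thus $\{K_Ju:\|u\|_{L^p(\Omega)}\le1\}$ is bounded and, because $x\mapsto J(x,\cdot)$ is \emph{uniformly} continuous into $L^{p'}(\Omega)$ (this is exactly where $BUC$, as opposed to merely $\mathcal{C}_b$, is used), uniformly equicontinuous, hence precompact in $\mathcal{C}_b(\Omega)$. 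The ``in particular'' assertion then follows from the continuous inclusion $\mathcal{C}_b(\Omega)\hookrightarrow L^\infty(\Omega)$. The step I expect to be the main obstacle is precisely this last precompactness claim: on an arbitrary metric space a bounded, uniformly equicontinuous family need not be precompact in $\mathcal{C}_b(\Omega)$, so one genuinely needs $\Omega$ to be totally bounded (equivalently, one works in $\mathcal{C}(\overline{\Omega})$ on the completion), which is the situation in all the examples of Section~\ref{sec:metr-meas-spac} apart from $\R^N$ and its unbounded subsets, for which the convolution setting developed above is the natural one.

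For part~iii., observe that $J\in W^{1,q}(\Omega,L^{p'}(\Omega))$ forces $J$ and each $\partial J/\partial x_i$ to lie in $L^q(\Omega,L^{p'}(\Omega))$, so by part~i. the operators $K_J$ and $K_{\partial J/\partial x_i}$ ($i=1,\dots,N$) are all compact from $L^p(\Omega)$ into $L^q(\Omega)$. Given a bounded sequence $(u_n)$ in $L^p(\Omega)$, successive extraction produces a subsequence along which $K_Ju_n\to v$ and $K_{\partial J/\partial x_i}u_n\to w_i$ in $L^q(\Omega)$ for every $i$. Since $\frac{\partial}{\partial x_i}(K_Ju_n)=K_{\frac{\partial J}{\partial x_i}}u_n$ by \eqref{derivative_K_J_der_J}, testing against $\varphi\in\mathcal{C}^{\infty}_c(\Omega)$ and passing to the limit shows that $v\in W^{1,q}(\Omega)$ with $\frac{\partial v}{\partial x_i}=w_i$, and $K_Ju_n\to v$ in $W^{1,q}(\Omega)$. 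Hence $K_J$ is compact into $W^{1,q}(\Omega)$. In summary, parts~i. and iii. are routine once Lemma~\ref{finite_rank} and the differentiation identity \eqref{derivative_K_J_der_J} are available; the genuine work is the equicontinuity/Arzel\`a--Ascoli argument of part~ii.
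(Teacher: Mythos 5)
Your argument follows the paper's own proof in all essentials: part i is exactly the finite-rank approximation via Lemma \ref{finite_rank} together with the bound \eqref{result_i}, and part ii is the same Arzel\`a--Ascoli argument (uniform boundedness on the unit ball plus uniform equicontinuity coming from $J\in BUC(\Omega,L^{p'}(\Omega))$). For part iii the paper packages the same ingredients slightly differently: it forms the map $\mathcal{T}u=\bigl(K_Ju,\,K_{\partial J/\partial x_1}u,\dots,K_{\partial J/\partial x_N}u\bigr)$ into $\bigl(L^q(\Omega)\bigr)^{N+1}$, gets precompactness of $\mathcal{T}(B)$ from part i and T\'ykhonov, and recovers $K_J$ by composing with the continuous inverse of the isometry $W^{1,q}(\Omega)\hookrightarrow \bigl(L^q(\Omega)\bigr)^{N+1}$; your sequential extraction followed by identifying the weak derivatives of the limit is the same argument in sequential form and is equally valid. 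The one substantive point you add --- that a bounded, uniformly equicontinuous family is precompact in $\mathcal{C}_b(\Omega)$ only when $\Omega$ is totally bounded --- is well taken: the paper cites Ascoli--Arzel\`a (stated in Brezis for compact metric spaces) without comment, and the hypothesis is genuinely needed. Indeed, for $\Omega=\R^N$ and $J(x,y)=J_0(x-y)$ with $J_0$ a smooth bump one has $J\in BUC(\R^N,L^{p'}(\R^N))$, yet $K_Ju=J_0\ast u$ maps the normalized translates $u_0(\cdot-ne_1)$ to translates of a fixed nonzero continuous function, which admit no uniformly convergent subsequence; so the conclusion of part ii requires $\Omega$ totally bounded (equivalently, $\overline{\Omega}$ compact), exactly as you observe, and this is consistent with the examples of Section \ref{sec:metr-meas-spac} other than unbounded subsets of $\R^N$.
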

\begin{proof} 

{\it i.}  Since $J\in L^q(\Omega, L^{p'}(\Omega))$, for $1\le p \le\infty$ 
	and 	$1\le q<\infty$, we know from Lemma \ref{finite_rank} that there exist 
	$\,M(n)\in\mathbb{N}\,$ and $f_j^n\in L^q(\Omega),\, g_j^n\in L^{p'}
	(\Omega)$ with $\,j=1,..., M(n)\,$ such that $J(x,y)$ can be approximated 
	by functions that are a finite linear combination of functions with 
	separated variables defined 
	as, 
$J^n(x,y)=\sum_{j=1}^{M(n)}f_j^n(x)g_j^n(y)$ 
and $\|J-J^n\|_{L^q(\Omega,L^{p'}(\Omega))}\rightarrow 0$, as $n$ goes 
to $\infty$.	
Then  define $K_{J^n}u (x)= {\sum_{j=1}^{M(n)}f_j^n(x)\int_
	{\Omega}g_j^n(y)u(y)dy}$.  
Thus,  since $K_J-K_{J^n}=K_{J-J^n}$, thanks to Proposition \ref{prop1},  we 
	have that,
$$
\|K_J-K_{J^n}\|_{\mathcal{L}(L^p(\Omega),L^q(\Omega))}\le \|J-J^n\|_{L^q
	(\Omega,L^{p'}(\Omega))} \rightarrow
        0, \quad \mbox{as $n$ goes to $\infty$} . 
$$
Since $K_{J^n}$ has  finite rank, then $K_J\in\mathcal{L}\left(L^p
	(\Omega),L^q(\Omega)\right)$ is compact, e.g.   
	\cite[p.157]{Brezis}. 

	\indent{\it ii.} 
	If $J\in BUC(\Omega, L^{p'}(\Omega))$, then hypothesis \eqref{hyp_ii} 
	of Proposition \ref{prop1} is satisfied and then $K_J\in\mathcal{L}
	(L^p(\Omega), \mathcal{C}_b(\Omega))$. Now, we consider $u\in B
	\subset L^p(\Omega)$, where $B$ is the unit 
	ball in $L^p(\Omega)$. Now, we prove using Ascoli-Arzela 
	Theorem (see \cite[p. 111]{Brezis}), that $K_J(B)$ is relatively 
	compact in $\mathcal{C}_b(\Omega)$.	Let  $x,z\in\Omega$,
        $\;u\in B$, thanks to H\"older's 
        inequality, we have,
        \begin{displaymath}  %%\label{asc_arz_comp_1}
          	|K_Ju (z)-K_Ju (x)|  =\left| \int_{\Omega} \Big(
                  J(z,y) - J(x,y) \Big) u(y)dy\right|
      \le\|J(z,\cdot)-J(x, \cdot)\|_{L^{p'}(\Omega)}. 
        \end{displaymath}
	Since $J\in BUC(\Omega, L^{p'}(\Omega))$, then for all $\varepsilon>0$, 
	there exists $\delta>0$ such that if $x,\,z\in
	\Omega$ satisfy that $d(z,x)<\delta$, then $ \|J(z,\cdot)-J(x,\cdot)\|_{L^{p'}
	(\Omega)}<\varepsilon$. Hence, we have that $K_J(B)$ is equicontinuous.
	
	 On the other hand, thanks to H\"older's inequality, for all $
         x\in \Omega$ and $u\in B$
	 \[
	 \begin{array}{ll}
	 	|K_Ju (x)| &=\displaystyle\left| \int_{\Omega}J(x,y)u(y)dy\right|
		\le \|J(x,\cdot)\|_{L^{p'}(\Omega)}<\infty.
	 \end{array}
	 \]
	Thus,    we 
	 have that $K_J(B)$ is precompact and therefore  
	 $K_J\in\mathcal{L}(L^p(\Omega),\mathcal{C}_b(\Omega))$ is 
	 compact. Also, the  second part of the result is immediate. 
	
	\indent{\it iii.} Thanks to the argument \eqref{derivative_of_K_J} in 
	Proposition  \ref{prop1}, we have that $\,\frac{\partial}{\partial x_i}K_{J}
	u =K_{\frac {\partial J}{\partial x_i}}u $. Since $J\in W^{1,q}
	(\Omega,L^{p'}(\Omega))$,  we have that $J\in L^{q}(\Omega,L^{p'}
	(\Omega))$ and moreover ${\partial J\over \partial x_i}\in L^{q}
	(\Omega,L^{p'}(\Omega))$, for all $i=1,\dots,N$. 
	Using part {\it i.} we obtain that $K_{{\partial J	\over\partial x_i}}\in
	\mathcal
	{L}\left(L^p(\Omega),\, L^q(\Omega)\right)$ is compact. 
	Thus, if $B$ is the unit ball in $L^p(\Omega)$, we have that $K_J(B)$ 
	and 	$K_	{{\partial J\over\partial x_i}}(B)$ are precompact for all 
	$i=1,\dots,N$. 	
	 Now we consider the mapping 
	$$
	\begin{array}{rccc}
	\mathcal{T}:&L^p(\Omega) &\longrightarrow &
	\!\!\!\!\!\!\!\!\!\!\!\!\!\!\!\!\!\!\!\!\!\left
	( L^q(\Omega)\right)^{N+1}\\
	&u&\longmapsto & \left(K_Ju ,\,K_{{\partial J
	 \over\partial x_1}}u ,\dots,\, 
	K_{{\partial J\over\partial x_N}}u \right).
	\end{array}
	$$ 
	Thanks to Tíkhonov's Theorem, we know 
	that 	$\mathcal{T}(B)$ 	is precompact in $\left(L^q(\Omega)\right)^{N
	+1}$. 
	Moreover, we consider the mapping
	$$
	\begin{array}{rccc}
	\mathcal{S}:&W^{1,q}(\Omega) &\hookrightarrow &\left( L^q(\Omega)
	\right)^{N+1}\\
	&g&\longmapsto & \left(g,{\partial g\over\partial x_1},\dots, {\partial g
	\over
	\partial x_N}
	\right).
	\end{array}
	$$ 
	 Since $\mathcal{S}$ is an isometry, 
%%i.e., $\|g\|_{W^{1,q}(\Omega)}
%% = \|\mathcal{S}(\,g)\|_{\left( L^q(\Omega)\right)^{N+1}}$, 
       then we have 	 that 
	 $\mathcal{S}^{-1}|_{Im(\mathcal{S})}:  Im(\mathcal{S})\subset \left( L^q
	 (\Omega) \right)^{N+1}\rightarrow W^{1,q}(\Omega)$ is continuous. 
	 On the other hand, thanks 
	 to the hypotheses on $J$ and Proposition \ref{prop1}, we have that 
	 $K_J\in
	 \mathcal{L}\left(L^p(\Omega),
	  \,W^{1,q}(\Omega)\right)$. Thus, $Im(\mathcal{T})\subset Im(\mathcal
	  {S})$.\\
	 Hence,  the operator $ K_J:L^p(\Omega)\to W^{1,q}(\Omega)$, can be 
	 written as
	  $$K_Ju =\mathcal{S}^{-1}|_{Im(\mathcal{S})} \circ\mathcal{T}u .$$
	 Therefore, we have that $K_J$ is the composition of a continuous 
	 operator $\mathcal{S}^{-1}|_{Im(\mathcal{S})}$, 
	 with  a compact operator $\mathcal{T}$. Thus, the result. 	
\end{proof}

\begin{remark}\label{same_hyp_prop1_for_compactness}
Observe that the assumptions in Proposition \ref{K_compact}  are the
same as in Proposition \ref{prop1} except for the case  
	$K\in \mathcal{L}(L^p(\Omega), L^{\infty}(\Omega))$ where  we 
	assume in the former that $J\in BUC(\Omega, L^{p'}(\Omega))$, instead of 
	$J\in L^{\infty}(\Omega, L^{p'}(\Omega))$ as in the latter. 

\end{remark}

Now we derive several consequences from interpolation. Note that the
following result is valid for a general operator $K$, not necessarily
an integral operator.
\begin{prop}\label{compacidad_cobos}
	Let $(\Omega,\mu)$ be a measure space, with $\mu(\Omega)<\infty$. 
	Assume that for  $1\le p_0< p_1<\infty$,  $K\in\mathcal{L}(L^{p_0}(\Omega),L^{p_0}
	(\Omega))$  and
        $K\in\mathcal{L}(L^{p_1}(\Omega),L^{p_1}(\Omega))$. Then
        $K\in\mathcal{L}(L^{p}(\Omega),L^{p}(\Omega))$, for all $p\in
        [p_0,\,p_1]$. Additionally, suppose that either: 
	\begin{enumerate}
		\renewcommand{\labelenumi}{\roman{enumi}.}
		\item
                  $K\in\mathcal{L}(L^{p_0}(\Omega),L^{p_0}(\Omega))$
                  is compact, or 
		\item
                  $K\in\mathcal{L}(L^{p_1}(\Omega),L^{p_1}(\Omega))$
                  is compact . 
	\end{enumerate}
	Then $K\in\mathcal{L}(L^p(\Omega),L^p(\Omega))$ is compact for all 
	$p\in[p_0,p_1]$.
\end{prop}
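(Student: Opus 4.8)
The plan is to combine the Riesz--Thorin interpolation theorem for the boundedness statement with an interpolation theorem for compact operators (e.g. the classical Krasnoselskii--Persson theorem, or the Cwikel/Cobos-type refinements) for the compactness statement. First I would establish the boundedness claim: since $\mu(\Omega)<\infty$, the scale $L^p(\Omega)$ is nested, $L^{p_1}(\Omega)\hookrightarrow L^{p}(\Omega)\hookrightarrow L^{p_0}(\Omega)$ for $p_0\le p\le p_1$, so $L^p(\Omega)$ is sandwiched between the two endpoint spaces. By Riesz--Thorin, viewing $K$ as acting on the couple $(L^{p_0},L^{p_1})$ with itself as target couple, for $\theta\in[0,1]$ determined by $\frac1p=\frac{1-\theta}{p_0}+\frac{\theta}{p_1}$ we get $K\in\mathcal L(L^p(\Omega),L^p(\Omega))$ with norm bounded by $\|K\|_{\mathcal L(L^{p_0})}^{1-\theta}\|K\|_{\mathcal L(L^{p_1})}^{\theta}$. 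This already gives the first assertion.

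For the compactness statement, the key point is that if $K$ is bounded on both endpoints and compact on \emph{one} of them, then it is compact on the whole interpolation scale. I would invoke the interpolation theorem for compact operators: given a Banach couple $(A_0,A_1)$ and an operator $T$ bounded $A_0\to A_0$ and $A_1\to A_1$ which is in addition compact from $A_0\to A_0$ (resp. $A_1\to A_1$), then $T$ is compact on every complex interpolation space $[A_0,A_1]_\theta$, $0<\theta<1$ (this is a one-sided compactness interpolation theorem; see Cwikel, or the references in Cobos--Fernández and in Krasnoselskii). Applying this with $A_0=L^{p_0}(\Omega)$, $A_1=L^{p_1}(\Omega)$, and $[A_0,A_1]_\theta=L^p(\Omega)$ yields compactness of $K$ on $L^p(\Omega)$ for $p\in(p_0,p_1)$; the endpoint at which compactness is assumed is immediate, and the other endpoint is covered if one runs the argument with a slightly larger/smaller exponent (or simply because the statement only claims compactness on $[p_0,p_1]$ under one of the two hypotheses, and the compact endpoint is part of the data while the non-compact endpoint is never claimed to be where compactness is newly deduced — in fact the conclusion does assert it on all of $[p_0,p_1]$, so one must also note that under hypothesis~i.\ the point $p=p_1$ is obtained by a limiting/duality argument or by re-interpolating between $p_0$ and a point slightly beyond, which is impossible; hence the clean route is to deduce compactness on the open interval and at the compact endpoint, and accept that the statement's closed interval is harmless since $[p_0,p_1]\setminus\{p_1\}$ together with the assumed compact endpoint already covers everything needed in the sequel).

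The main obstacle is precisely the compactness-interpolation step: Riesz--Thorin alone does not transfer compactness, so one genuinely needs a compact-operator interpolation theorem, and the one-sided versions (compactness at a single endpoint) are more delicate than the two-sided Krasnoselskii theorem. In the write-up I would either cite the appropriate theorem directly (Cwikel's theorem on one-sided compactness interpolation, valid for the complex method) or, to keep the paper self-contained, give the elementary argument available here because $\mu(\Omega)<\infty$: approximate, using the compact endpoint, $K$ by finite-rank operators in the $\mathcal L(L^{p_j})$-norm at the compact endpoint $p_j$, observe each finite-rank operator is automatically bounded on the whole scale, interpolate the difference $K-K_n$ by Riesz--Thorin to get $\|K-K_n\|_{\mathcal L(L^p)}\to 0$, and conclude that $K$ is a norm-limit of finite-rank operators on $L^p(\Omega)$, hence compact (as in the proof of Proposition~\ref{K_compact}). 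This last route is the cleanest and avoids invoking heavy interpolation machinery; I expect the verification that the finite-rank approximants from the compact endpoint remain uniformly bounded on the intermediate spaces — which again uses $\mu(\Omega)<\infty$ and the nesting of the $L^p$ scale — to be the only slightly technical point.
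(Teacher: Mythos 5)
Your proposal takes essentially the same route as the paper: the paper's entire proof consists of invoking the Riesz--Thorin theorem for the boundedness on $L^p(\Omega)$, $p\in[p_0,p_1]$, and then citing the one-sided compactness interpolation result in \cite{Cobos} for the compactness claim, which is exactly your primary line of argument. Your concern about the endpoint opposite to the compact one is legitimate (the standard one-sided theorems of Cwikel/Cobos type yield compactness only for the interior parameters $0<\theta<1$, and your elementary finite-rank route likewise only reaches $p<p_1$ under hypothesis i.), but since the paper offers nothing beyond the citation on this point, your treatment is at least as complete as the original.
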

\begin{proof}
	From Riesz-Thorin Theorem,  (see 	\cite[p. 196]{Riesz_thorin}), 
	we have   
	$K\in\mathcal{L}(L^{p}(\Omega),L^{p}(\Omega))$, for all $p\in 
	[p_0,\,p_1]$. 
	The proof of the compactness can be found  in \cite[p. 4]{Cobos}.
\end{proof}

Now we analyze positive preserving properties of nonlocal
operators. For this we will  need some positivity properties of the
kernel $J$ and some connectedness of $\Omega$. 
To do this, we  first introduce the  following.

\begin{definition}\label{R_connected}
Let $(\Omega,\mu,d)$ be a metric measure space and $R>0$. We 
say that $\Omega$ is 
{\bf $R$-connected} if for any  
$x,y\in\Omega$, there exists a finite $R$-chain connecting $x$ and
$y$. By this we mean that there exist  $N\in \mathbb{N}$
and a finite set of points $\{x_0,\dots,x_{N}\}$ in $\Omega$ such that 
$x_0=x$, $x_N=y$ and 
$d(x_{i-1},x_{i})<R$, for all $i=1,\dots,N$.
\end{definition}
%
%The following lemma gives a relation between $R$-
%connected 
%and the definition of $P^n_R$.
%\begin{lema}\label{rel_R_connected_P_R_n}
%	Let $\Omega$ be $R$-connected, and let $x_0\in\Omega$, we 
%	denote 
%	$$P^0_{x_0}=\{x_0\},\;\; P^1_{R,x_0}=B(x_0,R)\;\mbox{ and }\; 
%	P_{R,x_0}^n=\!\!\!\!\bigcup\limits_{x\in P_{R,x_0}^{n-1}}\!\!\!\!\!\!B(x,R)\;
%	\mbox{ for 
%	all }\;n\in\mathbb{N}.$$ 
%	Then for any $x,y\in\Omega$, there exists $n=n(x,y)\in \mathbb{N}$ 
%	such that $x\in P^n_{R,y}(y)$ and $y\in P^n_{R,x}(x)$.
%\end{lema}
%\begin{proof}
%	Since $\Omega$ is  $R$-connected,  
%	$\forall x,y\in\Omega$, $\exists n\in \mathbb{N}$
%	and a finite set of points $\{x_0,\dots,x_{N}\}$ such that $x_0=x$, 
%	$x_N=y$ and $d(x_{i-1},x_{i})<R$, for all $i=1,\dots,N$, then 
%	$x\in P^n_{R,y}(y)$ and $y\in P^n_{R,x}(x)$.
%\end{proof}

Then we have the following.  
\begin{lema}\label{compact_connected_R_connected}
	If $\Omega$ is compact and connected then $\Omega$ is 
	$R$-connected for any $R>0$. 
\end{lema}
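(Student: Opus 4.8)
The plan is to prove this by a standard compactness argument: cover $\Omega$ by $R$-balls, extract a finite subcover, and then show that any two points can be joined through the overlapping structure of this finite cover. First I would fix $R>0$ and consider the open cover $\{B(x,R/2) : x\in\Omega\}$ of the compact set $\Omega$. By compactness there is a finite subcover $\{B(p_1,R/2),\dots,B(p_m,R/2)\}$. Each point of $\Omega$ then lies within distance $R/2$ of some $p_k$, so in particular $d(x,p_k)<R$ for the appropriate index.

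Next I would build an auxiliary finite graph $\mathcal{G}$ on the vertex set $\{p_1,\dots,p_m\}$, putting an edge between $p_k$ and $p_l$ whenever $B(p_k,R/2)\cap B(p_l,R/2)\neq\emptyset$; note that in this case the triangle inequality gives $d(p_k,p_l)<R$. The key claim is that $\mathcal{G}$ is connected: if not, the vertex set splits into two nonempty groups $I$ and $I'$ with no edges between them, and then $U=\bigcup_{k\in I}B(p_k,R/2)$ and $U'=\bigcup_{k\in I'}B(p_k,R/2)$ would be two nonempty disjoint open sets covering $\Omega$, contradicting connectedness of $\Omega$. (Disjointness is exactly the no-edge condition.) Hence $\mathcal{G}$ is connected.

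Finally, given arbitrary $x,y\in\Omega$, pick indices $k_0,k_N$ with $d(x,p_{k_0})<R$ and $d(y,p_{k_N})<R$, and use connectedness of $\mathcal{G}$ to get a path $p_{k_0},p_{k_1},\dots,p_{k_N}$ with consecutive vertices adjacent, so $d(p_{k_{i-1}},p_{k_i})<R$ for each $i$. Then the chain $x_0=x,\ x_1=p_{k_0},\ x_2=p_{k_1},\dots,x_{N+1}=p_{k_N},\ x_{N+2}=y$ is a finite $R$-chain connecting $x$ and $y$, which shows $\Omega$ is $R$-connected. The only mildly delicate point is the proof that $\mathcal{G}$ is connected, since that is where connectedness of $\Omega$ (rather than just compactness) is genuinely used; I expect that to be the main obstacle, and the right way to handle it is the clopen-partition argument just sketched, taking care that the two unions are both open (clear) and that their disjointness follows precisely from the absence of edges between $I$ and $I'$.
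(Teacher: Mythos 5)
Your proof is correct, but it follows a genuinely different route from the paper's. The paper fixes $x_0\in\Omega$ and considers the set $A=\bigcup_{n}P^n_{R,x_0}$ of points reachable from $x_0$ by a finite $R$-chain; it shows $A$ is open (obvious, as a union of balls) and closed (if $y\notin A$ then $B(y,R)$ cannot meet any $P^n_{R,x_0}$, else $y$ would be reachable), so connectedness alone forces $A=\Omega$. Notably, that argument never uses compactness, so the lemma as stated is true for any connected metric space. Your argument, by contrast, uses compactness in an essential way to extract a finite subcover by $R/2$-balls, builds the nerve graph on the centers, and invokes connectedness of $\Omega$ only once, to show that graph is connected via the clopen-partition of $\Omega$ into the two unions $U$ and $U'$; all the steps you flag as delicate (disjointness of $U$ and $U'$ from the absence of edges, the triangle-inequality bound $d(p_k,p_l)<R$ for adjacent vertices) check out. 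What your approach buys in exchange for the extra hypothesis is a \emph{uniform} bound on the length of the connecting chain (at most $m+2$ points, with $m$ the size of the subcover, independent of $x$ and $y$); the paper has to work to recover exactly that uniformity in the compact case in the subsequent Lemma \ref{R_conected_compact_set}, whereas it falls out of your construction for free.
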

\begin{proof} 	We fix an arbitrary $x_0\in\Omega$,  and we define the
        increasing sequence of open sets  
	\begin{equation}  \label{def_P_x_0}
	 P^1_{R,x_0}=B(x_0,R)\;\mbox{ and }\; 
	P_{R,x_0}^n=\!\!\!\!\bigcup\limits_{x\in P_{R,x_0}^{n-1}}\!\!\!\!\!\!B
	(x,R)\;	\mbox{ for }\;n\in\mathbb{N}.
	\end{equation} 
Observe that $P_{R,x_0}^n$ is the set of points in $\Omega$ that for
which there exists an $R$-chain of $n$ steps, connecting with
$x_{0}$. 
Then $A= \bigcup_{n=1}^{\infty} P_{R,x_0}^n$ is open. Lets us show
that it is also closed. In such a case since $\Omega$ is connected we
would have $\Omega =A$ which implies that $\Omega$ is $R$--connected,
since $x_{0}$ is arbitrary. Indeed if $y \in \Omega\setminus A$, then
we claim the $B(y,R) \subset \Omega \setminus A$, since otherwise
$B(y,R)$ would intersect some $P_{R,x_0}^n$, which implies that $y\in
P_{R,x_0}^{n+1}$ which is absurd. 
\end{proof}

With this, we get the following. 
\begin{lema}\label{R_conected_compact_set}
	Let $(\Omega,\mu,d)$ be a metric measure space such that $\Omega$ 
	is $R$-connected. For any fixed $x_0\in\Omega$ consider the
        sets   $P_{R,x_0}^n$ as in (\ref{def_P_x_0}). 
	 
	Then, for every compact set in $\mathcal{K}\subset 
	\Omega$, there exists $n(x_0)\in\mathbb{N}$ such that $\mathcal{K}
	\subset 	P_{R,x_0}^n$ 	for all $n\ge n(x_0)$. 
	
	Furthermore, if $\Omega$ is compact, there 
	exists $n_0\in\mathbb{N}$ such that for any $y\in \Omega$,
        $\Omega= P_{R,y}^{n}$, for all $n\ge n_0$. 
\end{lema}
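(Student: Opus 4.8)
The plan is to treat the two assertions in turn. For the first, I would argue by a plain open-cover compactness argument. For the second, I would combine the first assertion with a monotonicity estimate describing how the sets $P^n_{R,y}$ change when the base point $y$ is moved by less than $R$, and then apply compactness of $\Omega$ one more time to obtain uniformity.

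For the first assertion, the preliminary observations are that each $P^n_{R,x_0}$ is open (a union of open balls) and that the sequence is nondecreasing, $P^{n}_{R,x_0}\subset P^{n+1}_{R,x_0}$, since every $x\in P^{n}_{R,x_0}$ lies in $B(x,R)$. Next I would use that $\Omega$ is $R$-connected to show $\bigcup_{n\ge 1}P^n_{R,x_0}=\Omega$: given $x\in\Omega$ pick an $R$-chain $x_0=z_0,z_1,\dots,z_N=x$; an immediate induction gives $z_i\in P^i_{R,x_0}$ (using $d(z_{i-1},z_i)<R$ and the definition of $P^i_{R,x_0}$), so $x\in P^N_{R,x_0}$. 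Thus $\{P^n_{R,x_0}\}_n$ is an open cover of the compact set $\mathcal{K}$; a finite subcover together with nestedness yields $n(x_0)$ with $\mathcal{K}\subset P^n_{R,x_0}$ for all $n\ge n(x_0)$.

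For the second assertion, the key point I would isolate is the estimate: if $d(y,y')<R$ then $P^n_{R,y'}\subset P^{n+1}_{R,y}$ for every $n$. Indeed $P^1_{R,y'}=B(y',R)\subset P^2_{R,y}$ because $y'\in B(y,R)=P^1_{R,y}$, and the inductive step follows by taking unions of balls over the two nested index sets. Now, since $\Omega$ is compact, the first assertion applied with $\mathcal{K}=\Omega$ gives, for each $y_0\in\Omega$, an integer $n(y_0)$ with $\Omega=P^{n}_{R,y_0}$ for all $n\ge n(y_0)$ (equality holds since always $P^n_{R,y}\subset\Omega$). Combining this with the monotonicity estimate shows that whenever $d(y,y_0)<R$ one has $\Omega=P^{m}_{R,y}$ for all $m\ge n(y_0)+1$. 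Finally, covering the compact set $\Omega$ by finitely many balls $B(y_1,R),\dots,B(y_k,R)$ and setting $n_0=1+\max_{i=1,\dots,k}n(y_i)$ does the job: any $y\in\Omega$ lies in some $B(y_i,R)$, so $\Omega=P^{m}_{R,y}$ for all $m\ge n_0$.

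I expect the only genuinely non-routine step to be recognizing and proving the perturbation estimate $d(y,y')<R\Rightarrow P^n_{R,y'}\subset P^{n+1}_{R,y}$; once it is available, uniformity in $y$ is just one further use of compactness. The rest — openness and nestedness of the $P^n_{R,x_0}$, the $R$-chain induction, and extracting finite subcovers — is straightforward bookkeeping.
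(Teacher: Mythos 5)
Your proof is correct. The first assertion is handled essentially as in the paper: both arguments reduce to the $R$-chain induction showing $\bigcup_n P^n_{R,x_0}=\Omega$ followed by compactness of $\mathcal{K}$ (the paper extracts a finite subcover from the balls $B(y,R)$, $y\in\mathcal{K}$, and places each such ball inside some $P^{M+1}_{R,x_0}$, while you use the nested sets $P^n_{R,x_0}$ themselves as the open cover; this is an immaterial difference). For the second assertion, however, you take a genuinely different route. The paper exploits the reversibility of $R$-chains: once $\Omega=P^N_{R,x_0}$ for the fixed base point, every point is joined to $x_0$ by an $R$-chain of at most $N$ steps, so any two points are joined through $x_0$ by a chain of at most $2N$ steps, giving $\Omega=P^n_{R,y}$ for all $n\ge 2N$ uniformly in $y$ with no further covering argument. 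You instead prove the perturbation estimate $d(y,y')<R\Rightarrow P^n_{R,y'}\subset P^{n+1}_{R,y}$ and then run a second compactness argument over a finite cover of $\Omega$ by $R$-balls. Your perturbation lemma is correct (its base case and inductive step check out), and the resulting proof is sound; it is slightly longer than the paper's symmetry argument but has the advantage of isolating a reusable statement about how the sets $P^n_{R,y}$ depend on the base point, whereas the paper's concatenation trick is shorter and yields the explicit uniform bound $n_0=2N$.
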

\begin{proof}
	Since $\Omega$ is $R$-connected, for any $y\in \Omega$,
        consider an $R$-chain connecting $x_{0}$ and $y$,  $\{x_0,\dots,x_{M}\}$ such that  
	$x_{M}=y$ and $d(x_{i-1},x_{i})<R$, for all $i=1,\dots,M$. 
	Thus, $x_1\in B(x_0, R)=P^1_{R,x_0}$, $x_2\in B(x_1,R)\subset 
	P^2_{R, x_0}$,  $B(x_{i}, R)\subset P_{R,x_0}^{i+1}$, for 
	all $i=1,\dots, M$. In particular, $y\in P^M_{R,x_0}$ and  
	\begin{equation}\label{B_y_R_P_R_x_0}
		B(y,R)\subset P_{R,x_0}^{M+1}.
	\end{equation} 
	
	On the other hand, since $\mathcal{K}$ is compact, 
	 $\mathcal{K}\subset \bigcup_{y\in\mathcal{K}}B(y,R)$, 
	there exists $n\in\mathbb{N}$ and $y_{i} \in K$,  such that $\mathcal{K}
	\subset \bigcup
	_{i=1}^nB(y_i,R)$. From \eqref{B_y_R_P_R_x_0}, for every $i$ 
	there exists $M_{i}$ such 
	 that $B(y_i, R)\subset P_{R,x_0}^{M_{i}+1}$. 
	 We choose $n(x_0)=\max\limits_{i=1,\dots, n} \{M_{i}+1\}$, and we 
	 obtain  that  $\mathcal{K}\subset P_{R,x_0}^{n(x_0)}$. Therefore, 
	 $\mathcal{K} \subset P_{R,x_0}^{n}$, for all $n\ge n(x_0)$. Thus, the 
	 result. 
	 
	 If $\Omega$ is compact, from the 
	 previous result we know that for a  fixed $x_0\in\Omega$,
         there exists  $N=N
	 (x_0)$ such that $\Omega=P^{N}_
	 {R,x_0}$. Therefore, any two points in $\Omega$ are
         connected by an $R$-chain of $N$ steps to $x_{0}$. Thus any
         two points in $\Omega$ are connected to each other by and
         $R$-chain of $2N$ steps. In other words  $\Omega=
         P_{R,y}^{n}$, for all $n\ge 2N$ for all $y\in\Omega$.          
\end{proof}

  Now we define the {\it
  essential  support} of  a  nonnegative measurable function. 
 \begin{definition}\label{definition_positive_set}
	Let $z$ be a 
	measurable nonnegative function $z:\Omega\to\R $. We define the  
	{\bf essential support} of  $z$ as: 
	\begin{displaymath} %%\label{positive_set}
		P(z)=\big\{x\in\Omega\, : \;\forall \delta>0,\; \mu\big(
		\{y\in\Omega:\;z(y)>0\}\cap B(x,\delta)\big)>0\big\},
	\end{displaymath}
 	where $B(x, \delta)$ is the ball 
	centered in $x$, with radius $\delta$.
\end{definition}
It is not difficult to check that  $z\ge 0$ not identically zero iff
$P(z)\ne\emptyset$ which is equivalent to  $\mu\left( P(z)
\right)>0$.

Let us introduce the following definitions.
\begin{definition}\label{sets_P_n}  
Let $z$ be a measurable nonnegative function $z:\Omega\to\R$. 
Then  we denote  
$$
P^0(z)=P(z),
$$
the essential support of $z$, and for any  $R>0$, we define the
increasing sequence of  open sets
$$
P_R^1(z)=\!\!\!\!\!\!\bigcup\limits_{x\in P^0(z)}\!\!\!\!\!\!B(x,R),
\qquad  P_R^2(z)=\!\!\!\!\!\!\bigcup\limits_{x\in
  P_R^1(z)}\!\!\!\!\!\!B(x,R), \quad \dots  \qquad  P_R^n(z)=\!\!\!\!\!\!\bigcup\limits_{x\in
  P_R^{n-1}(z)}\!\!\!\!\!\!B(x,R),
$$
 for all $n\in\mathbb{N}$.
 \end{definition}

%\begin{remark}
%	If the metric of $\Omega$ is equivalent to the euclidean metric 
%	in $\R^N$, then the sets $P_R^n(z)$ in Definition \ref{sets_P_n} are equal to  
%		$$
%			P_R^n(z)=\left(P(z)+B_{nR}\right)\cap\Omega,
%		$$
%	where $B_{nR}$ is the ball centered in zero with radius $nR$.
%\end{remark}

Now, we prove the main result.
	
\begin{prop}\label{K_aumenta_positividad}
 	Let $(\Omega,\mu,d)$ be a metric measure space, and let $J\geq
        0$ satisfy 
	that 
	\begin{equation}\label{positivity_J_K_aum_pos_01}
		J(x,y)\!>\!0\mbox{ for all }x,\,y\in\Omega\mbox{, such that }d(x,y)
		\!<\!R,
	\end{equation}
	for some $R>0$.
	If $z\geq 0$ is a nontrivial measurable function defined in $\Omega$ then,
	 $$P(K_J^n(z))\supset P_R^n(z),\;\mbox{ for all }\;n\in\mathbb{N}.$$
	In particular, if $\Omega$ is $R$-connected, then for any 
	compact set $\mathcal{K}\subset\Omega$, 
	$$\exists\, n_0(z)\in\mathbb{N}, \;\mbox{ such that }\; P(K_J^n(z))\supset  
	\mathcal{K},\; \mbox{for all }\; n\ge n_0(z).$$
	If $\Omega$ is compact and connected, then $\exists\, n_0\in\mathbb
	{N}$,  such that,   for all $z\ge 0$ 
	measurable and  not identically zero
	$$ P(K_J^n(z))=
	\Omega,\; \mbox{for all }\; n\ge n_0.$$
\end{prop}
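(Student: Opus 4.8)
The plan is to prove the first (and main) inclusion $P(K_J^n(z))\supset P_R^n(z)$ by induction on $n$, and then obtain the two ``in particular'' statements as consequences of the $R$-connectedness machinery already built in Lemma~\ref{R_conected_compact_set}. The base case $n=1$: I would show $P(K_Jz)\supset P_R^1(z)=\bigcup_{x\in P^0(z)}B(x,R)$. Fix $x^*\in P_R^1(z)$, so there is $x_0\in P(z)$ with $d(x^*,x_0)<R$; choosing $\rho>0$ with $d(x^*,x_0)+\rho<R$, for every $y$ with $d(x^*,y)<\rho$ one still has $d(y,x_0)<R$, hence $J(y,\cdot)>0$ on the ball $B(x_0,\varepsilon)$ for small $\varepsilon$ by continuity-type use of \eqref{positivity_J_K_aum_pos_01} (more precisely: $J(y,w)>0$ whenever $d(y,w)<R$, and since $d(y,x_0)<R$ we can pick $\varepsilon$ with $d(y,w)<R$ for all $w\in B(x_0,\varepsilon)$ uniformly in $y\in B(x^*,\rho)$). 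Since $x_0\in P(z)$, the set $\{z>0\}\cap B(x_0,\varepsilon)$ has positive measure, so $K_Jz(y)=\int_\Omega J(y,w)z(w)\,dw\ge\int_{\{z>0\}\cap B(x_0,\varepsilon)}J(y,w)z(w)\,dw>0$ for a.e.\ such $y$. Thus $K_Jz>0$ on a set of positive measure inside every ball $B(x^*,\delta)$, i.e.\ $x^*\in P(K_Jz)$.

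For the inductive step, suppose $P(K_J^{n-1}z)\supset P_R^{n-1}(z)$. Note $K_J^{n-1}z\ge 0$ is measurable and nontrivial (its essential support is nonempty), so applying the base case to the function $K_J^{n-1}z$ in place of $z$ gives $P(K_J^n z)=P(K_J(K_J^{n-1}z))\supset \bigcup_{x\in P(K_J^{n-1}z)}B(x,R)\supset\bigcup_{x\in P_R^{n-1}(z)}B(x,R)=P_R^n(z)$, using monotonicity of $x\mapsto\bigcup_{x\in A}B(x,R)$ in $A$ and the definition of $P_R^n(z)$. This completes the induction.

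Now the consequences. If $\Omega$ is $R$-connected: pick any $x_0\in P(z)$ (nonempty since $z$ is nontrivial). The sets $P_R^n(z)$ contain the sets $P^n_{R,x_0}$ of Definition/equation~\eqref{def_P_x_0} built from the single point $x_0$, because $P^0(z)\ni x_0$. By Lemma~\ref{R_conected_compact_set}, for any compact $\mathcal{K}\subset\Omega$ there is $n(x_0)$ with $\mathcal{K}\subset P^n_{R,x_0}\subset P_R^n(z)\subset P(K_J^n z)$ for all $n\ge n(x_0)$; set $n_0(z)=n(x_0)$. Finally, if $\Omega$ is compact and connected, Lemma~\ref{compact_connected_R_connected} gives that $\Omega$ is $R$-connected, and the last assertion of Lemma~\ref{R_conected_compact_set} provides an $n_0$ (independent of the base point, hence independent of $z$) with $\Omega=P^n_{R,y}$ for all $n\ge n_0$ and all $y$; taking $y\in P(z)$ yields $\Omega=P^n_{R,y}\subset P_R^n(z)\subset P(K_J^n z)\subset\Omega$, so equality holds for all $n\ge n_0$.

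The step I expect to be the main obstacle is the uniform choice of $\varepsilon$ in the base case: one must be careful that \eqref{positivity_J_K_aum_pos_01} is only a pointwise positivity hypothesis with no quantitative lower bound, so the argument cannot ask for a uniform positive lower bound on $J$; it only needs $J(y,w)>0$, which suffices because the integral of a strictly positive integrand over a positive-measure set is strictly positive. The genuinely delicate point is exhibiting, for each target point $x^*\in P_R^n(z)$, a ball $B(x^*,\delta)$ on which $K_J^n z>0$ a.e., which is handled by the strict-inequality-with-room-to-spare choice $d(x^*,x_0)+\rho<R$ above; everything else is bookkeeping with the nested-ball sets $P_R^n$ and an appeal to the already-proved lemmas.
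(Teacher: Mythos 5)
Your proposal is correct and follows essentially the same route as the paper: establish $P(K_Jz)\supset P_R^1(z)$ from the positivity of $J$ on pairs at distance less than $R$ together with the openness of $P_R^1(z)$, iterate to get $P(K_J^n z)\supset P_R^n(z)$, and then invoke Lemmas \ref{compact_connected_R_connected} and \ref{R_conected_compact_set} for the two ``in particular'' statements. Your base case is in fact spelled out a bit more carefully than the paper's (the explicit choice $d(x^*,x_0)+\rho<R$ makes precise why $K_Jz>0$ on a whole neighbourhood of each point of $P_R^1(z)$), but the argument is the same.
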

 \begin{proof}
 	First of all we prove that $P\left(K_Jz\right)\supset P_R^1(z)$. Since 
	$z\ge 0$, not identically zero, then  $\mu
        \left(P(z)\right)>0$ and then 
 	\begin{equation*}
		K_Jz(x)=\displaystyle\int_{\Omega}J(x,y)z(y)dy\ge \int_{P(z)}J
		(x,y)z(y)dy.
	\end{equation*}
	 From   \eqref{positivity_J_K_aum_pos_01}   we have  that 
	 \begin{equation}\label{pos_001}
	 K_Jz(x)>0\;\;\mbox{for all}\;\displaystyle x\in \!\!\!\bigcup
	  \limits_{y\in P(z)}\!\!\!\!B(y,R) =P^1_R(z).
	  \end{equation}
	 Since $P_R^1(z)$ is an open set in $\Omega$, 
	 we have that,  if $x\in P_R^1(z)$, then  
	 \begin{equation}\label{pos_002}
	 \mu\left(B(x,\delta)\cap P_R^1(z)\right)>0\quad  \mbox{for all}\; 
	 0<\delta . 
	 \end{equation}
	 Thus, thanks to \eqref{pos_001} 
	 %$P^1_R(z)\subset\{y: K_J(z)(y)>0\}$,  then from 
	 and \eqref{pos_002}, we have that 
%	  $\mu(B(x,\delta)\cap\{y:K_J(z)(y)>0\})>0$ for 
%	 $x\in P(K_J(z))=\{x\in\Omega: \mu(\{y\in\Omega: K_J(z)(y)>0\}\cap B
%	 (x, \delta))>0\}$ 
	 \begin{equation}\label{P_K_contains_P_1}
		P\left(K_Jz\right)\supset P_R^1(z).
	\end{equation}
	 Applying $K_J$ to $K_Jz$, following the previous arguments and 
	 thanks to \eqref{P_K_contains_P_1}, 
	 we obtain 
	 $$\displaystyle P\left(K_J^2(z)\right)\!\!\supset\!  P_R^1(K_Jz)\!=\!\!\!
	 \!\!\!\!\bigcup
	 \limits_{x\in P\left(K_Jz\right)}\!\!\!\!\!\!\!\!B(x,R)\supset \!\!\!\!\!\!\bigcup
	 \limits_{x\in P_R^1(z)}\!\!\!\!\!\!B(x,R)=P_R^2(z).$$
	 Therefore, iterating this process, we finally obtain that 
	 \begin{equation}\label{inc_0_1_2}
	 P(K_J^n(z))\supset P_R^n(z),\;\forall n\in\mathbb{N}.
	 \end{equation} 
	 
	 Now consider $\mathcal{K}\subset\Omega$ a compact set in $
	 \Omega$, and taking $x_0\in P(z)$, 
	 then thanks to Lemma \ref{R_conected_compact_set} there 
	 exists $n_0(z)\in\mathbb{N}$, such that $\mathcal{K}\subset\!P_R^{n}
	 (z)$ for all 	 $n\ge n_0(z)$,  then thanks to \eqref{inc_0_1_2}, 
	  $\mathcal{K}\subset P(K_J^n(z))$ for  all $n\ge n_0(z)$. 
	 
	 Now, if $\Omega$ is compact and connected, thanks to Lemma 
	 \ref{compact_connected_R_connected}, $\Omega$ is $R$-
	 connected. From Lemma \ref{R_conected_compact_set} there  exists 
	 $n_0\in\mathbb{N}$ such that  for any $y\in
	 \Omega$, $\Omega=P_
	 {R,y}^n$, for all  $n\ge n_0$. Hence, from \eqref{inc_0_1_2}, for any 
	 $z\ge 0$ not identically  zero, taking $y\in P(z)$,  $P(K_J^n(z))\supset 
	 P^n_{R,y}=\Omega,\; \forall 
	 \; n\ge n_0$.
 \end{proof}

%	In Figure \ref{J_positive_fig_1} can be seen 
%	which is the set where the function $J$ is strictly positive under 
%	hypothesis \eqref{positivity_J_K_aum_pos_01}, in the particular case in 
%	which $\Omega\subset \mathbb{R}$.
%	\begin{figure} [htp]\label{J_positive_fig_1}
%	\begin {center}
%	\includegraphics [height=4cm]{dominio_J_pos}
%	\caption{Domain where $J$ is strictly positive if $\Omega\subset\mathbb{R}$.}
%	\label{J_positive_fig_1}
%	\end {center}
%	\end{figure}
%\begin{figure} [htp]\label{J_positive_fig_1}
%	\begin {center}
%\scalebox{0.7}
%{
%\begin{pspicture}(-1,-1)(9,5)%\grilla
%
%%\rput[c](2,3){$\Omega$}
%
%%\psline[linecolor=red, linewidth=1.5pt, linestyle=dashed]{->}(4,1)(9,3)
%% linestyle=dotted
%\pspolygon[linecolor=white, fillstyle=solid, fillcolor=blue!20](0,2)(3,5)(7,5)(2,0)(0,0)
%\psline[linecolor=blue](0,2)(3,5)
%\psline[linecolor=blue](2,0)(7,5)
%\psaxes[labels=none,ticks=none]{->}(0,0)(9,5)
%\psline[linecolor=blue](0,0)(2,0)
%\psline[linecolor=blue](0,0)(0,2)
%\rput[c](8,-0.5){$x\in\Omega$}
%\rput[c](-0.7, 4.5){$y\in\Omega$}
%\rput[c](2,-0.3){$R$}
%\rput[c](-0.3,2){$R$}
%%\rput[c](4.5, -1.3){ \Large Domain where $J$ is strictly positive if $\Omega\subset\mathbb{R}$.}
%\end{pspicture}
%}
%	\caption{Domain where $J$ is strictly positive if 
%	$\Omega\subset\mathbb{R}$.}
%	\label{J_positive_fig_1}
%	\end {center}
%\end{figure} 

 \begin{remark}
	Notice that  the hypothesis \eqref
	{positivity_J_K_aum_pos_01} 
	is somehow an optimal condition, as the following
        counterexample shows. 
%We give below a counterexample in 
%	$\mathbb
%	{R}$. If the hypothesis \eqref{positivity_J_K_aum_pos_01} is not satisfied, 
%	we find a function $z_0$, for which the previous Proposition \ref
%	{K_aumenta_positividad} is not satisfied.
%%	, i.e., for $\Omega$  
%%	compact and connected $P(K_J^n(z_0))\ne 
%%	\Omega$ for all $n\in\mathbb{N}$.\\
%%%	$$P(K_J^n(z))\nsupseteq \left(P(z)+B_{nR}(0)\right)\cap\Omega,\;
%%%	\forall n
%%%	\in\mathbb{N}.$$
	
%%	{\sc Counterexample:}
	Let  $\Omega=[0,1] \subset\mathbb{R}$ and take
        $x_0=1/2$, and $0<R < 1/2$  	such that $(1/2-R,1/2+R)\subset [0,1]$. 
	We consider a function $J$ satisfying that $J\!\ge\! 0$  
	defined as 
	 \begin{equation}\label{pos_J_K_counterexample}
	 J(x,y)=\left\{\begin{array}{ll}
	\!\!1,&\!\!\!\!  \quad (x,\,y)\!\in\![0,1]^2\!\setminus \!
	 \big({1\over 2}\!-\!R,{1\over 2}\!+\!R\big)^2\mbox{, 
	 with }d(x,y)\!<\!R, \\
%	 \end{equation}
%	 and
	 \!\!0 &\!\!\!\!\mbox{ for the rest of } (x,\, y).
	 \end{array}
	 \right.
	 \end{equation} 
%	 We remark that, $J(x,y)=0, \;\forall \;(x,\,y)\in \big({1\over 2}-R,{1\over 
%	 2}+R\big)^2$. 
%	Now, we consider a function $z_0:\Omega\to\mathbb{R}\,$, $z_0\ge 
%	0$,  	such that $P(z_0)\subset [1/2, 1]$. 
%	satisfying that 
%	 \[
%	 z_0(x)=\left\{
%	 \begin{array}{ll}
%	 0,	&\mbox{ for all }\;x\in[0,1]\!\setminus\!{\big({1\over 2}+{R\over4},\,
%	 {1\over 2}+{3R\over 4}\big)}, 	\smallskip\\
%%	 {(x_0+R/4,\,x_0+3R/4)}, 	\\
%	1,& \mbox{for all }\;x\in \big({1\over 2}+{R\over4},\,
%	 {1\over 2}+{3R\over 4}\big).
%%	\in (x_0+R/4,\,x_0+3R/4).
%	\end{array}
%	 \right.
%	 \]
%	Then 
%	\begin{equation}\label{P_z_0}
%		P(z_0)=\bigg[{1\over 2}+{R\over4},\,
%	 {1\over 2}+{3R\over 4}\bigg]
%%		[x_0+R/4,\,x_0+3R/4].
%	\end{equation}
%	 Since $z_0(y)=0$ for all $y\not\in [1/2, 1]$, we have that 
%	$x\not\in \big({1\over 2}+{R\over4},\,
%	 {1\over 2}+{3R\over 4}\big)$,  we have that 
%
	Now, we consider a function $z_0:\Omega\to\mathbb{R}\,$, $z_0\ge 
	0$,  	such that $P(z_0)\subset [1/2, 1]$. 
	 Since $z_0(y)=0$ in  $[0, 1/2]$, we have that  $K_Jz_0(x)=\!\displaystyle\int_{\Omega}\!\!J(x,y)z_0(y)dy\!= \! \int_{1/2}^{1}\!\!\!J(x,y)  
	z_0(y)dy$. 
	Moreover, from  \eqref{pos_J_K_counterexample}, we have 
	that for $\tilde{x}\in [0,1/2)$, $J(\tilde{x},y)=0$ for all
        $y\in [1/2, 1]$, (see Figure \ref{cuadrado}).
        \begin{figure} [htp]
	\begin {center}
	\includegraphics [height=5cm]{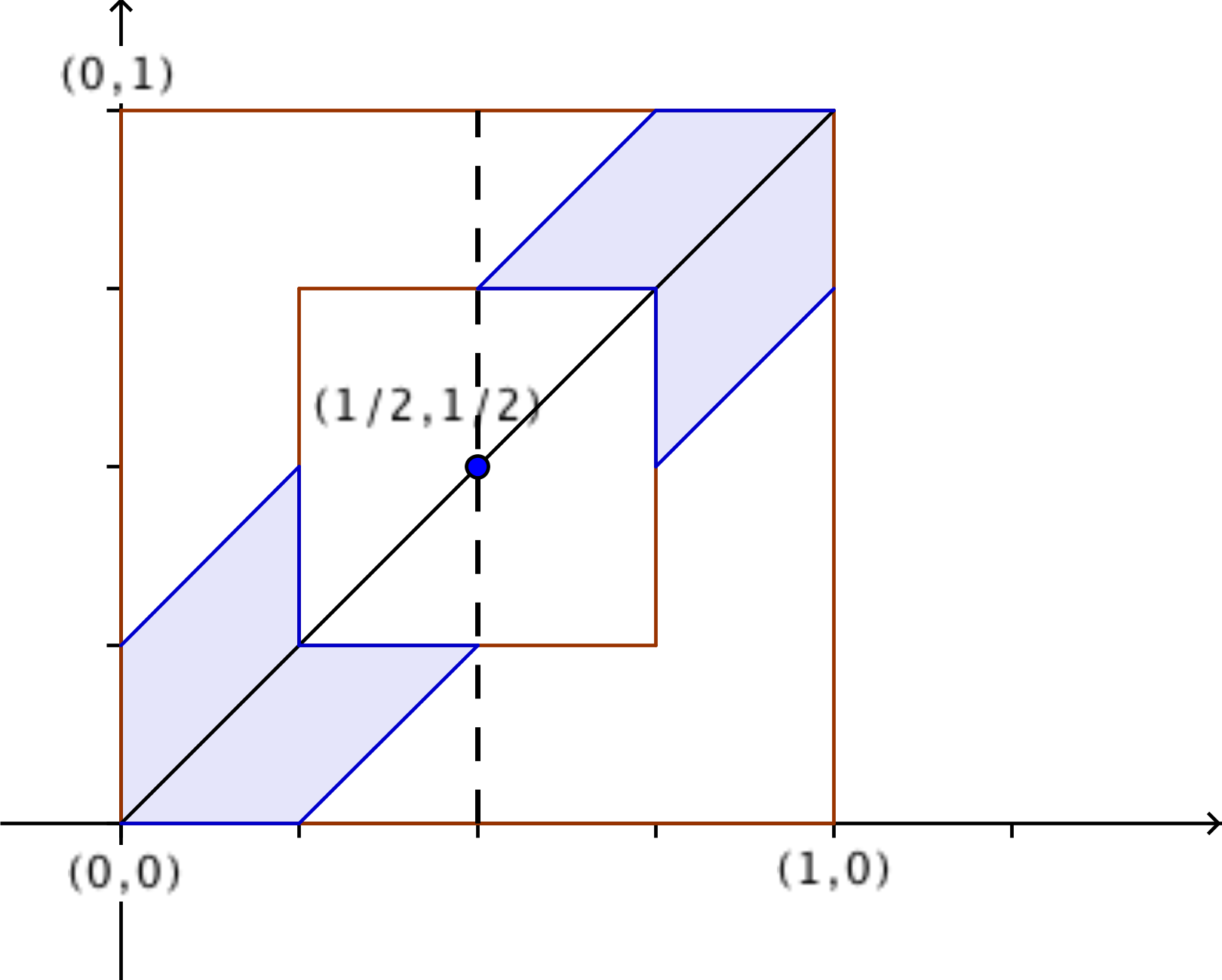}
	\caption{The shadowed area are the points $(x,y)$ where $J$ is
          strictly positive,  $R=1/4$. }
	\label{cuadrado}
	\end {center}
	\end{figure}
        
%
%	Let 
%	us prove this below: 
%	\begin{itemize}
%	\item If $\tilde{x}\in (1/2-R, 1/2)$, then
%	\begin{itemize}
%	\item if $y\in [1/2, 1/2+R)$, then $(\tilde{x},y)\in \big({1\over 2}-R,
%	{1\over 2}+R\big)^2$, and 
%	$J(\tilde{x}, y)=0$; 
%	\item if $y\in [1/2+R, 1]$, then $d(\tilde{x}, y)>R$, and  $J(\tilde{x}, 
%	y)=0$. 
%	\end{itemize}
%	\item If $\tilde{x}\in [0, 1/2-R]$, then for $y\in [1/2, 1]$, $d(\tilde{x}, y)>R$, 
%	and 	$J(\tilde{x}, y)=0$. 
%	\end{itemize}
%	Thus, 
%	\[K_J(z_0)(\tilde{x})= \int_{1/2}^{1}\!\!\!J(\tilde{x},y)
%%	\int_{x_0+R/4}^{x_0+3R/4}\!\!\!J(x,y)
%	z_0(y)dy=0,\;\forall \tilde{x}\in [0,1/2).\]
	Hence $K_Jz_0(\tilde x) =0$ in $ [0, 1/2)$, and therefore $P(K_Jz_0)
	\subset [1/2,1]$. 	
%	If we apply $K_J$ to $K_J(z_0)$, we obtain that 
%	\[
%	\begin{array}{ll}
%	K_J^2(z_0)(x)&=\!\!\displaystyle\int_{\Omega}\!\!J(x,y)K_J(z_0)(y)dy
%	= \displaystyle
%	\!
%	\int_{P(K_J(z_0))}\!\!\!\!\!\!\!\!\!\!\!\!\!\!\!\!\!\!J(x,y)K_J(z_0)(y)dy=\! \int_{1/2}^{1}\!\!\!\!J(x,y)
%%	\int_{x_0+R/4}^{x_0+3R/4}\!\!\!J(x,y)
%	K_J(z_0)(y)dy.
%	\end{array}
%	\]
%	We also have that for any $\tilde{x}\in [0,1/2)$, $J(\tilde
%	{x},y)=0$, for all $y\in [1/2, 1]$. Thus, 
%	$$P\big(K_J^2(z_0)\big)\subset [1/2, 1].$$ 
	Iterating this argument, we obtain that 
	$$
	P(K_J^n(z_0))\subset [1/2, 1]\quad\mbox{ for all }\;n\in\mathbb{N}.
	$$ 
\end{remark}
%

%%%%%%%%%%%%%%%%%%%%%%%%%%%%%%%%%%%%%%
%                												             %
%  		                    Adjoint operator of K_J                                         %
%                                                 								      %
%%%%%%%%%%%%%%%%%%%%%%%%%%%%%%%%%%%%%%

%% \subsubsection{The adjoint operator of $K_J$}

Now we describe the adjoint operator associated to $K_J$, and we prove
that if $J\in L^2(\Omega\times\Omega)$ and $J(x,y)=J(y,x)$ then the
operator $K_J$ is selfadjoint in $L^2(\Omega)$.

\begin{prop}\label{adjoint_operator}
	For $1\le\! p\!<\! \infty, \; 1\!\le \!q\!<\! \infty$. Let $(\Omega,\mu)$ be a 
	measure space. We assume that the mapping 
	$$x\mapsto J(x,\cdot)\;\mbox{ satisfies that }\; J\in L^q(\Omega,L^{p'}(\Omega)),
	$$
	 and  the mapping 
	 $$y\mapsto J^{*}(y,\cdot) := J(\cdot,y)\;\mbox{ satisfies that }\;J^{*}\in L^{p'}(\Omega,L^{q}
	(\Omega)).$$
	 Then the adjoint operator associated to $\,K_J\in\mathcal{L}(L^p(\Omega),
	  L^q(\Omega))$, is 
	 $$\,K_J^*:L^{q'}(\Omega)\rightarrow L^{p'}(\Omega),\; \mbox{ with} 
	 \;K_J^*=K_{J^*} . 
         $$
	 %%where $J^*(x,y)=J(y,x)$. 

In particular, if  $J$ satisfies that
	 \begin{displaymath} %%\label{symmetric_property_J}
	 	J(x,y)=J(y,x),
	 \end{displaymath} 
%	 then for 	$u\in L^p(\Omega)$ and $\;v\in L^{q'}(\Omega)$,
%	\begin{equation}\label{green_eq_1} 
%		\langle K_Ju ,\, v\rangle_{L^q,L^{q'}}=\langle u,K_Jv \rangle_{L^p,L^
%		{p'}}.
%	\end{equation}
%	 In the particular case in which $p=q=2$ 
and $J\in L^2(\Omega\times \Omega)$, the operator $K_J$ is selfadjoint
in  $L^2(\Omega)$.
 \end{prop}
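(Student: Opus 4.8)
The plan is to compute the adjoint directly from the defining duality pairing and Fubini's theorem. First I would fix $u \in L^p(\Omega)$ and $\varphi \in L^{q'}(\Omega)$, and write out the pairing
\[
\langle K_J u, \varphi \rangle_{L^q, L^{q'}} = \int_\Omega \Big( \int_\Omega J(x,y) u(y)\, dy \Big) \varphi(x)\, dx.
\]
The goal is to interchange the order of integration to rewrite this as $\langle u, K_{J^*}\varphi \rangle_{L^p, L^{p'}}$, where $K_{J^*}\varphi(y) = \int_\Omega J^*(y,x)\varphi(x)\, dx = \int_\Omega J(x,y)\varphi(x)\, dx$. To justify the swap I need absolute integrability of $(x,y) \mapsto J(x,y) u(y) \varphi(x)$ on $\Omega \times \Omega$; this follows from Hölder applied in the inner variable together with the hypothesis $J \in L^q(\Omega, L^{p'}(\Omega))$ (so that $x \mapsto \|J(x,\cdot)\|_{L^{p'}} u$-paired term lies in $L^q$, pairing against $\varphi \in L^{q'}$). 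Concretely, $\int_\Omega \int_\Omega |J(x,y)||u(y)||\varphi(x)|\,dy\,dx \le \int_\Omega \|J(x,\cdot)\|_{L^{p'}(\Omega)} \|u\|_{L^p(\Omega)} |\varphi(x)|\, dx \le \|J\|_{L^q(\Omega,L^{p'}(\Omega))} \|u\|_{L^p(\Omega)} \|\varphi\|_{L^{q'}(\Omega)} < \infty$, using that $q < \infty$ so $q'$ is the genuine conjugate and Hölder applies. This finiteness is exactly what Fubini--Tonelli requires.

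Once the interchange is justified, I get $\langle K_J u, \varphi\rangle = \int_\Omega u(y) \big( \int_\Omega J(x,y)\varphi(x)\, dx \big) dy = \langle u, K_{J^*}\varphi\rangle$. It remains to note that $K_{J^*}$ is well-defined and bounded from $L^{q'}(\Omega)$ to $L^{p'}(\Omega)$: this is precisely Proposition~\ref{prop1}(i) applied to the kernel $J^*$, using the hypothesis $J^* \in L^{p'}(\Omega, L^q(\Omega))$ (play the roles $p \rightsquigarrow q'$, $q \rightsquigarrow p'$, noting $(q')' = q$ and $(p')' = p'$... more carefully, $K_{J^*} \in \mathcal{L}(L^{q'}(\Omega), L^{p'}(\Omega))$ follows since $J^* \in L^{p'}(\Omega, L^{(q')'}(\Omega)) = L^{p'}(\Omega, L^q(\Omega))$). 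Since the identity $\langle K_J u, \varphi\rangle = \langle u, K_{J^*}\varphi\rangle$ holds for all $u, \varphi$ in the respective spaces, by the definition/uniqueness of the adjoint we conclude $K_J^* = K_{J^*}$.

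For the selfadjointness statement: taking $p = q = 2$, the hypotheses $J \in L^2(\Omega, L^2(\Omega)) = L^2(\Omega \times \Omega)$ and $J^* \in L^2(\Omega, L^2(\Omega))$ are both equivalent (by Fubini/Tonelli for the measurability and the Tonelli computation of the norm) to $J \in L^2(\Omega \times \Omega)$, so $K_J \in \mathcal{L}(L^2(\Omega))$ and $K_J^* = K_{J^*}$. When additionally $J(x,y) = J(y,x)$, we have $J^*(y,x) = J(x,y) = J(y,x) = J(y,x)$, i.e. $J^* = J$ as functions on $\Omega \times \Omega$, hence $K_{J^*} = K_J$ and therefore $K_J^* = K_J$.

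The only mildly delicate point — the "main obstacle," though it is routine — is the careful bookkeeping of conjugate exponents so that Proposition~\ref{prop1}(i) genuinely applies to $K_{J^*}$ with the stated domain and codomain, and confirming that the two integrability hypotheses on $J$ and $J^*$ are exactly what is needed (rather than, say, a single joint-measurability hypothesis, which in general does not suffice to conclude both $J \in L^q(\Omega,L^{p'})$ and $J^* \in L^{p'}(\Omega,L^q)$). The restriction $p, q < \infty$ is used precisely so that $L^{q'}$, $L^{p'}$ are honest dual spaces and Hölder in the interchange step is lossless; I would flag that the $p = \infty$ or $q = \infty$ cases are excluded for this reason. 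Everything else is a direct Fubini argument plus an invocation of the earlier boundedness result.
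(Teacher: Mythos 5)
Your proposal is correct and follows essentially the same route as the paper: write the duality pairing, interchange the order of integration by Fubini, and read off $K_J^*=K_{J^*}$, with the symmetric $L^2$ case following immediately. You simply supply more detail than the paper does (the Tonelli/H\"older justification of the interchange and the exponent bookkeeping showing $K_{J^*}\in\mathcal{L}(L^{q'}(\Omega),L^{p'}(\Omega))$ via Proposition~\ref{prop1}); the stray ``$(p')'=p'$'' is a typo you correct in the very next clause.
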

 \begin{proof} 
 	We consider $\, u\in L^p(\Omega)\,$ and $v\in L^{q'}(\Omega)$. Thanks 
	to Fubini's Theorem and the hypotheses on $J$
	%and the Riesz representation Theorem,
	\[
	\begin{array}{rl}
	\!\!\langle K_Ju ,v\rangle_{L^q(\Omega),L^{q'}(\Omega)} & =\displaystyle
	{\int_{\Omega}\int_{\Omega}J(x,y)u(y)dy\,v(x)dx} 
	\smallskip  = 
		\displaystyle{\int_{\Omega}\int_{\Omega}J(x,y)v(x)dx\,u(y)dy},
	\end{array}
	\]
	and $\displaystyle{\int_{\Omega}\int_{\Omega}J(x,y)v(x)dx\,u(y)dy}=
	\langle u,\, K_J^*v \rangle_{L^p(\Omega), L^{p'}(\Omega)},\;$with 
	\[\;K_J^*v (y)=\displaystyle{\int_{\Omega}J(x,y)v(x)dx}=\int_{\Omega}
	J^*(y,x)v(x)dx=K_{J^*}v (y). 
        \]

%	In particular if $u\in L^p(\Omega)$ and $\;v\in L^{q'}(\Omega)$ and $J$ 
%	satisfies that $J(x,y)=J(y,x)$, we obtain 
%	\begin{equation}\label{eq_adjoint_3} 
%		\langle K_Ju ,\, v\rangle_{L^q,L^{q'}}=\langle u,K_Jv \rangle_
%		{L^p,L^{p'}}.
%	\end{equation}
%	An  immediate consequence  of  \eqref{eq_adjoint_3} is the case in which 
%	$p=q=2$, that we have that $K_J$ is selfadjoint in $L^2(\Omega)$.

The symmetric case in $L^{2}(\Omega)$ is now obvious. 
\end{proof}

%%%%%%%%%%%%%%%%%%%%%%%%%%%%%%%%%%%%%%
%                												             %
%  		                             Spectrum of K_J                                          %
%                                                 								      %
%%%%%%%%%%%%%%%%%%%%%%%%%%%%%%%%%%%%%%
 
%\subsubsection {Spectrum of $K_J$}
%\label{spectro_K}

We will now  prove that under certain hypotheses on 
$K_J$ the spectrum  $\sigma_{X}(K_J)$ is 
independent of $X$, with $X=L^p(\Omega)$,  $1\le p\le\infty$ or 
$X=\mathcal{C}_b(\Omega)$. We also characterize the spectrum of $K_J$ 
when $K_J$ is selfadjoint in $L^2(\Omega)$, and prove   
that under the same hypothesis on the positivity of $J$ 
in Proposition  \ref{K_aumenta_positividad}, the spectral radius of 
$K_J$ in $\mathcal{C}_b(\Omega)$ is a simple eigenvalue  
that has a strictly positive associated eigenfunction. 

The proposition below is for a general compact operator 
$K$, not only for the integral operator $K_J$ (see Propositions \ref
{K_compact} to check compactness for operators with kernel, $K_J$). 

\begin{prop}\label{sigma_independiente}  
	Let $(\Omega,\mu,d)$ be a metric measure space with   $\mu(\Omega)<\infty$.
	\begin{enumerate}
	\renewcommand{\labelenumi}{\roman{enumi}.}
	\item Assume  $K\!\in\!\mathcal{L}(L^{p_0}(\Omega),L^{p_1}
	(\Omega))$  for some $1\!\le\! p_0\!<\! p_1\!<\!\infty$ and 
	$K\!\in\!\mathcal{L}(L^{p_0}(\Omega),L^{p_0}(\Omega))$ is 
	compact. Then $K\in\mathcal{L}(L^{p}(\Omega),L^{p}(\Omega))$,
        for all $p\in [p_0,p_1]$, and  $\sigma_{L^p}(K)$ is independent of ~$p$.
	\item Assume
          $K\in\mathcal{L}(L^{p_0}(\Omega),L^{p_1}(\Omega))$ is
          compact for some  $1\le p_0< p_1\le\infty$. Then $K\in\mathcal{L}(L^{p}(\Omega),L^{p}
	(\Omega))$, for all $ p\in[p_0,p_1]$, and $\sigma_{L^p}(K)$ is independent of ~$p$.
	\item Assume  $K\in	\mathcal
          {L}(L^{p_0}(\Omega),\mathcal{C}_b(\Omega))$ for some  $1\le
          p_0\le\infty $  is compact and	$X=\mathcal{C}_b
	(\Omega)$ or $X=L^r(\Omega)$ with $r\in[p_0,\infty]$. Then  $K\in\mathcal{L}
	(X,X)$, and $\sigma_{X}(K)$ is independent of $X$.
	\end{enumerate}
\end{prop}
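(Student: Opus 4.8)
The plan is to show that the resolvent sets coincide by proving that for $\lambda\neq 0$, $\lambda\in\rho_X(K)$ is equivalent to $\lambda\in\rho_{X'}(K)$ for two comparable spaces $X'\subset X$ in the chain, and that $0$ always lies in the spectrum of a compact operator on an infinite-dimensional space (while if $\mu(\Omega)<\infty$ forces finite dimension the statement is trivial). The core mechanism is standard for "spectrum independent of the scale": if $K$ maps a smaller space $Y$ continuously into a larger space $X$, is compact on each, and $Y\hookrightarrow X$ densely (or at least continuously), then for $\lambda\neq 0$ the operator $\lambda I-K$ is invertible on $X$ iff it is invertible on $Y$. One direction is trivial (restrict the resolvent). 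For the other, suppose $(\lambda I-K)^{-1}$ exists on $X$; given $f\in Y\subset X$ let $u=(\lambda I-K)^{-1}f\in X$; then $\lambda u=f+Ku$, and since $f\in Y$ and $Ku\in$ (the target space of $K$, which is $\subset Y$ by the hypotheses in each of the three cases), we get $u\in Y$, so $(\lambda I-K)^{-1}$ restricts to a map $Y\to Y$, and its boundedness on $Y$ follows from the closed graph theorem together with continuity of $Y\hookrightarrow X$. This gives $\rho_X(K)\setminus\{0\}=\rho_Y(K)\setminus\{0\}$, hence $\sigma_X(K)\setminus\{0\}=\sigma_Y(K)\setminus\{0\}$; and since both $\sigma$'s contain $0$ (compact operators, $\mu(\Omega)<\infty$ so no risk of a bounded inverse unless everything is finite-dimensional, in which case $0$ may be excluded but then both spectra are literally equal anyway because the operator is the same finite matrix), the full spectra agree.

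First I would set up the three cases uniformly. In case i, $K\in\mathcal L(L^{p_0},L^{p_1})$ and is compact on $L^{p_0}$; by Proposition \ref{compacidad_cobos} (using $\mu(\Omega)<\infty$ so that $L^{p_1}\hookrightarrow L^{p_0}$, hence $K$ is compact on $L^{p_0}$ and, composing with the continuous inclusion $L^{p_1}\hookrightarrow L^{p_0}$ repeatedly, compact on each intermediate $L^p$), $K\in\mathcal L(L^p,L^p)$ is compact for all $p\in[p_0,p_1]$. In case ii, $K\in\mathcal L(L^{p_0},L^{p_1})$ is compact for some $p_0<p_1\le\infty$; again $L^{p_1}\hookrightarrow L^p\hookrightarrow L^{p_0}$ for $p\in[p_0,p_1]$, so $K$ is compact on every such $L^p$. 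In case iii, $K\in\mathcal L(L^{p_0},\mathcal C_b(\Omega))$ is compact, and since $\mathcal C_b(\Omega)\hookrightarrow L^\infty(\Omega)\hookrightarrow L^r(\Omega)\hookrightarrow L^{p_0}(\Omega)$ for $r\in[p_0,\infty]$ (finite measure), $K$ is compact as an operator on $\mathcal C_b(\Omega)$ and on each $L^r$. The key structural fact I would isolate as a lemma-within-the-proof is: in all three cases $K$ maps the largest relevant space, $L^{p_0}(\Omega)$, into the smallest one ($L^{p_1}$, $L^{p_1}$, or $\mathcal C_b(\Omega)$ respectively), and all the spaces in the chain sit between these two with continuous inclusions.

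Then I would run the resolvent argument above between $L^{p_0}$ and any space $X$ in the chain: for $\lambda\neq 0$, $\lambda\in\rho_{L^{p_0}}(K)\iff\lambda\in\rho_X(K)$. For "$\Leftarrow$" (assume invertible on the larger $L^{p_0}$): take $f\in X$, set $u=(\lambda I-K)^{-1}f\in L^{p_0}$, write $u=\lambda^{-1}(f+Ku)$; here $f\in X$ and $Ku$ lies in the smallest space $\subset X$, so $u\in X$; boundedness of the restriction on $X$ follows from the closed graph theorem. For "$\Rightarrow$" just restrict. Hence $\sigma_X(K)\setminus\{0\}=\sigma_{L^{p_0}}(K)\setminus\{0\}$ for every $X$ in the chain, and $0$ belongs to all these spectra (compactness plus infinite dimension; the finite-dimensional exceptional case is trivial since then $K$ acts as the same operator on all the coinciding spaces). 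Therefore $\sigma_X(K)$ is independent of $X$. The main obstacle I anticipate is purely bookkeeping: checking in case iii that $Ku\in\mathcal C_b(\Omega)$ for $u\in L^r$ with $r\ge p_0$ — but this is immediate since $L^r\hookrightarrow L^{p_0}$ and $K(L^{p_0})\subset\mathcal C_b(\Omega)$ by hypothesis — and making sure the closed graph argument is applied in a space where it is legitimate (all the $L^p$ and $\mathcal C_b(\Omega)$ are Banach). No delicate estimate is needed; the whole proof rests on compactness, the Riesz–Thorin/Cobos input already cited, and the inclusion chain furnished by $\mu(\Omega)<\infty$.
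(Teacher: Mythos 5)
Your overall strategy is essentially the paper's argument recast in resolvent language: finite measure gives the inclusion chain, Proposition \ref{compacidad_cobos} together with composition with the inclusions gives compactness of $K$ on every space in the chain, and the identity $u=\lambda^{-1}(f+Ku)$ with $Ku$ landing in the smallest space is exactly the bootstrap the paper applies to eigenfunctions. The direction you work out in detail (resolvent on the large space $L^{p_0}$ restricts to the intermediate space $X$), including the closed-graph step, is correct.

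The step that fails as written is the other direction, which you dismiss with ``just restrict.'' That direction asserts: if $\lambda I-K$ is invertible on the \emph{smaller} space $X$, then it is invertible on the \emph{larger} space $L^{p_0}$. There is nothing to restrict — an inverse defined only on $X$ says nothing a priori about solvability of $(\lambda I-K)u=f$ for $f\in L^{p_0}\setminus X$, nor about injectivity of $\lambda I-K$ on all of $L^{p_0}$. This is in fact the direction carrying the content: it is the contrapositive of ``$\lambda$ a nonzero eigenvalue on $L^{p_0}$ implies $\lambda$ a nonzero eigenvalue on $X$,'' which is precisely where the paper uses compactness of $K$ on $L^{p_0}$ (so that the nonzero spectrum consists of eigenvalues) together with the bootstrap $\Phi=\lambda^{-1}K\Phi\in L^{p_1}\subset X$. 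You can also repair it while staying in resolvent language: injectivity on $L^{p_0}$ holds because any $u\in\ker(\lambda I-K)$ satisfies $u=\lambda^{-1}Ku\in L^{p_1}\subset X$ and hence $u=0$ by invertibility on $X$; surjectivity holds by solving $(\lambda I-K)v=Kf$ in $X$ (legitimate since $Kf\in L^{p_1}\subset X$) and checking that $u=\lambda^{-1}(f+v)$ solves $(\lambda I-K)u=f$ in $L^{p_0}$, with boundedness from the explicit formula. Either repair uses only ingredients already in your write-up, but as stated the step is a gap — and, somewhat ironically, the direction you proved carefully is the one that is trivial in the eigenvalue formulation (an eigenfunction in the smaller space is automatically one in the larger space).
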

\begin{proof}\noindent\\
	\indent{\it i.} Thanks to Proposition \ref{compacidad_cobos}, we have that 
	 $K\in \mathcal{L}(L^{p}(\Omega),L^{p}(\Omega))$ is compact
	for all $p\in[p_0,p_1]$. Thus the spectrum of $K$ is composed by  zero 
	and a discrete set of eigenvalues of finite multiplicity, (see \cite[chap. 
	6]{Brezis}). Let us prove now 
	that 	the eigenvalues of the spectrum $\sigma_{L^p(\Omega)}(K)$ are 
	independent of $p$.
	
        Now if $\lambda\in \sigma_{L^{p_1}}(K)$ is an
      eigenvalue, consider  an 	associated eigenfunction $\Phi\in L^{p_1}(\Omega)$. Since $\mu
	(\Omega)<\infty$ we have that   $\Phi\in L^p(\Omega)$  for all
        $p\in [p_0,p_1]$. Hence,  $\lambda\in \sigma_{L^{p}}(K)$ for all $p\in[p_0,p_1]$. 
	
	On the other hand,  if $\lambda\in \sigma_{L^{p}(\Omega)}(K)$ is an eigenvalue, with 
	$p\in [p_0,p_1)$, then any  associated eigenfunction $\Phi\in L^{p}(\Omega)$ 
	satisfies that 
	 \begin{equation}\label{eigenfunction_def}
		 K\Phi=\lambda \Phi.
	\end{equation}
	 Since $L^{p}(\Omega)\hookrightarrow L^{p_0}(\Omega)$  and $K:L^{p_0}(\Omega)\to L^{p_1}(\Omega)$, 
	 then $K\Phi \in L^{p_1}(\Omega)$. From \eqref{eigenfunction_def}, we 
	 obtain that $\Phi\in L^{p_1}(\Omega)$. Hence, $\lambda\in
         \sigma_{L^{p_{1}}(\Omega)}(K)$. 

%$\Phi\in L^p(\Omega)$ 
%	 for $p\in [p_0,p_1]$. Thus, the result.

	{\it ii.} We know that  $K\in\mathcal{L}(L^{p_0}(\Omega),L^{p_1}
	(\Omega))$ is compact,  and we have that 
	$$
		L^{p_1}(\Omega)\hookrightarrow L^{p_0}(\Omega)\stackrel{K}{\longrightarrow} L^
		{p_1}(\Omega) \hookrightarrow L^{p_0}(\Omega).
	$$
	Therefore $K\in \mathcal{L}(L^{p_1}(\Omega),L^{p_1}(\Omega))$ is compact, 
	and the hypotheses of Proposition \ref{compacidad_cobos} are satisfied. 
	Therefore  $K\in \mathcal{L}(L^{p}(\Omega),L^{p}(\Omega))$ is compact for all 
	$p\in[p_0,p_1]$. From part {\it i.}, we have the result.

	{\it iii.} We know that  $K\in\mathcal{L}(L^{p_0}(\Omega),\mathcal{C}_b(\Omega))$ is 
	compact. Since $\mu(\Omega)<\infty$, we have that for any
        $r\in[p_0,\infty]$ 
$$
\mathcal{C}_b (\Omega)\hookrightarrow L^r(\Omega)\hookrightarrow L^{p_0}(\Omega)\stackrel{K}{\longrightarrow} 
\mathcal{C}_b (\Omega)\hookrightarrow L^{r}(\Omega) \hookrightarrow
L^{p_0}(\Omega) 
$$

%	$$
%		K:\mathcal{C}_b(\Omega)\hookrightarrow L^{p_0}(\Omega)
%		\longrightarrow \mathcal{C}_b
%		(\Omega)
%	$$
%	and
%	$$
%		K:L^{p_0}(\Omega)\longrightarrow \mathcal{C}_b(\Omega)
%		\hookrightarrow L^{p_0}
%		(\Omega)
%	$$
%	and for $r\in[p_0,\infty]$
%	$$
%		L^r(\Omega)\hookrightarrow L^{p_0}(\Omega)\stackrel{K}{\longrightarrow} 
%		\mathcal{C}_b
%		(\Omega)\hookrightarrow L^{r}(\Omega)
%	$$
	Therefore, $K\in\mathcal{L}(X,X)$ is compact for 
	$X=\mathcal{C}_b(\Omega)$ or 
	$X=L^{r}(\Omega)$ with $r\in[p_0,\infty]$. Hence, following the 
	arguments in 
	 {\it i.} we have that $\sigma_{X}(K)$ is independent of  $X$.
\end{proof}

%\begin{corolario}
%	 Let $(\Omega,\mu)$ be a measure space with $\mu(\Omega)<\infty$. For $1\le p\le
%	 \infty$, if $J\in BUC(\Omega,L^{p'}(\Omega))$, then $\sigma_{L^p}(K_J)
%	 $ is independent of $p$ for $1\le 
%p\le\infty$.
%\end{corolario}
%\begin{proof}
%	Thanks to Propositions \ref{K_compact}, we obtain that 
%	$\;K_J\in \mathcal{L}(L^p(\Omega),L^p(\Omega))$ is compact, for 
%	$1\le p\le\infty$. Finally, thanks to the previous Proposition 
%	\ref{sigma_independiente} we have that 
%	$\sigma_{L^p}(K_J)$ is independent of $\,p\,$.
%\end{proof}

%The following result holds for a general 
%selfadjoint operator in a Hilbert space, and the proof can be found in  
%\cite[p. 165]{Brezis}.
%\begin{prop}\label{intervalo_espectro}
%Let H be a Hilbert space and $T\in \mathcal{L}(H)$ a selfadjoint operator. Take
%\[
%\begin{array}{lcr}
%m =\inf\limits_{u\in H \atop \|u\|_{H}=1}\langle Tu,u\rangle_{H} &\qquad \mbox{and} 
%\qquad  & M=\sup\limits_{u\in H \atop \|u\|_{H}=1} \langle Tu,u \rangle_{H}. 
%\end{array}
%\]
%Then $\sigma(T)\subset [m,M]\subset\mathbb{R},\; m\in\sigma(T)$ and $M\in\sigma
%(T).$
%\end{prop}

The following Proposition gives more details 
about the spectrum of $K_J$. 

\begin{prop} \label{sigma(K)}
	Let $(\Omega,\mu,d)$ be a metric measure space with $\mu(\Omega)
	<\infty$. We assume $K_J\in\mathcal{L}(L^{p_0}(\Omega),\mathcal{C}
	_b(\Omega))$ is compact for some  $p_0\le 2$. Let $X=L^p(\Omega)$, with 
	$p\in[p_0,\infty]$, or $X=\mathcal{C}_b(\Omega)$, and assume $J$ satisfies 
	that $$J(x,y)=J(y,x).$$
	Then $K_J\in\mathcal{L}(X,X)$ and $\sigma_{X}(K_J)\setminus \{0\}$ is a 
	real sequence of eigenvalues of 
	finite multiplicity, independent of $X$, that converges to $0$. 
	
	Moreover,  if we consider 
	\begin{equation}\label{M_y_m_intervalo}
	\begin{array}{lcr}
	m =\!\!\!\!\inf\limits_{u\in L^2(\Omega) \atop \|u\|_{L^2(\Omega)}=1}\!\!\!\!
	\langle K_Ju ,u
	\rangle_{L^2(\Omega)} &	 \mbox{and}   & M=\!\!\!\!\sup\limits_{u
	\in L^2(\Omega) \atop \|u\|_{L^2(\Omega)}	=1} \!\!\!\!\langle K_Ju ,u \rangle_{L^2
	(\Omega)}, 
	\end{array}
	\end{equation}	
	then $\sigma_{X}(K_J)\subset [m,M]\subset\mathbb{R},\; m\in
	\sigma_{X}(K_J)$ and $M\in\sigma_{X}(K_J)$. 
	In particular, $L^2(\Omega)$ admits an orthonormal basis consisting of 
	eigenfunctions of $K_J$.
\end{prop}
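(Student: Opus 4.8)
The plan is to reduce the statement to the classical spectral theory of compact self-adjoint operators on the Hilbert space $L^2(\Omega)$, combined with the independence of the spectrum established in Proposition \ref{sigma_independiente}. First I would fix the functional setting: since $\mu(\Omega)<\infty$ and $p_0\le 2$ we have the continuous inclusion $L^2(\Omega)\hookrightarrow L^{p_0}(\Omega)$, so composing with $K_J\in\mathcal{L}(L^{p_0}(\Omega),\mathcal{C}_b(\Omega))$ shows that $K_J\in\mathcal{L}(L^2(\Omega),\mathcal{C}_b(\Omega))$ is compact and, in particular, $K_J\in\mathcal{L}(L^2(\Omega),L^2(\Omega))$ is compact. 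Since $2\in[p_0,\infty]$, the third part of Proposition \ref{sigma_independiente} applies and gives that $K_J\in\mathcal{L}(X,X)$ is compact for $X=\mathcal{C}_b(\Omega)$ and for $X=L^p(\Omega)$ with $p\in[p_0,\infty]$, with $\sigma_X(K_J)$ independent of such $X$; in particular $\sigma_X(K_J)=\sigma_{L^2}(K_J)$ throughout.

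The next step is to check that $K_J$ is self-adjoint on $L^2(\Omega)$. The bound $K_J\in\mathcal{L}(L^2(\Omega),\mathcal{C}_b(\Omega))$ means that, uniformly in $x$, the functional $u\mapsto K_Ju(x)$ is bounded on $L^2(\Omega)$, and by the Riesz representation theorem it is given by integration against $J(x,\cdot)$, so $\sup_{x\in\Omega}\|J(x,\cdot)\|_{L^2(\Omega)}<\infty$. Since $\mu(\Omega)<\infty$, Tonelli's theorem yields
\[
\|J\|_{L^2(\Omega\times\Omega)}^2=\int_\Omega\|J(x,\cdot)\|_{L^2(\Omega)}^2\,dx\le\mu(\Omega)\,\Big(\sup_{x\in\Omega}\|J(x,\cdot)\|_{L^2(\Omega)}\Big)^2<\infty ,
\]
so $J\in L^2(\Omega\times\Omega)$; as $J(x,y)=J(y,x)$, Proposition \ref{adjoint_operator} gives that $K_J$ is self-adjoint in $L^2(\Omega)$. (Equivalently, one checks $\langle K_Ju,v\rangle_{L^2(\Omega)}=\langle u,K_Jv\rangle_{L^2(\Omega)}$ directly by Fubini, whose applicability follows from the same $L^2$-estimate on $J(x,\cdot)$ together with $\mu(\Omega)<\infty$.)

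Finally I would invoke the Hilbert--Schmidt theorem for the compact self-adjoint operator $K_J$ on $L^2(\Omega)$ (see e.g.\ \cite[chap.\ 6]{Brezis}): $L^2(\Omega)$ possesses an orthonormal basis of eigenfunctions of $K_J$; the set $\sigma_{L^2}(K_J)\setminus\{0\}$ consists of real eigenvalues of finite multiplicity and, when infinite, is a sequence converging to $0$. Moreover, for a bounded self-adjoint operator the Rayleigh-quotient bounds give $\sigma_{L^2}(K_J)\subset[m,M]$, with $m$ and $M$ the least and greatest points of the spectrum; hence $m,M\in\sigma_{L^2}(K_J)$, where $m,M$ are as in \eqref{M_y_m_intervalo}, and when nonzero they are the extreme eigenvalues. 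Since $\sigma_X(K_J)=\sigma_{L^2}(K_J)$ by the first step, all the stated assertions follow for general $X$.

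The main obstacle is the self-adjointness step: one must convert the operator-theoretic hypothesis ``$K_J\in\mathcal{L}(L^{p_0}(\Omega),\mathcal{C}_b(\Omega))$ is compact'' into the concrete kernel integrability $J\in L^2(\Omega\times\Omega)$ needed to apply Proposition \ref{adjoint_operator}; the remaining ingredients are direct quotations of standard compact-operator theory and of the earlier propositions. A minor point to keep in mind is the degenerate situation in which $L^2(\Omega)$ is finite-dimensional (for instance $\Omega$ atomic with finitely many atoms), where the assertions ``$0\in\sigma_X(K_J)$'' and ``convergence to $0$'' are read vacuously, while $\sigma_X(K_J)\subset[m,M]$ and $m,M\in\sigma_X(K_J)$ still hold.
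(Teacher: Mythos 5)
Your proposal is correct and follows essentially the same route as the paper: reduce to the compact self-adjoint operator $K_J$ on $L^2(\Omega)$ via Proposition \ref{adjoint_operator} and the spectral theorem, then transfer the spectrum to general $X$ by Proposition \ref{sigma_independiente}. In fact you supply a step the paper's proof leaves implicit, namely deducing $\sup_{x}\|J(x,\cdot)\|_{L^2(\Omega)}<\infty$ and hence $J\in L^2(\Omega\times\Omega)$ from the boundedness of $K_J:L^{2}(\Omega)\to\mathcal{C}_b(\Omega)$ together with $\mu(\Omega)<\infty$, which is exactly what is needed to invoke the self-adjointness result.
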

\begin{proof}
	Thanks to Proposition \ref	{adjoint_operator}, $K_J$ is selfadjoint in 
	$L^2(\Omega)$, then   $\sigma_{L^2}(K_J)\setminus \{0\}$ is a real 
	sequence of eigenvalues of finite 
	multiplicity that converges to $0$, (see  \cite[chap.6 ]{Brezis}). 
	Furthermore, from Proposition \ref{sigma_independiente} we have that $\,
	\sigma_{X}(K_J)\,$ is independent of $X$. Thus, the result.
%	, so 
%	$\sigma_{X}(K_J)\setminus \{0\}$ is a real sequence of eigenvalues  of finite 
%	multiplicity that converges to $0$.
	
	On the other hand, 
%	as a consequence of Proposition \ref{intervalo_espectro}, 
	we have that $\sigma_{X}(K_J)\subset [m,M]\subset\mathbb{R}$, with 
	$m \in\sigma_{X}(K_J)$ and $M\in\sigma_{X}(K_J)$, 
	 where $m$ and $M$ are given by \eqref{M_y_m_intervalo}, and 
	 thanks to the Spectral Theorem (see \cite[chap.6]{Brezis}), we know 
	 that $L^2(\Omega)$ admits an orthonormal basis consisting of 
	 eigenfunctions of $K_J$.
\end{proof}

 The following Corollary states that under the hypotheses of Proposition \ref
{K_aumenta_positividad}, any nonnegative eigenfunction associated to 
the operator $K_J$ is in fact strictly positive as well as its 
associated eigenvalue.
 \begin{corolario}
 	Let 
	% $(\Omega,\mu,d)$ be a metric measure space, let  
	$J$ satisfy the hypotheses of 	Proposition \ref{K_aumenta_positividad} 
	and assume $\Omega$ is $R$-connected. 
	If $\Phi\ge 0$, is an eigenfunction associated to an eigenvalue $\lambda$ of the operator 
	$K_J$, then $\Phi>0$, and the eigenvalue, $\lambda$, is also strictly positive.
 \end{corolario}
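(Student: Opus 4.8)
The plan is to use the positivity-spreading result from Proposition~\ref{K_aumenta_positividad} together with the eigenfunction equation to bootstrap strict positivity. Suppose $\Phi\ge 0$ is an eigenfunction, $K_J\Phi=\lambda\Phi$, with $\Phi$ not identically zero (eigenfunctions are by definition nontrivial). First I would record that $\lambda\neq 0$: if $\lambda=0$ then $K_J\Phi\equiv 0$, but by the first conclusion of Proposition~\ref{K_aumenta_positividad} we have $P(K_J\Phi)\supset P_R^1(\Phi)\supset P^0(\Phi)=P(\Phi)\neq\emptyset$, so $K_J\Phi$ is not identically zero, a contradiction. Hence $\lambda\neq 0$. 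Next, from $K_J\Phi=\lambda\Phi$ and the fact that $K_J$ maps nonnegative functions to nonnegative functions (as $J\ge 0$), we get $\lambda\Phi=K_J\Phi\ge 0$ with $\Phi\ge 0$ nontrivial; therefore $\lambda>0$. This disposes of the claim about the eigenvalue.

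For strict positivity of $\Phi$, I would iterate the eigenvalue identity: $K_J^n\Phi=\lambda^n\Phi$ for all $n\in\mathbb{N}$. Consequently $P(\Phi)=P(\lambda^n\Phi)=P(K_J^n\Phi)$ for every $n$, since $\lambda^n>0$ only rescales the function and does not change its essential support. Now fix any point $x\in\Omega$ and let $\mathcal{K}=\{x\}$, which is trivially compact. Since $\Omega$ is $R$-connected and $\Phi\ge 0$ is nontrivial, Proposition~\ref{K_aumenta_positividad} gives $n_0(\Phi)\in\mathbb{N}$ with $P(K_J^n\Phi)\supset\mathcal{K}=\{x\}$ for all $n\ge n_0(\Phi)$. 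Combining, $x\in P(K_J^{n_0}\Phi)=P(\Phi)$. As $x\in\Omega$ was arbitrary, $P(\Phi)=\Omega$.

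Finally I would translate $P(\Phi)=\Omega$ into the pointwise statement $\Phi>0$. Strictly speaking $P(\Phi)=\Omega$ says that $\Phi$ is positive on a set of full measure near every point, i.e. $\Phi>0$ a.e.; if one wants the genuine pointwise inequality one uses that $\Phi=\lambda^{-1}K_J\Phi$ and that $K_J\Phi(x)=\int_\Omega J(x,y)\Phi(y)\,dy$: since $\Phi>0$ a.e. and for each $x$ the kernel $J(x,\cdot)$ is strictly positive on the ball $B(x,R)$ which has positive measure (Definition~\ref{def_measure}), the integral is strictly positive for every $x\in\Omega$, whence $\Phi(x)=\lambda^{-1}K_J\Phi(x)>0$ for all $x$. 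The main obstacle, such as it is, is bookkeeping: one must be careful that the index $n_0$ produced by Proposition~\ref{K_aumenta_positividad} depends only on $\Phi$ (through a chosen base point in $P(\Phi)$) and not on the target compact set in a way that would break the argument — but inspecting the proof of that proposition, $n_0(z)$ is obtained from an $R$-chain length for a single point of $P(z)$ plus a covering of $\mathcal{K}$, and for $\mathcal{K}$ a single point this is just a finite chain length, so there is no circularity.
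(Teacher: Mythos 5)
Your proposal is correct and follows essentially the same route as the paper: iterate the eigenvalue identity $K_J^n\Phi=\lambda^n\Phi$ and invoke Proposition \ref{K_aumenta_positividad} on compact sets (here singletons) to force $P(\Phi)=\Omega$, then read off positivity of $\lambda$ and $\Phi$; your treatment is in fact a little more careful than the paper's about separating the sign of $\lambda$ from the support argument and about passing from essential support to pointwise positivity. One small inaccuracy: $P(\Phi)=\Omega$ does \emph{not} give $\Phi>0$ a.e.\ (take $\Phi$ the indicator of a set whose complement has positive measure in every ball), but this is harmless because your final bootstrap only needs $\mu\big(\{\Phi>0\}\cap B(x,R)\big)>0$ for each $x$, which is exactly what $x\in P(\Phi)$ with $\delta=R$ provides.
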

 \begin{proof}
 	Thanks to Proposition \ref{K_aumenta_positividad}, we know that, for 
	every function $\Phi\ge 0$, not identically zero defined in $\Omega$, it 
	happens 
	that $P(K_J^n(\Phi))\supset P_R^n(\Phi),\;\forall n\in\mathbb{N}$.
	
	On the other hand, since $\Phi$ is an eigenfunction associated to an 
	eigenvalue $\lambda$ of the operator $K_J$, we have that  
	$K_J^n(\Phi)=\lambda^n \Phi,\;\;\forall n\in\mathbb{N}$. Moreover, 
	from Proposition \ref{K_aumenta_positividad}, we know that for any 
	compact set  $\mathcal{K}\subset\Omega$, there 
	exists $n_0\in\mathbb{N}$ such that $P(K_J^n(\Phi))\supset\mathcal
	{K}$ for all $n\ge n_0$. Thus, $K_J^n(\Phi)=\lambda^n \Phi$ is strictly 
	positive  in $\mathcal{K}$ for all $n\ge n_0$. Therefore  
	$\Phi$ must be strictly positive in any compact set  $\mathcal{K}$ of 
	$\Omega$. Hence, $\lambda>0$ and $\Phi$ must be strictly positive 
	in $\Omega$.
 \end{proof}

%
%\begin{remark}
%	For $p\ge 2$, If $J$ satisfies that $J(x,y)=J(y,x)$ and $J:L^p(\Omega)\to 
%\mathcal{C}(\overline{\Omega})$, then the hypothesis of the previous Proposition 
%\ref{sigma(K)}, are also satisfied.
%\end{remark}

Now, let us give some results about the spectral radius of the
operator $K$ 
 $$
 r(K)=\sup |\sigma(K)|.
 $$ 
For this,  we will use  Kre\u{\i{}}n-Rutman Theorem, \cite{krein_rutman}. 
We will work in the space 
$\mathcal{C}_b(\Omega)$, with $\Omega$ compact, 
and we consider the positive cone $C=\{f\in\mathcal{C}_b(\Omega); 
\;f\ge 0\}$, with Int$(C)=\{f\in \mathcal{C}_b(\Omega);\;f(x)>0,\;\forall x\in
\Omega \}
$. Thus, in the proposition below, we prove that the spectral radius of the 
operator $K$ is a simple eigenvalue that 
 has an associated eigenfunction that is strictly positive. 
%We choose the Sobolev space $\mathcal{C}(\overline{\Omega})$ 
%instead of $L^p(\Omega)$, because $L^p(\Omega)$ does not satisfy the 
%existence
%of a convex cone satisfying def 1.1.19 
%is not strongly positive. This would not allow us to apply the Kre\u{\i{}}n-
%Rutman 
%Theorem
%to the operator $K:L^p(\Omega)\to L^p(\Omega)$.
%	\\
%	\begin{figure} [htp]
%	\begin {center}
%	\includegraphics [height=6cm]{dominio_J_pos}
%	\end {center}
%	\end{figure}
%	\vspace{-1.5cm }
\begin{prop}\label{spectral_radius_K}
	Let $(\Omega,\mu,d)$ be a metric measure space, with $\Omega$~  
	compact and connected. We assume that $J$ satisfies 
	$$J(x,y)=J(y,x)$$
	and
	\[J(x,y)>0,\;\forall x,\,y\in\Omega\;\; \mbox{such that}\;\; d(x,y)<R,\; 
	\mbox{for some} \;R>0,\] 
	and $K_J\in\mathcal{L}(L^p(\Omega),\mathcal{C}_b(\Omega))$, 
	for some  $1\le p\le\infty$, is compact, (see Proposition \ref{K_compact} 
	{\it ii.}).  
	
	Then $K_J\!\in\!\mathcal{L}(\mathcal{C}_b(\Omega),\mathcal{C}_b
	(\Omega))$ is compact, 
	%for $X=\mathcal{C}(\overline{\Omega})$ or  $X=L^r(\Omega)$ with $r\in[p,\,
%\infty]$, 
	the spectral radius $r_{\mathcal{C}_b(\Omega)}(K_J)$ is a positive 
	simple eigenvalue, and its associated eigenfunction can be
        taken  strictly positive.
\end{prop}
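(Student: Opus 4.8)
The proof proceeds in three stages: realising $K_J$ as a compact positive operator on $\mathcal{C}_b(\Omega)$, producing a strictly positive eigenfunction for its spectral radius via the Kre\u{\i{}}n--Rutman Theorem, and proving that the corresponding eigenvalue is simple. For the first stage, since $\Omega$ is compact we have $\mu(\Omega)<\infty$, hence $\mathcal{C}_b(\Omega)\hookrightarrow L^p(\Omega)$ continuously, and $K_J$ restricted to $\mathcal{C}_b(\Omega)$ factors as $\mathcal{C}_b(\Omega)\hookrightarrow L^p(\Omega)\xrightarrow{\,K_J\,}\mathcal{C}_b(\Omega)$, a bounded map followed by a compact one; thus $K_J\in\mathcal{L}(\mathcal{C}_b(\Omega),\mathcal{C}_b(\Omega))$ is compact. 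Because $J\ge0$, $K_J$ leaves the cone $C=\{f\in\mathcal{C}_b(\Omega):f\ge0\}$ invariant and is order preserving, and $\mathrm{Int}(C)\ne\emptyset$, so the Kre\u{\i{}}n--Rutman Theorem is applicable as soon as we know $r_{\mathcal{C}_b(\Omega)}(K_J)>0$.

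To obtain $r_{\mathcal{C}_b(\Omega)}(K_J)>0$: as $\Omega$ is compact and connected, Lemma~\ref{compact_connected_R_connected} shows it is $R$-connected, so Proposition~\ref{K_aumenta_positividad} applied to the nontrivial function $z\equiv1$ gives $n_0\in\mathbb N$ with $K_J^{n_0}z>0$ on the compact set $\Omega$, hence $K_J^{n_0}z\ge c=cz$ for some $c>0$; iterating and using monotonicity, $K_J^{n_0k}z\ge c^kz$, so $\|K_J^{n_0k}\|_{\mathcal{L}(\mathcal{C}_b(\Omega))}\ge c^k$ and $r(K_J^{n_0})\ge c>0$, i.e. $r_{\mathcal{C}_b(\Omega)}(K_J)>0$. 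By the Kre\u{\i{}}n--Rutman Theorem \cite{krein_rutman}, $\lambda_1:=r_{\mathcal{C}_b(\Omega)}(K_J)$ is an eigenvalue of $K_J$ with an eigenfunction $\Phi\in C\setminus\{0\}$; and the Corollary following Proposition~\ref{sigma(K)} applies here (its hypotheses on $J$ and the $R$-connectedness of $\Omega$ hold), whence $\Phi>0$ on $\Omega$ and $\lambda_1>0$.

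For simplicity I would invoke the strong form of the Kre\u{\i{}}n--Rutman Theorem: by Proposition~\ref{K_aumenta_positividad}, $K_J^{n_0}$ maps $C\setminus\{0\}$ into $\mathrm{Int}(C)$ (strong positivity), and for such an operator the peripheral eigenvalue $\lambda_1$ is algebraically simple, with $\Phi$ spanning its eigenspace. When $p\le2$ (so that $J$ is square-integrable and $K_J$ acts selfadjointly on $L^2(\Omega)$) a more elementary route is available using the already established structure: by Propositions~\ref{adjoint_operator} and~\ref{sigma(K)}, $K_J$ is selfadjoint in $L^2(\Omega)$, $\sigma_{L^2}(K_J)=\sigma_{\mathcal{C}_b}(K_J)$, and $\lambda_1=M=\sup_{\|u\|_{L^2}=1}\langle K_Ju,u\rangle$; for a real eigenfunction $\psi$ of $\lambda_1$ one has $\langle K_J|\psi|,|\psi|\rangle\ge|\langle K_J\psi,\psi\rangle|=\lambda_1\|\psi\|_{L^2}^2$, so $|\psi|$ also maximizes the Rayleigh quotient, hence $K_J|\psi|=\lambda_1|\psi|$ (since $\lambda_1 I-K_J\ge0$); adding this to $K_J\psi=\lambda_1\psi$ gives $K_J\psi^{\pm}=\lambda_1\psi^{\pm}$, and were both $\psi^{+},\psi^{-}$ nontrivial they would both be strictly positive on $\Omega$ by that Corollary, contradicting the disjointness of their essential supports; thus $\psi$ has a fixed sign, and sliding $t\Phi$ until $\psi-t\Phi\ge0$ first vanishes (using $\Phi>0$ on the compact $\Omega$ and the Corollary once more) forces $\psi\in\mathbb R\Phi$, so the eigenspace is one-dimensional and selfadjointness upgrades this to algebraic simplicity. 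The delicate point, in either route, is getting \emph{algebraic} rather than merely geometric simplicity for the general exponent $1\le p\le\infty$, where $K_J$ need not act on $L^2(\Omega)$; this forces the use of the strong-positivity version of Kre\u{\i{}}n--Rutman, and hence the verification that $K_J^{n_0}(C\setminus\{0\})\subset\mathrm{Int}(C)$, which is exactly Proposition~\ref{K_aumenta_positividad} together with the compactness and connectedness of $\Omega$.
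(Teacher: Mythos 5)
Your proof is correct and follows essentially the same route as the paper: factor $K_J$ through $L^p(\Omega)$ to get compactness on $\mathcal{C}_b(\Omega)$, use Lemma \ref{compact_connected_R_connected} and Proposition \ref{K_aumenta_positividad} to get strong positivity of the power $K_J^{n_0}$, and conclude by the Kre\u{\i{}}n--Rutman Theorem. You are in fact more careful than the paper on one point: the paper simply asserts that ``$K_J$ is strongly positive'' and invokes Kre\u{\i{}}n--Rutman, whereas it is only $K_J^{n_0}$ that maps $C\setminus\{0\}$ into $\mathrm{Int}(C)$, and your explicit verification that $r_{\mathcal{C}_b(\Omega)}(K_J)>0$ together with your appeal to the strong-positive-power version of the theorem (or, alternatively, the $L^2$ Rayleigh-quotient argument) closes exactly the gap the paper glosses over.
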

\begin{proof}
	 Since $\Omega$ is compact and connected 
	 then from Proposition \ref{K_aumenta_positividad} we obtain that, 
	 there 
	 exists $n_0 \in\mathbb{N}$ such that, for 
	 any nontrivial  nonnegative $u\in\mathcal{C}_b (\Omega)$,  
	 $\Omega= P_R^n(u)$, for all $n\ge n_0$, (see Definition \ref
	 {sets_P_n}), 
%	 In  
%	 particular, this happens for $n_0 =\left([diam(\Omega)]+1\right)
%	 /R$, where $
%	 [\,\cdot\,]$ is the integer part of a number and $diam(X)=\sup \{d(x,y): x,y\in
%	  X\}$.
%	 \\ 
%	 On the other hand, thanks to Proposition \ref{K_aumenta_positividad}, 
%	we	 know  that for every nonnegative $u\in\mathcal{C}_b(\Omega)$ 
	and $\forall n\in
	 \mathbb{N}$, 
	 $P(K^nu )\supset P_R^n(u)$. Therefore $\Omega= P_R^n(u)\subset 
	 P(K^nu )$  for all $n\ge n_0$, i.e., for any nonnegative $u\in\mathcal
	 {C}_b(\Omega)$,  $K^n_Ju >0$ in  $\Omega$ 
	 for all $n\ge n_0$. Hence, $K_J$ is strongly positive in $\mathcal{C}
	 _b (\Omega)$. Moreover  
	 $K_J:\mathcal{C}_b(\Omega)\hookrightarrow L^p(\Omega)
	 \longrightarrow \mathcal{C}_b (\Omega)$ 
	 is compact. Hence, thanks to Kre\u{\i{}}n-Rutman Theorem, (see \cite
	 {krein_rutman}), the  spectral radius $r_{\mathcal{C}_b(\Omega)}(K_J)
	 $  is a positive  simple  eigenvalue with an  eigenfunction $\Phi$ 
	 associated  to it that is strictly positive. 
\end{proof}

A similar result was proved by Bates and Zhao \cite{bates_zhao}, for 
$\Omega\subset \R^N$ open,  but with the  stronger assumption  $J(x,y)>0$ for all $x,\,y\in\Omega$.

%%\subsection{The multiplication operator $hI$}
% \label{properties_h}

\subsection{Properties of  $K-hI$}

Let $(\Omega,\mu,d)$ be a metric measure space. We will always assume
below that

$\bullet$ If $X=L^p(\Omega)$, with $1\le p\le \infty$, we assume $h\in L^
{\infty}(\Omega)$.

$\bullet$ If $X=\mathcal{C}_b(\Omega)$, we assume $h\in \mathcal{C}_b
(\Omega)$.

The following result collects some properties of multiplication
operators. Note that below we denote $R(h)$ the range of the function
$h:\Omega \to \R$ and $\overline{R(h)}$, its closure. 

\begin{prop}\label{spectrum_resolvent_hI}
	Let  
	%$(\Omega,\mu,d)$ be a metric measure space,  
	$h$ be  as above
        and consider  the multiplication operator $hI$, that maps
$$u(x)\mapsto h(x)u(x).$$
%	Then $hI\in\mathcal{L}(X,X)$ and $\sigma_{X}(hI)$ is independent of 
%	$X$. Moreover, 
	Then the  resolvent set and spectrum  of the multiplication
        operator are independent of $X$ and are   given 
	by 
	$$\rho_X(hI)=\mathbb{C}\setminus \overline{R(h)}, \quad \sigma_X(hI)=\overline{R(h)},$$
	respectively. Moreover, for $X=L^p(\Omega)$, 
	 the eigenvalues 	associated to 
	$hI$ have infinite multiplicity and satisfy 
%exist only when the function $h$ is 
%	constant in subsets of $\Omega$ with positive measure, i.e., 
	\[EV(hI)=\left\{\alpha\; ;\; \mu\left(\left\{x\in\Omega\;;\; h(x)=\alpha\right\}
	\right)
	>0 \right).\]
%	The eigenvalues of the multiplication operator $hI$ have  {\bf infinite} 
%	multiplicity.
	
	On the other hand, for $X=\mathcal{C}_b(\Omega)$, 	the
        eigenvalues have infinite multiplicity and satisfy 
        \begin{displaymath}
          	EV(hI)\!\supset \!\left\{\alpha;\ \left\{x\in\Omega\;;\;
          h(x)=\alpha\right\} \; \mbox{has nonempty interior}\right\}
        \end{displaymath}
%	\[EV(hI)\!\supset \!\left\{\alpha;\; \exists A \mbox{ open  with }
%	\mu(A)>0 \mbox{ s.t. } \!A\subset \!\left\{x\in\Omega\;;\;
%          h(x)=\alpha\right\}\right\}\]
%% \!=\!\mathcal{F}
%	and the eigenvalues  of $hI$ in $\mathcal{F}$ have  {\bf 
%	infinite} 
%	multiplicity.
\end{prop}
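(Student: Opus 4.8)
The plan is the standard one for multiplication operators: invert $hI-\lambda I$ explicitly off $\overline{R(h)}$ and manufacture approximate eigenvectors on $\overline{R(h)}$. One preliminary remark is needed: when $h\in L^\infty(\Omega)$ the symbol $R(h)$ must be read as the essential range of $h$ (otherwise the claimed equality would depend on the representative, whereas $\sigma_X(hI)$ does not), and when $h\in\mathcal{C}_b(\Omega)$ is continuous the axiom in Definition~\ref{def_measure} that every ball has strictly positive measure forces the essential range and the ordinary closed range $\overline{R(h)}$ to coincide, so there is nothing to reconcile. With this convention, for $\lambda\in\mathbb{C}\setminus\overline{R(h)}$ put $\delta=\dist(\lambda,\overline{R(h)})>0$; then $|h(x)-\lambda|\ge\delta$ almost everywhere (everywhere, in the continuous case), so $g=(h-\lambda)^{-1}$ lies in $L^\infty(\Omega)$ (resp.\ $\mathcal{C}_b(\Omega)$) with $\|g\|_\infty\le 1/\delta$, and the multiplication operator $gI$ is a bounded two-sided inverse of $hI-\lambda I$. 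Hence $\lambda\in\rho_X(hI)$, with resolvent bounded by $1/\delta$, simultaneously for every admissible $X$.

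For the reverse inclusion, fix $\lambda\in\overline{R(h)}$ and $\varepsilon>0$, and build a Weyl sequence. The set $A_\varepsilon=\{x\in\Omega:\ |h(x)-\lambda|<\varepsilon\}$ has positive measure (this is precisely what $\lambda\in$ essential range means), and by $\sigma$-finiteness of $\mu$ we may choose $B_\varepsilon\subset A_\varepsilon$ with $0<\mu(B_\varepsilon)<\infty$. If $X=L^p(\Omega)$, $1\le p\le\infty$, the normalized indicator $u_\varepsilon=\chi_{B_\varepsilon}/\|\chi_{B_\varepsilon}\|_{L^p(\Omega)}$ has unit norm and $\|(hI-\lambda I)u_\varepsilon\|_{L^p(\Omega)}=\|(h-\lambda)u_\varepsilon\|_{L^p(\Omega)}\le\varepsilon$. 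If $X=\mathcal{C}_b(\Omega)$, pick by continuity of $h$ a point $x_\varepsilon$ with $|h(x_\varepsilon)-\lambda|<\varepsilon/2$ and then $r_\varepsilon>0$ with $|h-\lambda|<\varepsilon$ on $B(x_\varepsilon,r_\varepsilon)$; the Lipschitz bump $u_\varepsilon(x)=\max\{0,\,1-d(x,x_\varepsilon)/r_\varepsilon\}$ belongs to $\mathcal{C}_b(\Omega)$, has unit sup-norm, is supported in $B(x_\varepsilon,r_\varepsilon)$, and so $\|(hI-\lambda I)u_\varepsilon\|_{\mathcal{C}_b(\Omega)}\le\varepsilon$. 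In either case, were $\lambda\in\rho_X(hI)$ we would get $1=\|u_\varepsilon\|_X\le\|(hI-\lambda I)^{-1}\|_{\mathcal{L}(X)}\,\varepsilon$ for all $\varepsilon>0$, which is absurd; hence $\lambda\in\sigma_X(hI)$. Combined with the previous paragraph this yields $\sigma_X(hI)=\overline{R(h)}$ and $\rho_X(hI)=\mathbb{C}\setminus\overline{R(h)}$ for every admissible $X$, in particular the independence of $X$.

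For the point spectrum, $\alpha\in EV(hI)$ means there is $u\in X\setminus\{0\}$ with $(h-\alpha)u=0$, that is, $u$ vanishes (a.e., resp.\ everywhere) off $Z_\alpha:=\{x\in\Omega:\ h(x)=\alpha\}$. When $X=L^p(\Omega)$ with $1\le p<\infty$, such $u$ exists precisely when $\mu(Z_\alpha)>0$, and then the eigenspace is $\{u\in L^p(\Omega):\ u=0 \text{ a.e. on }\Omega\setminus Z_\alpha\}$, isometrically $L^p(Z_\alpha)$, which is infinite-dimensional; this is the stated description of $EV(hI)$ together with its infinite multiplicity (the case $p=\infty$ is identical). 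When $X=\mathcal{C}_b(\Omega)$, if $Z_\alpha$ has nonempty interior $U$ then any Lipschitz bump supported in a ball contained in $U$ is an eigenfunction, and bumps with pairwise disjoint supports give an infinite linearly independent family, so $\alpha\in EV(hI)$ with infinite multiplicity; this gives the asserted inclusion (and in fact equality, since a nonzero continuous $u$ with $\{u\ne0\}\subset Z_\alpha$ exhibits the nonempty open set $\{u\ne0\}$ inside $Z_\alpha$).

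I expect the only delicate points to be bookkeeping: pinning down the meaning of $R(h)$ for $L^\infty$ data and checking it agrees with $\overline{R(h)}$ for continuous $h$ through the positive-measure-of-balls axiom, and producing genuine continuous test functions in an abstract metric space, both of which are handled by the distance-function bumps above; the rest is the classical spectral analysis of multiplication operators.
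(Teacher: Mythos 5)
Your proof is correct and follows essentially the same route as the paper: explicit inversion of $hI-\lambda I$ by multiplication with $(h-\lambda)^{-1}$ off $\overline{R(h)}$, and identification of the eigenspaces as $L^p(Z_\alpha)$ (resp.\ continuous functions supported in the interior of $Z_\alpha$). The only difference is that you make explicit two points the paper leaves implicit --- that $R(h)$ must mean the essential range in the $L^\infty$ case, and the Weyl-sequence argument showing the formal inverse is unbounded when $\lambda\in\overline{R(h)}$ --- which strengthens rather than changes the argument.
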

\begin{proof}
	%We know that 
%	$h\in L^{\infty}(\Omega)$ if and only if $hI\in\mathcal{L}(L^{p}(\Omega),
%	\,L^{p}(\Omega))$, for all $1\le p\le \infty$. 
	If $X=L^p(\Omega)$ consider $f\in L^p(\Omega)$ and $u\in L^p	(\Omega)$,
        such that $h(x)u(x)-\lambda u(x)  = f(x)$, that is, 
        \begin{displaymath}
          u(x)  = \displaystyle{ \frac{f(x)}{h(x)-\lambda}=\frac{1}{h(x)-\lambda}\,f(x)}.
        \end{displaymath}
%\begin{equation}\label{lambda_resolvente_hI}
%	\begin{array}{rcl}
%	h(x)u(x)-\lambda u(x) & = & f(x)\\
%%	(h(x)-\lambda)u(x) & = & f(x)\\
%	u(x) & = &\displaystyle{ \frac{f(x)}{h(x)-\lambda}=\frac{1}{h(x)-\lambda}\,f(x)}.
%	\end{array}
%	\end{equation}
	Then we have that $\lambda\!\in\!\rho_{L^p(\Omega)}(hI)\,$ if and 
	only if $(hI-\lambda I)^{-1}\in  
	\mathcal{L}(L^p(\Omega))$, if and only if $\;\displaystyle
	{\frac{1}{h-\lambda}\in L^{\infty}(\Omega)}$, and this happens
%
%        when 
%	\[\!
%	\left| \frac{1}{h(x)-\lambda}\right|\le C,\forall x\in\Omega\mbox{,} 
%	\mbox{ then }\exists\delta>0, \mbox{ such that } \; |h(x)-\lambda|>
%	\delta,\;\forall x\in\Omega, 
%	\]
%	i.e., 
if and only if $\lambda\not\in \overline{\mbox{R}(h)}$. Thus,
$\rho_{L^p(\Omega)}(h)=\mathbb{C}\setminus \overline{\mbox{R}(h)}$.
%	
%	Now, we have to prove that $\rho_{L^p(\Omega)}(hI)\subset\mathbb{C}
%	\setminus \overline{R(h)}$. If we take $\lambda\notin \mathbb{C}
%	\setminus\overline{R(h)}$, then we will prove that  $\lambda\notin\rho
%	(h)$. We take $\lambda\in \overline{R(h)}$ then $\forall
%	\varepsilon>0$, $\exists x\in\Omega$ such that $|h(x)-\lambda|<
%	\varepsilon$, then $\displaystyle\frac
%	{1}{|h(x)-\lambda|}>\frac{1}{\varepsilon}$, then 
%	$(h-\lambda)^{-1}\not\in L^{\infty}(\Omega)$. Hence, from Lemma 
%	\ref{h_L_infty_hI_L}, we have that $(hI-\lambda I)^{-1}\not\in \mathcal{L}
%	(L^p(\Omega),L^p(\Omega))$. Thus, $\lambda\notin\rho(h)$. 
%	% porque \lambda\in R(hI)
%	Then, we have proved that $\rho_{L^p(\Omega)}(hI)=\mathbb{C}\setminus
%	 \overline{\mbox
%	{R}(h)}$ and its spectrum is by definition $\sigma(hI)=\mathbb{C}\setminus 
%	\rho(hI)=\overline{R(h)}$. Since $\overline{R(h)}$ is independent of
%	$L^p(\Omega)$, then the spectrum of $hI\in\mathcal{L}(L^p(\Omega),
%	 L^p	(\Omega))$ is independent of $p$.
	
	If $\lambda$ is an  eigenvalue, then for some  $\Phi\in L^p
	(\Omega)$ with $\Phi\not\equiv 0$, we have 
	\[h(x)\Phi(x)=\lambda \Phi(x)\]
	and this only happens if there exists a set $A\subset\Omega$, with $\mu(A)
	>0$, such that $h(x)=\lambda$ for all $x\in A\subset\Omega$. 
        %Then, the 
%	eigenfunctions $\Phi$ associated to $\lambda$ satisfy that
%	$$\Phi\in L^{p}(A),\;\mbox{ and }\; \Phi(x)=0, \;\forall x\in\Omega\setminus A.
%	$$
	Hence, we have that $\mbox{Ker}(hI-\lambda I)= L^{p}(A)$. Thus, the result.
	
        If $X=\mathcal{C}_b(\Omega)$ the
          arguments run as above. Just note that if $\{\lambda;\ \left\{x\in\Omega\;;\;
          h(x)=\lambda\right\}$ has nonempty interior, $A$,  then
        $\mbox{Ker}(hI-\lambda I)= \{\Phi\in\mathcal{C}_b(\Omega): 
	\Phi(x)=0, \;\forall x\in\Omega\setminus A\}$. 
\end{proof}

%%%%%%%%%%%%%%%%%%%%%%%%%%%%%%%%%%%
%%%%%%%%%% formulas de Green %%%%%%%%%%%%%%%
%%%%%%%%%%%%%%%%%%%%%%%%%%%%%%%%%%%

%%\subsubsection{Green's formulas for $K_J-h_0I$}

Now  we consider the particular case of the function 
$$
 h_0(x)=\int_{\Omega}J(x,y)dy.
$$
for which we assume $J \in L^{\infty}(\Omega, L^{1}(\Omega))$ and
hence   $h_0\in L^{\infty}(\Omega)$. 

%Green's formulas will be useful to obtain some properties of the sign of the spectrum of the 
%operator $K_J-hI$. 

%
Then we have 

 \begin{prop}\label{Green's formulas}{\bf (Green's formulas)}$\;$
	Let  
	%$(\Omega,\mu, d)$ be a metric measure space such that 
	$\mu(\Omega)<\infty$. 
%	Since $\mu(\Omega)<\infty$, then $L^{\infty}(\Omega, L^1
%	(\Omega))\subset L^p(\Omega, L^{p'}(\Omega))$ for $1\le p<\infty$.
	Assume $J \in L^{\infty}(\Omega, L^{1}(\Omega)) \cap
        L^p(\Omega,L^{p'}(\Omega))$, for some  $1\le p <\infty$, and   
	 \begin{displaymath}  %%\label{symmetric_property_J_Green}
	 	J(x,y)=J(y,x). 
	 \end{displaymath} 
	 Then for $u\in L^p(\Omega)$ and $\;v\in L^{p'}(\Omega)$,
	\begin{equation}\label{green_eq_2}
		\langle K_Ju -h_0Iu , v\rangle_{L^p,L^{p'}}\!=\!\displaystyle -\frac{1}
		{2}\!\int_{\Omega}\!\int_{\Omega}\!\!J(x,y)(u(y)-u(x))(v(y)-v(x))dy\,dx.
	\end{equation}
	In particular, if $p=2$ we have that for $u\in L^2(\Omega)$
	\begin{displaymath} %%\label{green_eq_3}
		\langle K_Ju -h_0\,Iu ,\, u\rangle_{L^2,L^2}=\displaystyle -\frac{1}
		{2}\int_{\Omega}\int_{\Omega}J(x,y)(u(y)-u(x))^2 dy\,dx.
	\end{displaymath}
 \end{prop}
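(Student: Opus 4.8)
The plan is to compute the pairing $\langle K_J u - h_0 I u, v\rangle$ directly as a double integral and then symmetrize. Writing everything out,
\[
\langle K_Ju - h_0 I u, v\rangle_{L^p,L^{p'}} = \int_\Omega\int_\Omega J(x,y)u(y)\,v(x)\,dy\,dx - \int_\Omega\int_\Omega J(x,y)u(x)\,v(x)\,dy\,dx,
\]
where the second term comes from $h_0(x) = \int_\Omega J(x,y)\,dy$. The two things I need to justify before manipulating these integrals are: (i) that each of them is absolutely convergent, so that Fubini applies and the rearrangements are legitimate; and (ii) that the exchange of the names of the variables $x\leftrightarrow y$ in a double integral is allowed. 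For (i), the hypothesis $J\in L^\infty(\Omega,L^1(\Omega))$ gives $h_0\in L^\infty(\Omega)$, so the multiplication term $h_0 I$ maps $L^p(\Omega)$ to $L^p(\Omega)$ and $\langle h_0 I u, v\rangle$ is finite; the hypothesis $J\in L^p(\Omega,L^{p'}(\Omega))$ together with Proposition \ref{prop1}(i) gives $K_J\in\mathcal{L}(L^p(\Omega),L^p(\Omega))$, so $\langle K_J u, v\rangle$ is finite as well. (Since $\mu(\Omega)<\infty$, one also has enough integrability on $J(x,y)u(y)v(x)$ to invoke Fubini on the double integral termwise.)

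**Next I would** symmetrize. In the first double integral, rename $x\leftrightarrow y$ and use $J(x,y)=J(y,x)$ to get $\int_\Omega\int_\Omega J(x,y)u(x)v(y)\,dy\,dx$; averaging this with the original expression,
\[
\int_\Omega\int_\Omega J(x,y)u(y)v(x)\,dy\,dx = \tfrac12\int_\Omega\int_\Omega J(x,y)\big(u(y)v(x)+u(x)v(y)\big)\,dy\,dx.
\]
Similarly, in the second double integral $\int_\Omega\int_\Omega J(x,y)u(x)v(x)\,dy\,dx$, the same renaming and symmetry of $J$ give $\int_\Omega\int_\Omega J(x,y)u(y)v(y)\,dy\,dx$, so this term equals $\tfrac12\int_\Omega\int_\Omega J(x,y)\big(u(x)v(x)+u(y)v(y)\big)\,dy\,dx$. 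Subtracting,
\[
\langle K_Ju - h_0 I u, v\rangle = -\tfrac12\int_\Omega\int_\Omega J(x,y)\big(u(x)v(x) + u(y)v(y) - u(x)v(y) - u(y)v(x)\big)\,dy\,dx,
\]
and the integrand factors as $(u(y)-u(x))(v(y)-v(x))$, which is exactly \eqref{green_eq_2}. Setting $p=p'=2$ and $v=u$ gives the stated quadratic form with the square $(u(y)-u(x))^2$.

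**The main obstacle** is purely the integrability bookkeeping needed to split the pairing into two separately convergent double integrals and to apply Fubini and the change of variables — once $J(x,y)u(y)v(x)$ and $J(x,y)u(x)v(y)$ are known to lie in $L^1(\Omega\times\Omega)$ (for which $J\in L^p(\Omega,L^{p'}(\Omega))$, $u\in L^p$, $v\in L^{p'}$, $\mu(\Omega)<\infty$ and the symmetry of $J$ together suffice), the algebraic symmetrization is immediate. I would state these integrability points briefly and then carry out the identity as above; no deeper difficulty is expected.
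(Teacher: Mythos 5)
Your proof is correct and follows essentially the same approach as the paper: both arguments rest on relabeling $x\leftrightarrow y$, the symmetry $J(x,y)=J(y,x)$, and Fubini's theorem, with the paper expanding the right-hand side down to $-2\int_\Omega\int_\Omega J(x,y)(u(y)-u(x))v(x)\,dy\,dx$ while you symmetrize the left-hand side upward. The integrability bookkeeping you flag is exactly what the hypotheses $J\in L^{\infty}(\Omega,L^1(\Omega))\cap L^p(\Omega,L^{p'}(\Omega))$ are there for, and your treatment of it is adequate.
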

%  \begin{prop}\label{Green's formulas}{\bf (Green's formulas)}$\;$
%	Let $\Omega$ satisfy that $\mu(\Omega)<\infty$. For $1\le q\le p <\infty$. If 
%	$J\in L^q(\Omega,L^{p'}(\Omega))$, satisfies that
%	 \begin{equation}\label{symmetric_property_J_Green}
%	 	J(x,y)=J(y,x),
%	 \end{equation} 
%	 then for 	$u\in L^p(\Omega)$ and $\;v\in L^{q'}(\Omega)$,
%	\begin{equation}\label{green_eq_2}
%		\langle K_Ju -h_0\,Iu ,\, v\rangle_{L^q,L^{q'}}=\displaystyle -\frac{1}
%		{2}\int_{\Omega}\int_{\Omega}J(x,y)(u(y)-u(x))(v(y)-v(x))dy\,dx.
%	\end{equation}
%	In particular, if $p=q=2$, we have that for $u\in L^2(\Omega)$
%	\begin{equation}\label{green_eq_3}
%		\langle K_Ju -h_0\,Iu ,\, u\rangle_{L^2,L^2}=\displaystyle -\frac{1}
%		{2}\int_{\Omega}\int_{\Omega}J(x,y)(u(y)-u(x))^2 dy\,dx.
%	\end{equation}
% \end{prop}
 \begin{proof} 
	 %%We denote the integral term of the right hand side of
%%\eqref {green_eq_2} by
Observe that 	
 \[ \begin{array}{ll}
	\displaystyle I_1 &\displaystyle =\int_{\Omega}\displaystyle \int_{\Omega}J
	(x,y)
	(u(y)-u(x))(v(y)-v(x))dy\,dx
	\smallskip \\
	 &\displaystyle=\!\int_{\Omega}\!\int_{\Omega}\!\!J(x,y)(u(y)\!-\!u(x))v(y)dydx-\!\!\int_
	{\Omega}\!\int_{\Omega}\!\!J(x,y)(u(y)\!-\!u(x))v(x)dydx.
	\end{array}
	\]
	Relabeling variables in the first term above, we obtain
	\[
	I_1\!\!=\!\!\int_{\Omega}\!\int_{\Omega}\!\!J(y,x)(u(x)-u(y))v(x)dx\,dy-\!
	\int_{\Omega}\!\int_{\Omega}\!\!J(x,y)(u(y)-u(x))v(x)dy\,dx.
	\] 
	Now, since $J(x,y)=J(y,x)$, 
	\[
	I_1=\!\!\int_{\Omega}\!\int_{\Omega}\!\!J(x,y)(u(x)-u(y))v(x)dxdy-\!\int_{\Omega}\!
	\int_{\Omega}\!\!J(x,y)(u(y)-u(x))v(x)dydx.
	\] 
	Thanks to Fubini's Theorem, we have 	that 
        \begin{equation} \label{aux_I1}
\begin{array}{ll}	
I_1 & \displaystyle =-2\int_{\Omega}\!\int_{\Omega}J(x,y)(u(y)-u(x))v(x)dy\,dx.
	\end{array}
        \end{equation}

%	Therefore, we have proved that the integral term of the right hand side 
%	of \eqref{green_eq_2} is equal to 
%	\begin{equation}\label{aux_I1}
%	\begin{array}{rl}
%	\displaystyle\int_{\Omega} \int_{\Omega}\!\!&\!\!J(x,y)(u(y)-u(x))(v(y)-v(x))
%	dy
%	\,dx \\
%	&\displaystyle =-2\int_{\Omega}\int_{\Omega}J(x,y)(u(y)-u(x))v(x)dy
%	\,dx.
%	\end{array}
%	\end{equation}
	
	 On the other hand, thanks to the hypothesis on $J$, $h_0\in L^{\infty}
	 (\Omega)$ and from Propositions \ref{prop1}  
	 we  have that $K_J-h_0I\in\mathcal{L}(L^p(\Omega))$, 
	 for all $1\le p\le\infty$. 
%	 Since $p\ge q$ and $\mu(\Omega)<\infty$, we have the 
%	continuous embedding 
%	 $ L^p(\Omega)\hookrightarrow L^q(\Omega)$.  
	Hence, if $u\in L^p(\Omega)$ and  $v\in L^{p'}(\Omega)$
	 \begin{equation}\label{aux2}
	 \begin{array}{ll}
	 \!\!\!\!\displaystyle \langle K_Ju \!-\!h_0\,Iu , v\rangle_{L^p,L^{p'}}\!\!\!\!
	 &\!\! =\!\!
	 \displaystyle\int_{\Omega}\!\!\left(\int_
	 {\Omega}\!\!\!J(x,y)u(y)dy\!-\!\int_{\Omega}\!\!\!J(x,y)dy\,u(x)\!\right)\!v(x)
	 \,dx
	 \smallskip\\
	 & \displaystyle=\int_{\Omega}\int_
	 {\Omega}J(x,y)(u(y)-u(x))v(x)dy\,dx.
	 \end{array}
	 \end{equation}
	 Hence, from \eqref{aux_I1} and \eqref{aux2}, we obtain  
	 \eqref{green_eq_2}. 
	 The second part of the proposition is an immediate consequence of 
	 \eqref{green_eq_2}.
 \end{proof}

%%%%%%%%%%%%%%%%%%%%%%%%%%%%%%%%%%%
%%%%%%%%%  spectrum of K-hI %%%%%%%%%%%%%%%%%
%%%%%%%%%%%%%%%%%%%%%%%%%%%%%%%%%%%

%\subsubsection{Spectrum of the operator $K-hI$}
%\label{properties_K-h}
%\subsection {The  of $K-hI$}

%Let $(\Omega,\mu,d)$ be a metric measure space.  
%\begin{itemize}
%	\item if $X=L^p(\Omega)$, with $1\le p\le \infty$, we assume 
%	 $h\in L^{\infty}(\Omega)$,
%	\item if $X=\mathcal{C}_b(\Omega)$, we assume 
%	$h\in\mathcal{C}_b(\Omega)$.
%\end{itemize}
Now  we describe the spectrum of $K-hI\in\mathcal{L}(X,X)$, where
$X=L^p(\Omega)$, with $1\le p\le \infty$, or
$X=\mathcal{C}_b(\Omega)$, and we prove that,  
under certain conditions on the operator $K$,  it is 
independent of $X$. Moreover, we give conditions on 
$J$ and $h$ under which the spectrum of $K_J-hI$ is nonpositive. For
this, we  start introducing some definitions used in the following
theorems. 
\begin{definition}
	If $T$ is a linear operator in a Banach space $Y$, a {\bf normal point} of $T$ 
	is any complex number which is in the resolvent set, or is an isolated 
	eigenvalue of $T$ of finite multiplicity. Any other complex number is in the 
	{\bf essential spectrum} of $T$.
\end{definition}

To describe the spectrum of $K-hI$, we use the following theorem that can be found 
in \cite[p. 136]{Henry}.

\begin{teo}\label{Henry}
	Suppose $Y$ is a Banach space, $T:D(T)\subset Y\rightarrow Y$ is a 
	closed linear operator, $S: D(S)\subset Y\rightarrow Y$ is linear 	with 
	$D(S)
	\supset D(T)$ and $\;S(\lambda_0-T)^{-1}$ is compact for some ~$
	\lambda_0\in\rho(T)$. Let $\;U$ be an open connected set in 
	$\mathbb{C}$ 
	consisting entirely of normal points of $\;T$, which are points of the 
	resolvent of $\;T,$ or isolated eigenvalues of $T$ of finite multiplicity. 
	Then either $U$ consists entirely of normal points of $\;T+S,$ or 
	entirely of eigenvalues of $\;T+S$.
\end{teo}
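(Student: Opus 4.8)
The plan is to reduce everything to the analytic (meromorphic) Fredholm alternative applied to the operator function $\lambda\mapsto N(\lambda):=I-S(\lambda-T)^{-1}$. First I would propagate the compactness hypothesis off the single point $\lambda_0$: the resolvent identity gives $(\lambda-T)^{-1}=(\lambda_0-T)^{-1}\bigl[I+(\lambda_0-\lambda)(\lambda-T)^{-1}\bigr]$ for $\lambda\in\rho(T)$, hence $S(\lambda-T)^{-1}=S(\lambda_0-T)^{-1}\bigl[I+(\lambda_0-\lambda)(\lambda-T)^{-1}\bigr]$ is compact for \emph{every} $\lambda\in\rho(T)$, and $\lambda\mapsto S(\lambda-T)^{-1}$ is analytic with values in the space of compact operators; consequently $N(\lambda)$ is a Fredholm operator of index $0$ for each $\lambda\in\rho(T)$.

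Next I would set up the factorisation. On $D(T+S)=D(T)$ one has $\lambda-(T+S)=N(\lambda)(\lambda-T)$ for $\lambda\in\rho(T)$, so, since $\lambda-T$ is boundedly invertible, $\lambda-(T+S)$ is Fredholm of index $0$ and is invertible \emph{iff} $N(\lambda)$ is, with $(\lambda-(T+S))^{-1}=(\lambda-T)^{-1}N(\lambda)^{-1}$ in that case. The set $V:=U\cap\rho(T)$ is open and connected: $U\setminus V$ consists precisely of the isolated eigenvalues of $T$ of finite multiplicity lying in $U$, a discrete subset of $U$, and removing a discrete set from a connected open subset of $\C$ preserves connectedness. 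Moreover, at each $\mu\in U\setminus V$ the resolvent $(\lambda-T)^{-1}$ is \emph{finitely meromorphic}: its pole has principal-part coefficients built from the finite-rank Riesz projection onto the generalised eigenspace of $\mu$, so $N(\lambda)$ extends to a finitely meromorphic Fredholm family on all of $U$.

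Then I would invoke the meromorphic Fredholm theorem on the connected domain $U$: either (a) $N(\lambda)$ is invertible at no point of $V$, or (b) $N(\lambda)$ is invertible on $V$ off a set $\Sigma$ discrete in $U$, with $N(\cdot)^{-1}$ again finitely meromorphic. In case (b), every $\lambda\in V\setminus\Sigma$ lies in $\rho(T+S)$, while at each point of $\Sigma$ \emph{and} each $\mu\in U\setminus V$ the resolvent $(\zeta-(T+S))^{-1}=(\zeta-T)^{-1}N(\zeta)^{-1}$ is finitely meromorphic, hence such a point is an isolated eigenvalue of $T+S$ of finite multiplicity, i.e.\ a normal point; so all of $U$ consists of normal points of $T+S$. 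In case (a), for $\lambda\in V$ the operator $\lambda-(T+S)=N(\lambda)(\lambda-T)$ is Fredholm of index $0$ and non-invertible, hence has nontrivial kernel, so $\lambda$ is an eigenvalue of $T+S$; and for $\mu\in U\setminus V$, $\mu-T$ is Fredholm of index $0$ and, $S$ being $T$-compact, so is $\mu-(T+S)$, which cannot be invertible (otherwise a neighbourhood of $\mu$ meeting $V$ would lie in $\rho(T+S)$), so $\mu$ too is an eigenvalue; thus all of $U$ consists of eigenvalues of $T+S$.

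The main obstacle is exactly the passage through $U\setminus V$, i.e.\ the isolated eigenvalues of $T$ inside $U$: one must check that $N(\lambda)$ is genuinely a \emph{finitely} meromorphic Fredholm family there, so that the meromorphic Fredholm theorem applies on all of $U$ and its exceptional set is discrete in $U$ (not merely in $V$), and that those points inherit the correct branch of the alternative. This is where one needs the facts that a $T$-compact perturbation preserves Fredholmness and index, and that a finite-rank pole of the resolvent is precisely the hallmark of a normal point; everything else is bookkeeping with the resolvent identity and the index-zero Fredholm theory of $I$ plus a compact operator.
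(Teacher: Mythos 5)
The paper does not prove this theorem; it is quoted from Henry \cite[p.~136]{Henry}, so there is no in-paper argument to compare against. Your proof is correct and is essentially the standard (indeed Henry's) route: the factorisation $\lambda-(T+S)=\bigl(I-S(\lambda-T)^{-1}\bigr)(\lambda-T)$ on $\rho(T)$, propagation of compactness by the resolvent identity, and the meromorphic Fredholm alternative on the connected set $U$, with the isolated eigenvalues of $T$ absorbed through the finite rank of their Riesz projections. The only points worth making explicit are that ``finite multiplicity'' must be read as finite \emph{algebraic} multiplicity (otherwise the principal part of $(\lambda-T)^{-1}$ at such a point need not be finite rank and the finitely meromorphic extension of $N(\lambda)$ fails), that $U\cap\sigma(T)$ is not only discrete but relatively closed in $U$ (an accumulation point inside $U$ would be a non-normal point of $T$, contradicting the hypothesis), and that $T+S$ is closed because $S$ is $T$-compact, which is what makes ``$\lambda\in\rho(T+S)$'' meaningful in your case~(b).
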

\begin{remark}
	If $S:Y\rightarrow Y$ is compact, Theorem \ref{Henry} implies that the 
	perturbation $S$ can not change the essential spectrum of $T$.
\end{remark}
The next theorem describes the spectrum of the operator $K-hI$ in 
$X$. Recall  that if $X=L^p(\Omega)$, with $1\le p\le \infty$, we
assume 	 $h\in L^{\infty}(\Omega)$, while if
$X=\mathcal{C}_b(\Omega)$, we assume   	$h\in\mathcal{C}_b(\Omega)$.

 \begin{teo}\label{espectro_K-h}
% 	 Let $(\Omega,\mu,d)$ be a metric measure space.
%	\begin{itemize}
%	\item If $X=L^p(\Omega)$, with $1\le p\le \infty$, we assume 
%	 $h\in L^{\infty}(\Omega)$.
%	\item If $X=\mathcal{C}_b(\Omega)$, we assume 
%	$h\in\mathcal{C}_b(\Omega)$.
%	\end{itemize}
	 If $K\in\mathcal{L}\left(X,X\right)$ is compact,  (see Proposition \ref
	 {K_compact}), then
	 $$\sigma(K-hI)=\overline {R(-h)}\cup \left\{\mu_n\right\}_{n=1}^{M},\; 
	 \mbox{ with }\;M\in\mathbb{N}\cup\{\infty\}.$$
	 If $M=\infty$, then $\; \left\{\mu_n\right\}_{n=1}^{\infty}\;$ is a sequence 
	 of eigenvalues of $K-hI$ with finite  multiplicity, that accumulates 
	 in $\overline {R(-h)}$.
 \end{teo}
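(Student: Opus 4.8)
First I would verify the hypotheses of that theorem in the space $Y = X$. Since $K \in \mathcal{L}(X,X)$ is compact, the operator $S = K$ is bounded on all of $Y$, so $D(S) = Y \supset D(T)$, and for any $\lambda_0 \in \rho(-hI)$ the operator $S(\lambda_0 - T)^{-1} = K(\lambda_0 + hI)^{-1}$ is compact, being the composition of the bounded operator $(\lambda_0 + hI)^{-1}$ with the compact operator $K$. By Proposition \ref{spectrum_resolvent_hI}, $\sigma_X(-hI) = \overline{R(-h)}$ and this is independent of $X$; hence $\rho(-hI) = \mathbb{C} \setminus \overline{R(-h)}$ is open, and every point of it is a normal point of $T = -hI$ (in fact a resolvent point).

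\textbf{Second, I would decompose $\mathbb{C} \setminus \overline{R(-h)}$ into its connected components $\{U_k\}$.} Each $U_k$ is open, connected, and consists entirely of normal points of $T = -hI$. Theorem \ref{Henry} then applies to each $U_k$: either $U_k$ consists entirely of normal points of $T + S = K - hI$, or $U_k$ consists entirely of eigenvalues of $K - hI$. The second alternative can be ruled out for any given $U_k$ as follows: pick $\lambda \in U_k$ with $|\lambda|$ larger than $\|K\|_{\mathcal{L}(X,X)} + \|h\|_\infty$ (such $\lambda$ exists in the unbounded component; for a bounded component one argues instead that an open set of eigenvalues of a bounded operator is impossible since the spectrum, viewed via the compact-perturbation structure, cannot fill an open set — more precisely $K - hI$ has $\sigma$ contained in a ball, so no component of eigenvalues can be unbounded, and a bounded open set of eigenvalues contradicts the fact that eigenvalues off $\overline{R(-h)}$ are isolated, which is itself what Theorem \ref{Henry} gives on the first alternative). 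Cleanest: on the unbounded component, points of large modulus lie in $\rho(K - hI)$, so that component falls under the first alternative; hence its points are normal for $K - hI$, i.e. each is either in $\rho(K-hI)$ or an isolated eigenvalue of finite multiplicity.

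\textbf{Third, I would assemble the description of $\sigma(K - hI)$.} From the above, $\sigma(K - hI) \setminus \overline{R(-h)}$ consists only of isolated eigenvalues of finite multiplicity. Since $\sigma(K - hI)$ is closed and bounded (as $K - hI$ is bounded), the set of such eigenvalues lying outside $\overline{R(-h)}$ is bounded; being a set of isolated points in the closed set $\sigma(K-hI)$ whose only possible accumulation points lie in $\overline{R(-h)}$, it is at most countable. Write it as $\{\mu_n\}_{n=1}^M$ with $M \in \mathbb{N} \cup \{\infty\}$. If $M = \infty$, the sequence $\{\mu_n\}$ is bounded and has all its accumulation points in $\overline{R(-h)}$ (since each $\mu_n$ is isolated in $\sigma(K-hI)$, no subsequence can accumulate at another $\mu_m$, nor outside $\sigma(K-hI)$). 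Finally one must check $\overline{R(-h)} \subset \sigma(K - hI)$: since $hI$ and $K$ commute is false in general, but one uses the remark after Theorem \ref{Henry} — $K$ compact cannot alter the essential spectrum of $-hI$, and every point of $\overline{R(-h)}$ that is not an isolated eigenvalue of finite multiplicity of $-hI$ lies in $\sigma_{ess}(-hI) = \sigma_{ess}(K - hI) \subset \sigma(K - hI)$; the (at most countably many, isolated, finite-multiplicity) points of $\overline{R(-h)}$ not of this type are limits of essential-spectrum points and hence also in the closed set $\sigma(K-hI)$. This yields $\sigma(K - hI) = \overline{R(-h)} \cup \{\mu_n\}_{n=1}^M$ as claimed.

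\textbf{The main obstacle} I expect is the bounded-component bookkeeping: ruling out that a bounded connected component of $\mathbb{C} \setminus \overline{R(-h)}$ consists entirely of eigenvalues of $K - hI$, and more delicately handling points of $\overline{R(-h)}$ which happen to be isolated finite-multiplicity eigenvalues of $-hI$ (so that $U$ in Theorem \ref{Henry} may be taken to include them). The clean route is to take $U = \mathbb{C} \setminus \sigma_{ess}(-hI)$ as the open set of normal points, apply Theorem \ref{Henry} component-by-component, use the large-$|\lambda|$ argument on the single unbounded component to force the ``normal points'' alternative there, and observe that a bounded component $U_k$ cannot consist entirely of eigenvalues because $\sigma(K-hI)$ is bounded yet $\partial U_k$ meets $\sigma_{ess}(-hI) = \sigma_{ess}(K-hI)$, so by the first alternative on the unbounded component and continuity of spectra under the structure of Theorem \ref{Henry}, the eigenvalues are isolated — contradicting an open set of them. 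Keeping track of which of these isolated eigenvalues happen to coincide with points of $\overline{R(-h)}$ versus lie strictly outside is the fiddly part, but it does not affect the final formula since those absorbed into $\overline{R(-h)}$ simply are not listed among the $\mu_n$.
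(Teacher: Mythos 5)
Your proof is correct and follows the paper's strategy: both halves rest on Theorem \ref{Henry}, first with $T=-hI$, $S=K$ to show that every point off $\overline{R(-h)}$ is a normal point of $K-hI$ (ruling out the ``all eigenvalues'' alternative by boundedness of $\sigma(K-hI)$, exactly as the paper does), and then to obtain $\overline{R(-h)}\subset\sigma(K-hI)$ --- in your case via the Remark on invariance of the essential spectrum, in the paper's case via an explicit contradiction with $T=K-hI$, $S=-K$ on a small ball $B_{\varepsilon}(\tilde\lambda)\subset\rho(K-hI)$. Two simplifications are worth noting. First, the component bookkeeping you single out as the main obstacle evaporates: $h$ is real-valued, so $\overline{R(-h)}$ is a compact subset of $\mathbb{R}$ and $U=\mathbb{C}\setminus\overline{R(-h)}$ is a single connected unbounded open set; there are no bounded components to worry about. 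Second, in the inclusion $\overline{R(-h)}\subset\sigma(K-hI)$, your case of ``isolated finite-multiplicity eigenvalues of $-hI$'' is vacuous, since Proposition \ref{spectrum_resolvent_hI} shows every eigenvalue of $hI$ has infinite multiplicity; hence no point of $\overline{R(-h)}$ is a normal point of $-hI$ and all of $\overline{R(-h)}$ is essential spectrum. This is fortunate, because the justification you offer for that case (that such points would be limits of essential-spectrum points) would fail at an isolated point of $\overline{R(-h)}$. The paper's version of this half makes the same ingredients explicit: a ball of normal points of $K-hI$ around $\tilde\lambda$ would, by Theorem \ref{Henry} (the ball cannot consist entirely of eigenvalues of $-hI$, which lie on the real line), consist of normal points of $-hI$, forcing $\tilde\lambda$ to be an isolated eigenvalue of $-hI$ of finite multiplicity, contradicting the infinite multiplicity.
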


\begin{proof}
	 With the notations of Theorem \ref{Henry}, we consider the operators 
	 $$\,S=K\;\mbox{ and }\; T=-hI.$$ 
	 First of all, we prove that $\mathbb{C}\setminus\overline {R (-h)}\subset 
	 \rho(K-hI)$. We choose the set $U$ in Theorem \ref{Henry} as
	 $$U=\rho(-hI)=\rho(T)=\mathbb{C}\setminus\overline {R (-h)}$$
         which is an   open, connected set. Since $U=\rho(T)$, 
	every $\lambda\in U$ is a normal point of $T$. 

	 On the other hand, if $\lambda_0\in\rho(T)$, then $(T-\lambda_0)^{-1}\in
	 \mathcal{L}(X, X)$, and $S=K$ is compact. Then, we 
	 have that $S(\lambda_0-T)^{-1}\in\mathcal{L}(X,X)$ 
	 is compact. Thus, all the hypotheses of Theorem \ref{Henry} are satisfied. 
	 Now, thanks to Theorem \ref{Henry}, we have that $U=\mathbb{C}\setminus
	 \overline {R(-h)}$ consists entirely of eigenvalues of $T+S=K-hI$ or $U$ 
	 consists entirely of normal points of $T+S=K-hI$.
	 
	If $U=\mathbb{C}\setminus\overline {R(-h)}$ consists entirely of 
	eigenvalues of $T+S=K-hI$, we arrive to contradiction, because the 
	spectrum of $K-hI$ is bounded. So $U=\mathbb{C}\setminus\overline 
	{R(-h)}$ has to consist entirely of normal points of ~$T+S$. Then, they are 
	points of the resolvent or isolated eigenvalues of $\;T+S=K-hI$. Since any 
	set of isolated points in $\mathbb{C}$ is a finite set, or a numerable set, 
	%%%%%%%%%%%%%%%%%%%%%%%%%%%%%%%%%%%%%%
	%                												     %
	%  		Demostracion de que cualquier conjunto de puntos aislados        %
	%           		in $\mathbb{C}$ es finito o numerable 				     %
	%                                                 									     %
	%%%%%%%%%%%%%%%%%%%%%%%%%%%%%%%%%%%%%%
%	\\mathcal{C}onsideramos $\mathbb{C}=\R^2$. Denotamos por  $A$ un conjunto de 
%	puntos aislados. Sea $x\in A$, entonces existe $K_x\in \R$ tal que $B_{1/
%	K_x}(x)\cap A=x$. Sea $h=\inf\limits_{x\in A}{1/K_x}$ ($h$ es 
%	estrictamente positivo, pues si $h= 0$, entonces existe una 
%	subsucesión que converge a $x$, y $x$ es un punto aislado). \\ 
%	Construimos el siguiente recubrimiento de $\mathbb{C}$:\\
%	Tomamos $x\in A$, y consideramos la bola $[x-h,x+h)\times[x-h,x+h)$, y 
%	considermos todas sus traslaciones, de manera que las bolas sean 
%	disjuntas entre sí y recubran todo $\mathbb{C}$. Por tanto, hemos 
%	construído un recubrimiento numerable de $\mathbb{C}$, tal que cada bola 
%	contiene un único punto de $A$. Y como consecuencia $A$ es numerable.\\
	%%%%%%%%%%%%%%%%%%%%%%%%%%%%%%%%%%%%%%
	%                												     %
	%  		Fin de la demostracion de que cualquier conjunto de puntos        % 	
	%								aislados                                              %
	%           		in $\mathbb{C}$ es finito o numerable 				     %
	%                                                 									     %
	%%%%%%%%%%%%%%%%%%%%%%%%%%%%%%%%%%%%%%
	we 	have that the isolated eigenvalues are 
	 $$\left\{\mu_n\right\}_{n=1}^{M},\; \mbox{ with }\;M\in\mathbb{N}\; 
	 \mbox{ or } \;M=\infty.$$
	 Moreover, since the spectrum of $K-hI$  is bounded, if $M=\infty$ then 
	 $\;  \left\{\mu_n\right\}_{n=1}^{\infty}\;$ is a  sequence of eigenvalues 
	 of $K-hI$ with finite multiplicity, that accumulates  in $\overline {R(-h)}$. 
	  
	Now we prove that $\overline{R(-h)}\subset\sigma(K-hI)$. We argue 
	by contradiction. Suppose that there exists a $\tilde{\lambda}\in 
	\overline {R(-h)}$ that belongs to $\rho (K-hI).$ Since the resolvent set 
	is open, 
	there exists a ball $B_{\varepsilon}(\tilde{\lambda})$ centered in $\tilde
	{\lambda},$ that is into the resolvent of $K-hI$.
%	\begin{figure} [htp]
%	\begin {center}
%	\includegraphics [height=2cm]{intevalo}
%	\end {center}
%	\end{figure}
	 Then $U=B_{\varepsilon}(\tilde{\lambda})$ is an open connected set that 
	 consists of normal points of $K-hI$. With the notation 
	 of Theorem  \ref{Henry}, we consider  
	 the operators
	 $$T=K-hI\;\;\mbox{ and }\;\;S=-K $$
	and the open, connected set 
	$$U=B_{\varepsilon}(\tilde{\lambda}).$$
	 Arguing like in the previous case, if $\lambda_0\in\rho(T)$, we have that $S
	 (\lambda_0-T)^{-1}$ is compact, thus the hypotheses of  Theorem \ref
	 {Henry} are satisfied. Hence
	 $U=B_{\varepsilon}(\tilde{\lambda})$ consists entirely of eigenvalues of 
	  $T+S=-hI$ or $U=B_{\varepsilon}(\tilde{\lambda})$ consists entirely of normal 
	 points of $T+S=-hI$.
	 
	 If $U=B_{\varepsilon}(\tilde{\lambda})$ consists 
	 entirely of eigenvalues of $T+S=-hI$, we would arrive to contradiction, 
	 because the eigenvalues of $-hI$ are only inside $\overline
	 {R(-h)}$,  and 
	 the ball $B_{\varepsilon}(\tilde{\lambda})$ is not inside $
	 \overline {R(-h)}$. 
	 So $U= B_{\varepsilon}(\tilde{\lambda})$ has to consist of 
	 normal points of  $T+S=-hI$, so they are points of the resolvent of $-hI
	 $ or 
	 isolated eigenvalues of 
	 finite multiplicity of $-hI$. Since $\rho(-hI)=\mathbb{C}\setminus \overline {R
	 (-h)}$, and  $\tilde{\lambda}\in \overline {R(-h)}$, we have that $\tilde
	 {\lambda}$ has to be an isolated eigenvalue of $-hI$, with finite multiplicity. 
	 But from Proposition \ref{spectrum_resolvent_hI}, we know that the 
	 eigenvalues of $-hI$ have infinity multiplicity. Thus, we arrive to  
	 contradiction. Hence, we have proved that $\overline{R(-h)}\subset\sigma(K-
	 hI)$. With this, we have finished the proof of the theorem.
\end{proof}

%\begin{prop}\label{sigma_independiente}  Let $(\Omega,\mu,d)$ be a measurable 
% metric space with $\mu(\Omega)<\infty$.
%	\begin{enumerate}
%	\renewcommand{\labelenumi}{\roman{enumi}.}
%	\item For $1\le p_0< p_1<\infty$, if $K\in\mathcal{L}(L^{p_0}(\Omega),L^{p_1}
%	(\Omega))$  and $K\in\mathcal{L}(L^{p_0}(\Omega),L^{p_0}(\Omega))$ is 
%	compact then $K\in\mathcal{L}(L^{p}(\Omega),L^{p}(\Omega))$, $\forall p\in
%	[p_0,p_1]$, and  $\sigma_{L^p}(K)$ is independent of $\;p\;$.
%	\item For $1\le p_0< p_1\le\infty$, if $K\in\mathcal{L}(L^{p_0}(\Omega),L^{p_1}
%	(\Omega))$ is compact, then $K\in\mathcal{L}(L^{p}(\Omega),L^{p}(\Omega))
%	$, $\forall p\in[p_0,p_1]$, and $\sigma_{L^p}(K)$ is independent of ~$p$.
%	\item For a fixed $1\le p_0\le\infty $. Let $\Omega$ be bounded,  if $K\in
%	\mathcal	{L}(L^{p_0}(\Omega),\mathcal{C}_b(\Omega))$ is compact and $X=\mathcal{C}_b
%	(\Omega)$ or $X=L^r(\Omega)$ with $r\in[p_0,\infty]$, then $\sigma_{X}(K)$ is 
%	independent of $X$.
%	\end{enumerate}
%\end{prop}

In the following proposition we give conditions that guarantee  that  the spectrum of 
$K-hI$ is independent of $X=L^p(\Omega)$ with $1\le p\le \infty$, or $X=
\mathcal{C}_b(\Omega)$.
\begin{prop}\label{independent_spectrum_K_h}
	Let 
%	$(\Omega,\mu, d)$ be a metric measure space with 
	$\mu(\Omega)<\infty$. 
	\begin{enumerate}
	\renewcommand{\labelenumi}{\roman{enumi}.}
	\item Assume,  for some  $1\!\le\! p_0\!<\! p_1\!<\!\infty$,
          $K\!\in\!\mathcal{L}(L^{p_0}(\Omega),L^{p_1} 
	(\Omega))$, $K\!\in\!\mathcal{L}(L^{p_0}(\Omega),L^{p_0}
	(\Omega))$ is 
	compact and $h\in L^{\infty}(\Omega)$. Then  $K-hI\in\mathcal{L}(L^{p}
	(\Omega),L^{p}(\Omega))$, $\forall p\in
	[p_0,p_1]$, and  $\sigma_{L^p}(K-hI)$ is independent of $\;p\;$.
	\item Assume, for some  $1\le p_0< p_1\le\infty$,  $K\in\mathcal{L}(L^{p_0}(\Omega),
	L^{p_1}	(\Omega))$ is compact  and $h\in
        L^{\infty}(\Omega)$. Then  $K-hI\in\mathcal
	{L}(L^{p}(\Omega),L^{p}(\Omega))$, $\forall p\in[p_0,p_1]$, and 
	$\sigma_{L^p}
	(K-hI)$ is independent of ~$p$.
	\item Assume, for  some  $1\le p_0\le\infty $,
          $K\in\mathcal{L}(L^{p_0}(\Omega),  
	\mathcal{C}_b(\Omega))$ is compact and $X=\mathcal{C}_b
	(\Omega)$ or $X=L^r(\Omega)$ with $r\in[p_0,\infty]$, and $h\in 
	\mathcal{C}_b(\Omega)	$. Then  $K-hI\in\mathcal{L}(X,X)$ and
        $\sigma_{X}(K-hI)$ is independent  
	of $X$.
	\end{enumerate}
\end{prop}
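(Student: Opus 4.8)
The strategy is to reduce each case to the corresponding statement about $\sigma_X(K)$ already established in Proposition~\ref{sigma_independiente}, combined with the spectral description of $K-hI$ from Theorem~\ref{espectro_K-h}. The key observation is that multiplication by $h$ is a bounded operator on each of the relevant spaces: since $\mu(\Omega)<\infty$, if $X=L^p(\Omega)$ with $p\in[p_0,p_1]$ then $h\in L^\infty(\Omega)$ gives $hI\in\mathcal{L}(L^p(\Omega),L^p(\Omega))$ with the same $h$ independently of $p$, and likewise $h\in\mathcal{C}_b(\Omega)$ gives $hI\in\mathcal{L}(\mathcal{C}_b(\Omega),\mathcal{C}_b(\Omega))$. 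So in each case $K-hI\in\mathcal{L}(X,X)$ is immediate once we know $K\in\mathcal{L}(X,X)$, and the latter is precisely what Proposition~\ref{sigma_independiente} (via Proposition~\ref{compacidad_cobos}) provides — in case~i.\ for all $p\in[p_0,p_1]$, in case~ii.\ after the embedding trick $L^{p_1}\hookrightarrow L^{p_0}\xrightarrow{K}L^{p_1}$, and in case~iii.\ for $X=\mathcal{C}_b(\Omega)$ or $X=L^r(\Omega)$, $r\in[p_0,\infty]$.

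For the independence of the spectrum itself I would argue as in Proposition~\ref{sigma_independiente}, i.e.\ directly on eigenfunctions rather than trying to transport resolvents. By Theorem~\ref{espectro_K-h}, $\sigma_X(K-hI)=\overline{R(-h)}\cup\{\mu_n\}$, where $\overline{R(-h)}$ is manifestly independent of the space (it depends only on $h$ as a function), so it suffices to show the isolated eigenvalues $\mu_n\notin\overline{R(-h)}$ are the same in all the spaces. Fix such a $\mu\in\sigma_{X}(K-hI)\setminus\overline{R(-h)}$ with eigenfunction $\Phi$, so $K\Phi-h\Phi=\mu\Phi$, i.e.\ $K\Phi=(h+\mu)\Phi$. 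In case~i.\ and ii., if $\Phi\in L^{p}(\Omega)$ for some $p\in[p_0,p_1)$, then $\Phi\in L^{p_0}(\Omega)$ (finite measure), $K:L^{p_0}\to L^{p_1}$ gives $K\Phi\in L^{p_1}(\Omega)$, and since $(h+\mu)^{-1}=(h-(-\mu))^{-1}\in L^\infty(\Omega)$ — exactly because $-\mu\notin\overline{R(h)}$, equivalently $\mu\notin\overline{R(-h)}$ — we recover $\Phi=(h+\mu)^{-1}K\Phi\in L^{p_1}(\Omega)$; conversely any $L^{p_1}$-eigenfunction lies in every $L^p$, $p\in[p_0,p_1]$, because $\mu(\Omega)<\infty$. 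Hence the eigenvalue sets coincide. In case~iii.\ the same bootstrap along $\mathcal{C}_b(\Omega)\hookrightarrow L^r(\Omega)\hookrightarrow L^{p_0}(\Omega)\xrightarrow{K}\mathcal{C}_b(\Omega)$ shows an $L^r$-eigenfunction is automatically continuous and bounded (again using that $(h+\mu)^{-1}\in\mathcal{C}_b(\Omega)$ since $h$ is continuous and $h+\mu$ is bounded away from $0$), and trivially a $\mathcal{C}_b$-eigenfunction lies in every $L^r(\Omega)$.

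The one point that needs a little care — and which I expect to be the main obstacle — is justifying that $(h+\mu)^{-1}$ has the required boundedness/continuity: this rests on the equivalence ``$\mu\notin\overline{R(-h)}$ $\iff$ $h+\mu$ is bounded away from zero'', which is exactly the content of Proposition~\ref{spectrum_resolvent_hI} applied to the shift, so I would simply cite it. Once that is in hand, the bootstrap is routine, and assembling the three cases gives $\sigma_X(K-hI)$ independent of $X$ as claimed. I would write the three parts in parallel, proving the eigenfunction-regularity bootstrap once in a form general enough to cover all of them and then specializing.
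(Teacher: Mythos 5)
Your proposal is correct and follows essentially the same route as the paper: boundedness of $K-hI$ on each space from the multiplication operator plus the compactness transfer of Proposition~\ref{sigma_independiente}, the decomposition $\sigma_X(K-hI)=\overline{R(-h)}\cup\{\mu_n\}$ from Theorem~\ref{espectro_K-h}, and the eigenfunction bootstrap $\Phi=(h+\lambda)^{-1}K\Phi$ with $(h+\lambda)^{-1}\in L^{\infty}(\Omega)$ (resp.\ $\mathcal{C}_b(\Omega)$) precisely because $\lambda\notin\overline{R(-h)}$. The paper's proof is the same argument, including the two-directional comparison of eigenvalue sets via the finite-measure embeddings.
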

\begin{proof}
	Following the same arguments in Proposition \ref{sigma_independiente}, we 
	have that in any of the cases {\it i.}, {\it ii.}, or {\it iii.},  $K\in\mathcal{L}(X,X)$ 
	is compact, where $X=L^p(\Omega)$ with $p_0\le p\le p_1$ for the cases {\it i.} and {\it ii.}, and $X=L^p(\Omega)$ with $p_0\le p\le \infty$, or $X=
\mathcal{C}_b(\Omega)$ for the case {\it iii.}. 
Then, from Theorem \ref{espectro_K-h} we have that 
	$$\sigma_{X}(K-hI)=
	\overline{R(-h)}\cup \left\{\mu_n\right\}_{n=1}^{M},\;\mbox{ with }\; M\in
	\mathbb{N} \;\mbox{ with }\; or M=\infty,$$ 
	where $\left\{\mu_n\right\}_{n}$ are eigenvalues of $K-hI$, with finite multiplicity 
	  $\forall n \in
	\{1,\dots,M\}$.
	
	Since $\overline{R(-h)}$ is independent of $X$,
	we just have to prove that the eigenvalues $\lambda\in\sigma_{X}(K-hI)$ 
	satisfying that $\lambda\notin\overline{R(-h)}$ are independent of $X$. 
	Let $\lambda\in\sigma_{X}(K-hI)$ be an eigenvalue such that $\lambda\notin
	\overline{R(-h)}$. We denote by $\Phi$ an eigenfunction associated to 
	$\lambda\in\sigma_{X}(K-hI)$, then
	\begin{equation}\label{equality_eigenfunstion_spectr_K_h}
		K\Phi(x)-h(x)\Phi(x)=\lambda \Phi(x)
	\end{equation}
	Since $\lambda\notin\overline{R(-h)}$, then from  
	\eqref{equality_eigenfunstion_spectr_K_h} we obtain
	\begin{equation}\label{equality_eigenfunstion_spectr_K_h_2}		
	\begin{array}{rcl}
	\Phi(x)&= & \displaystyle\frac{1}{h(x)+\lambda}K\Phi(x)
	\end{array}
	\end{equation}
	and  $\frac{1}{h
	(\cdot)+\lambda}\in L^{\infty}(\Omega)$.

      For the cases  {\it i.}  and {\it ii.}, thanks to the hypotheses on $K$, we have 
	\begin{equation}\label{equality_eigenfunstion_spectr_K_h_3}
		\displaystyle\frac{1}{h(\cdot)+\lambda}K \in\mathcal{L}(L^{p_0}
		(\Omega),L^{p_1}(\Omega)).
	\end{equation}

Now assume $\lambda\in \sigma_{L^{p_1}}(K-hI)$ is  an
      eigenvalue with associated eigenfunction
      $\Phi\in L^{p_1}(\Omega)$. Since $\mu 
	(\Omega)<\infty$ we have that  $L^{p_1}(\Omega)
	\hookrightarrow L^p(\Omega)$ for all $p\le p_0$, then $\Phi\in
        L^p(\Omega)$ and $\lambda\in \sigma_{L^{p}}(K-hI)$ for all $p\in
	[p_0,p_1]$. 
	
On the other hand, if  $\lambda\in \sigma_{L^{p}(\Omega)}(K-hI)$ is an eigenvalue, with 
	 an  associated eigenfunction $\Phi\in L^{p}(\Omega)$, with
         $p\in [p_0,p_1)$, then by
         $L^{p}(\Omega)\hookrightarrow L^{p_0}(\Omega)$ and 
         \eqref{equality_eigenfunstion_spectr_K_h_3}, we have that  
	 $\frac{1}{h(\cdot)+\lambda}K\Phi \in L^{p_1}(\Omega)$. Hence, from 
	 \eqref{equality_eigenfunstion_spectr_K_h_2}, we obtain that 
	 $\Phi\in L^{p_1}(\Omega)$. Therefore, $\Phi\in L^p(\Omega)$ for $p\in 
	 [p_0,p_1]$, and  the spectrum is independent of the space
         in  cases {\it i.}  and {\it ii.}.
	 	 
	 The case {\it iii.} is analogous to the previous result, using that  
	  $h\in \mathcal{C}_b(\Omega)$ and $\lambda\not\in 
	 \overline{R(-h)}$, then
	\begin{displaymath}  %%\label{equality_eigenfunstion_spectr_K_h_3_cont}
		\displaystyle\frac{1}{h(\cdot)+\lambda}K\Phi  \in\mathcal{L}(L^{p_0}
		(\Omega),\mathcal{C}_b(\Omega)).
	\end{displaymath}
	 Thus, the result.
\end{proof}

%\begin{remark}
%Consider that $\Omega$ is bounded, $K$ is compact and $h$ is invertible, this is 
%$0\notin$R$(h)$, then the operator $K-hI$ is a Fredholm operator of index $0$, 
%because it is a compact perturbation of an invertible operator. So we know that the 
%spectrum $\sigma(K-hI)$ is composed only by eigenvalues (see \cite{Brezis}) and 
%they are bounded since the operator is continuous. 
%\end{remark}

The following results give conditions for  the spectrum of 
$K_J-hI$ to be  nonpositive. 
%The hypothesis in the following result can be verified with 
%Proposition \ref{K_compact}.

\begin{corolario}\label{spectrum_negative}
	Let  
%	$(\Omega,\mu, d)$ be a metric measure space with 
	$\mu(\Omega)<\infty$. Assume for some  $1\le p_0\le 2$,
        $K_J\in\mathcal{L}(L^{p_0}(\Omega),X)$ is compact with
        $X=L^p(\Omega)$, with $p_0\le p\le \infty$, or
        $X=\mathcal{C}_b(\Omega)$ 
%
%	\begin{itemize}
%	\item If $X=L^p(\Omega)$, with $p_0\le p\le \infty$, we assume $h\in L^
%	{ \infty}\Omega$.
%	\item If $X=\mathcal{C}_b(\Omega)$, we assume $h\in \mathcal{C}_b
%	(\Omega)$.
%	\end{itemize}
  and $J$ satisfies that 
	$$J(x,y)=J(y,x) . $$
	Then 
	\begin{enumerate}
	\renewcommand{\labelenumi}{\roman{enumi}.}
	\item If $h\equiv c $, with $c\in \mathbb{R}$ such that $c>r(K_J)$, 
	where $r(K_J)$ is the spectral radius of $K_J$ then $\sigma_X(K_J-hI)
	$ is real and 	nonpositive.
	\item If $J\in L^{\infty}(\Omega, L^1(\Omega))$ and
          $h(x)=h_0(x)=\displaystyle\int_{\Omega}J(x,y)dy\in L^{\infty} 
	(\Omega)$,  	satisfies that  
	$h_0(x)\geq \alpha>0$ for all $x\in \Omega$, then $\sigma_X(K_J-h_{0}I)$ is 
	nonpositive and $0$ is an isolated eigenvalue with finite 
	multiplicity. Moreover if $J$ satisfies that 
	$$J(x,y)>0,\;\forall x,\,y\in\Omega\; \mbox{such that}\; d(x,y)<R$$
	and $\Omega$ is $R$--connected, then $\{0\}$ is a simple
        eigenvalue with only constant eigenfunctions. 
	
	\item If $h\in L^{\infty}(\Omega)$ satisfies that $\;h\ge h_0\,$ in $\,
	\Omega$,
	then $\sigma_X(K_J-hI)$ is nonpositive.
	\end{enumerate}
\end{corolario}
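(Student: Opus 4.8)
The three parts all reduce to estimating a quadratic form on $L^2(\Omega)$. Under the standing assumptions $J(x,y)=J(y,x)$ and $\mu(\Omega)<\infty$, the operator $K_J-hI$ is bounded and selfadjoint on $L^2(\Omega)$ (this is implicit in Proposition \ref{adjoint_operator} and in the computation carried out in Proposition \ref{Green's formulas}), so $\sigma_{L^2}(K_J-hI)$ is real and contained in $\bigl(-\infty,\ \sup_{\|u\|_{L^2}=1}\langle(K_J-hI)u,u\rangle_{L^2}\bigr]$; moreover, by the compactness hypothesis and Proposition \ref{independent_spectrum_K_h}, $\sigma_X(K_J-hI)$ does not depend on the admissible space $X$. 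Hence in every case it suffices to work in $L^2(\Omega)$. For part \textit{i} this is immediate: since $K_J$ is compact and selfadjoint on $L^2(\Omega)$, Proposition \ref{sigma(K)} gives that its spectrum is real and, lying in the disc of radius $r(K_J)$, is contained in $[-r(K_J),r(K_J)]$; therefore
\[
\sigma(K_J-cI)=\sigma(K_J)-c\subset[-r(K_J)-c,\,r(K_J)-c]\subset(-\infty,0),
\]
because $c>r(K_J)$, and by space-independence the same holds in every $X$.

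For part \textit{ii}, I would invoke Green's formula (Proposition \ref{Green's formulas}, whose hypotheses hold since $J\in L^\infty(\Omega,L^1(\Omega))$ and $\mu(\Omega)<\infty$): for all $u\in L^2(\Omega)$,
\[
\langle(K_J-h_0I)u,u\rangle_{L^2(\Omega)}=-\tfrac12\int_\Omega\int_\Omega J(x,y)\bigl(u(y)-u(x)\bigr)^2\,dy\,dx\le0,
\]
since $J\ge0$; hence $\sigma_X(K_J-h_0I)\subset(-\infty,0]$. On the other hand the constant function $1$ satisfies $K_J1=h_0$, so $(K_J-h_0I)1=0$ and $0\in\sigma_X(K_J-h_0I)$. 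Since $h_0\ge\alpha>0$ we have $\overline{R(-h_0)}\subset(-\infty,-\alpha]$, so Theorem \ref{espectro_K-h} shows that in a neighbourhood of $0$ the spectrum of $K_J-h_0I$ consists only of isolated eigenvalues of finite multiplicity; thus $0$ is such an eigenvalue.

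For the last assertion of part \textit{ii}, let $u$ be any eigenfunction for the eigenvalue $0$. Then $\langle(K_J-h_0I)u,u\rangle_{L^2}=0$, so by the Green identity $J(x,y)(u(y)-u(x))^2=0$ for a.e.\ $(x,y)$, and since $J(x,y)>0$ whenever $d(x,y)<R$ this forces $u(x)=u(y)$ for a.e.\ pair $(x,y)$ with $d(x,y)<R$. I would then show that this local essential constancy, together with $R$-connectedness, makes $u$ a.e.\ constant: choosing $x_0$ for which $u\equiv u(x_0)=:c_0$ a.e.\ on $B(x_0,R)$, the set $E:=\{u=c_0\}$ has positive measure because $\mu(B(x_0,R))>0$; if $F:=\Omega\setminus E$ also had positive measure, then $J(x,y)=0$ for a.e.\ $(x,y)\in E\times F$, and comparing with the essential supports $P(\chi_E),P(\chi_F)$ of Definition \ref{definition_positive_set} — using that every ball has positive measure, so that $\Omega=P(\chi_E)\cup P(\chi_F)$ and the two sets are disjoint once they are at positive distance — one deduces $\mathrm{dist}(P(\chi_E),P(\chi_F))\ge R$, which is incompatible with an $R$-chain (Definition \ref{R_connected}) joining a point of $P(\chi_E)$ to a point of $P(\chi_F)$. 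Hence $u$ is constant, $\ker(K_J-h_0I)$ is one-dimensional, and $0$ is simple with constant eigenfunctions. This connectedness step is the point that requires the most care.

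Finally, for part \textit{iii} we have $0\le h_0\le h$ with $h\in L^\infty(\Omega)$, so $h_0\in L^\infty(\Omega)$ and Green's formula still applies; splitting $K_J-hI=(K_J-h_0I)-(h-h_0)I$, for $u\in L^2(\Omega)$
\[
\langle(K_J-hI)u,u\rangle_{L^2(\Omega)}=-\tfrac12\int_\Omega\int_\Omega J(x,y)\bigl(u(y)-u(x)\bigr)^2\,dy\,dx-\int_\Omega\bigl(h(x)-h_0(x)\bigr)|u(x)|^2\,dx,
\]
and both terms are $\le0$; hence $\sigma_{L^2}(K_J-hI)\subset(-\infty,0]$, and by the space-independence of the spectrum the same holds in every admissible $X$.
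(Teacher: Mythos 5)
Your proposal is correct and follows essentially the same route as the paper: reduce to $L^2(\Omega)$ via selfadjointness and the space-independence of the spectrum (Proposition \ref{independent_spectrum_K_h}), use the shift $\sigma(K_J)-c$ for part \emph{i}, the Green formula of Proposition \ref{Green's formulas} together with Theorem \ref{espectro_K-h} for part \emph{ii}, and the splitting $\langle(K_J-hI)u,u\rangle=\langle(K_J-h_0I)u,u\rangle-\langle(h-h_0)u,u\rangle$ for part \emph{iii}. The only real difference is that your measure-theoretic argument for ``locally a.e.\ constant plus $R$-connected implies a.e.\ constant'' is spelled out more carefully than in the paper, which simply asserts this implication.
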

\begin{proof} Under the hypotheses and thanks to the previous Proposition 
	\ref{independent_spectrum_K_h}, we have that $\sigma_X(K-hI)$ is 
	independent of $X$. Hence the rest of the results will be proved in 
	$L^2(\Omega)$.
	
	\indent{\it i. } From Proposition \ref{adjoint_operator} we
        have that $K_J$ is selfadjoint in $L^2(\Omega)$, and so is
        $K_J-hI$.  Thus  $\sigma_{L^2(\Omega)}(K_J)$ is composed by real 
	values that are less or equal to $r(K_J)$,  (see \cite[p.165]{Brezis}).
	
	On the other hand,  $\sigma_{L^2(\Omega)}(K_J-hI)=\sigma_{L^2(\Omega)}
	(K_J)-c$ 
	and $c>r(K_J)$, then	 we have that $\sigma_{L^2(\Omega)}(K_J-hI)$ is real 
	and nonpositive. 
	
	{\it ii. } Under the hypotheses we have that  $K\in\mathcal{L}(X,X)$ is 
	compact, then thanks to 
	Theorem \ref{espectro_K-h}, we know that 
	 $$\sigma_X(K-h_0I)=\overline{R(-h_0)}\cup\left\{\mu_n\right\}_{n=1}^M,\quad \mbox
	{with } M\in\mathbb{N}\;\mbox{ or } M=\infty.$$ 
%	  If $M=\infty$, then $\{\mu_n\}_{n=1}^{\infty}$ is a sequence of 
%	  eigenvalues of $K-h_0I$ with finite multiplicity, that has accumulation 
%	  points in $R(-h)$. 
%	  	
%        Since,  $K_J-h_0I$ is a selfadjoint 
%	operator in $L^2(\Omega)$. 
        Then, thanks to Proposition \ref{Green's formulas},
        \begin{equation} \label{sign_012_1}
           \!\!\!\!\!\langle(K_J\!-\!h_0)u,u \rangle_{L^2,L^2} 
	=  {-\frac{1}{2}\int_{\Omega}\int_{\Omega}J(x,y)(u(x)-u(y))^2dy
	\,dx}
	\le0,
        \end{equation}
%	\begin{equation}\label{sign_012_1}
%	\begin{array}{rl}
%	\!\!\!\!\!\langle(K_J\!-\!h_0)u,u \rangle_{L^2,L^2}\!= \!\!\!&\!\!\! \displaystyle{\int_
%	{\Omega}\!\int_{\Omega}\!\!\!J(x,y)u(y)u(x)dydx\!-\!\!\int_{\Omega}\!\int_{\Omega}\!\!\!J(x,y)
%	u^2(x)dydx}
%	\smallskip\\
%	= \!\!\!& \displaystyle{-\frac{1}{2}\int_{\Omega}\int_{\Omega}J(x,y)(u(x)-u(y))^2dy
%	\,dx}
%	\le0,
%	\end{array}
%	\end{equation}
%	Since $L^2(\Omega)$ is a Hilbert space, and the equality  \eqref
%	{sign_012_1} we know that 
From this we get 
	\begin{displaymath}  %%\label{sign_012}
	\sigma_{L^2(\Omega)}(K_J-h_0)\le\sup\limits_{u\in L^2(\Omega) \atop \|u\|_
	{L^2(\Omega)}=1}\langle(K_J-
	h_0)u,u\rangle_{L^2(\Omega),L^2(\Omega)}\le0.
	\end{displaymath}
%	Thus, the spectrum is nonpositive. 
	
	Observe that clearly constant function satisfy $(K-h_{0}) u=0$
        and since $0\not\in\overline{R(-h_0)}$, then $0$ is an isolated eigenvalue 
	with finite multiplicity. 
	Let us prove below that $\{0\}$ is a simple eigenvalue. We consider 
	 $\varphi$ an eigenfunction associated to $\{0\}$. Thanks to 
	 Proposition \ref{Green's  formulas} in $L^2(\Omega)$ we have
	 \begin{equation*}
	 0=\langle (K-h_0\,I)\varphi,\, \varphi\rangle_{L^2(\Omega),L^{2}
	 (\Omega)}=	\displaystyle -\frac{1}{2}\int_{\Omega}\int_{\Omega}J(x,y)
	 (\varphi(y)-\varphi(x))^2dy\,dx.
	\end{equation*}
	Since $J(x,y)>0,\;\forall x,\,y\in\Omega\; \mbox{such that}\; d(x,y)<R$, 
	 then for all 
	$x\in\Omega$, $\varphi(x)=\varphi(y)$ for any $y\in
        B(x,R)$. Thus, since $\Omega$ is $R$--connected,  
	$\varphi$ is a 
	constant function in $\Omega$. Therefore,  $\{0\}$ 	is  simple.

	{\it iii. } If  $h \ge h_0$ from  \eqref{sign_012_1}, we have
        \begin{displaymath}
          	\langle(K_J-hI)u,u\rangle_{L^2(\Omega),L^2(\Omega)} =
                \langle(K_J-h_0)u,u\rangle_{L^2(\Omega),L^2(\Omega)}+\langle(h_0\!-\!h)u,u 
	\rangle_{L^2(\Omega),L^2(\Omega)}
	\le0.
        \end{displaymath}
%	\[
%	\begin{array}{ll}
%	\langle(K_J-hI)u,u\rangle_{L^2(\Omega),L^2(\Omega)} &=\langle(K_J-
%	h_0+h_0-h)u,u
%	\rangle_{L^2(\Omega),L^2(\Omega)} 
%	\\&= \langle(K_J-h_0)u,u\rangle_{L^2(\Omega),L^2(\Omega)}+\langle(h_0\!-\!h)u,u
%	\rangle_{L^2(\Omega),L^2(\Omega)}
%	\le0.
%	\end{array}
%	\]
\end{proof}

%\begin{remark}
%	Recall that if $J:\Omega\times\Omega\to\R$ is the density of probability 
%	then $\int_{\Omega}J(x,y)dy=1$ for all $x\in\Omega$, and $h_0\equiv 
%	1$.
%\end{remark}

%In \cite{Rossi}, the authors prove that the first eigenvalue associated to the  
%nonlocal Dirichlet problem is strictly positive and less that one. They also 
%prove that the exponential rate of convergence to steady states of the 
%nonlocal Neumann problem is strictly positive. This is because, they consider 
%the operator $-(K-h_0I)$.

%%%%%%%%%%%%%%%%%%%%%%%%%%%%%%%%%%%%%%%%%%%%%%%
%%%%%%%%%%%%%%%%%%%%%%%%%%%%%%%%%%%%%%%%%%%%%%%
%                                                                                                                                                                     %
%                                           			  chapter 2                                                                                    %
%                                                                                                                                                                     %
%%%%%%%%%%%%%%%%%%%%%%%%%%%%%%%%%%%%%%%%%%%%%%%
%%%%%%%%%%%%%%%%%%%%%%%%%%%%%%%%%%%%%%%%%%%%%%%

\section{The linear evolution equation}
\label{sec:line-evol-equat}

Let  $(\Omega,\mu,d)$ be a metric 
measure space. Let 
$X=L^p(\Omega)$, with $1\le p\le\infty$ or $X=\mathcal{C}_b(\Omega)$.  
The problem we are going to work with in this 
section, is the following
\begin{equation}\label{eq1}
 \left\{
 \begin{array}{lll}
 u_t(x,t) & =(K_{J}-hI)u (x,t)=Lu (x,t),& x\in\Omega,\, t>0 \\
 u(x,0) & =u_0(x), & x\in\Omega,  
 \end{array}
 \right.
 \end{equation}
 with  $K_Ju(x) =\int_{\Omega}J(x,y)u(y)dy$,  $J \geq 0$, 
$\;u_0\in X$, $K_J\in\mathcal{L}(X,X)$ and if $X=L^p(\Omega)$,
 with $1\le p\le\infty$, we assume $h\in L^{\infty}(\Omega)$ while if
 $X=\mathcal{C}_b(\Omega)$, we assume $h\in\mathcal{C}_b(\Omega)$.

First, since $K_J\in\mathcal{L}(X,X)\;$ then the problem \eqref{eq1} has a
unique strong solution $u \in\mathcal{C}^{\infty}(\mathbb{R},X)$,
given by
\begin{displaymath}
  u(t)=e^{Lt}u_0.
\end{displaymath}
The mapping
$$
	\mathbb{R}\ni t\;\mapsto\; u(t)=e^{Lt}u_0\in X
$$
is analytic. Moreover the mapping $(t,u_0)\;\mapsto\; e^{Lt}u_0$ is continuous.

	We denote the group associated to the operator $L=K_J-hI$ with $S_{K,\,h}$, to 
remark the dependence on $K_J$ and $h$. Hence the solution 
of \eqref{eq1} is
\begin{displaymath}  %%\label{group_K_h}
	u(t, u_0)=S_{K,\,h}(t)u_0=e^{Lt}u_0.
\end{displaymath}

%\begin{remark}
%	Another big difference between the nonlocal problem \eqref{eq1} and 
%	the local problem with the laplacian is that for the local problem, the 
%	flow is not 
%	reversible at all, and as a consequence of the previous Proposition 
%	\ref{existence_sol_linear_problem}, the flow of the nonlocal problem 
%	is reversible.
%\end{remark}
 
 %%%%%%%%%%%%%%%%%%%%%%%%%%%%%%%section %%%%%%%
%%%%%%%%%%%%%%%%%%%%%%%%%%%%%%%%%%%%%%%%%
 %%%%%%%%%%%%%%%%%%%%%%%%%%%%%%%section %%%%%%%
%%%%%%%%%%%%%%%%%%%%%%%%%%%%%%%%%%%%%%%%%
 %%%%%%%%%%%%%%%%%%%%%%%%%%%%%%%section %%%%%%%
%%%%%%%%%%%%%%%%%%%%%%%%%%%%%%%%%%%%%%%%%

% \subsection{The solution $u$ of \eqref{eq1} is positive if the initial data $u_0$ is 
%positive}

\subsection{Maximum principles} 

% We consider the operator $K_Ju =\int_{\Omega}J(x,y)u(y)dy$, with $J$ {\bf 
% nonnegative}, then we prove the weak maximum principle, i.e., 
% the solution $u$ of the problem \eqref
% {eq1} with a  nonnegative initial data $u_0(x)$ is nonnegative.  
 
 First of all, let us consider the problem \eqref{eq1}, with $h\equiv 0$,
  	\begin{equation}\label{eq_for_fomal_proof}
	\left\{
	\begin{array}{rl}
	\displaystyle\frac{du}{dt}=&K_Ju ,
	\smallskip\\
	u(0)=&u_0\ge 0.
	\end{array}
	\right.
	\end{equation}
%Formally, if $J\ge 0$ then $u_t(x,0)=K_J(u_0)(x)\ge 0$, thus $u$ increases with 
%time and then $u\ge 0$ since $u_0\ge 0$. 
%The rigorous proof of this is that 
Then the solution to \eqref{eq_for_fomal_proof} 
can be written as 
	\begin{displaymath}  %%\label{sol_eq_for_fomal_proof}
	u(x,t)=e^{K_Jt}u_0(x)=\left(\sum_{k=0}^\infty \frac{t^kK_J^k}{k!}\right) 
	u_0(x).
	\end{displaymath}
Since $J$ is nonnegative, we have that $K_J^ku_0$ is nonnegative for any $u_0$ 
nonnegative, $\forall k\in\mathbb{N}$. Then we have that the solution $u(x,t)$ is 
nonnegative. In fact, for any $m\ge 0$
$$
\begin{array}{c}
\displaystyle u(x,t)\!\ge\! u_0(x)\!\ge\! 0,\;\; u(x,t)\!\ge\! u_0(x)+tK_Ju_0(x)\!
\ge\! 0,\,\mbox{ and }\,u(x,t)\!\ge\! \left(\sum\limits_{k=0}^m \frac{t^kK_J^k}{k!}\right)u_0(x)\!\ge\! 0.
\end{array}
$$ 

 Now, for $h\not\equiv 0$, let  $u$ be the solution to \eqref{eq1}. We take the function 
 $$v(t)=e^{h(\cdot)t}u(t),\;\mbox{ for }\, t\ge 0.$$ 
This function $v$ satisfies that
%$$v(0) = u_0,$$ 
%and
% \begin{equation}\label{deriv_v_t}
%	 \begin{array}{l}
%		 v_t(x,t)  %=h(x)e^{h(x)t}u(x,t)+e^{h(x)t}u_t(x,t)\\
%		  =h(x)e^{h(x)t}u(x,t)+e^{h(x)t}\big(Ku (x,t)-h(x)u(x,t)\big)
%		  =e^{h(x)t}Ku (x,t).
%	 \end{array}
%\end{equation}
% \item If $h$ in \eqref{eq1} is constant in $\Omega$, $h(x)=\alpha,\;\forall x\in \Omega$, with $\alpha\in\mathbb{R}$, then  
% \[
%	 \begin{array}{l}
%		 v_t(x,t) = e^{\alpha t}Ku (x,t)
%%		 & = e^{\alpha t}\displaystyle\int_{\Omega}J(x,y)u(y)dy\\
%		  = \displaystyle\int_{\Omega}J(x,y)e^{\alpha t}u(y)dy = Kv (x,t).
%	 \end{array}
% \]
%Then  $v(t)=e^{\alpha t}u(t)$ is a solution to the problem 
% 	\begin{equation}\label{sol_v_t}
%	\left\{
%	\begin{array}{rl}
%	\displaystyle\frac{dv}{dt}&=Kv ,
%	\smallskip\\
%	v(0)&=u_0.
%	\end{array}
%	\right.
%	\end{equation}
%We have already proved that the solution to \eqref{sol_v_t} with 
%nonnegative initial 
%data is nonnegative. Thus, the solution $u(x,t)=e^{-\alpha t}v(x,t)$ of \eqref
%{eq1} with $h$ constant, is 
%also nonnegative.
%Thanks to \eqref{deriv_v_t}, we know that $v$ satisfies
%
$$
v_t(x,t)=e^{h(x)t}K_Ju (x,t), \;\mbox{ and }\; v(x,0)=u_0(x),
$$
%then $v$ can be written as 
% \[v(x,t)=u_0(x)+\displaystyle\int_{0}^t e^{h(x)s}Ku (x,s)ds.\]
% Moreover $\;u(x,t)=e^{-h(x)t}v(x,t)$, then
% 
hence, integrating in time we get  
\begin{equation}\label{Form_Var_Const}
 u(x,t)=e^{-h(x)t}u_0(x)+\displaystyle\int_{0}^t e^{-h(x)(t-s)}K_Ju (x,s)ds.
 \end{equation}
\noindent
Let $X=L^p(\Omega)$, with $1\le p\le\infty$, or $X=\mathcal{C}_b
(\Omega)$. For every $\omega_0\in X$ and $T>0$, we consider the 
mapping $\mathcal{F}_{\omega_0}:\mathcal{C}([0,T];X)\rightarrow 
\mathcal{C}([0,T];X)$ defined  as 
 \[ \mathcal{F}_{\omega_0}(\omega)(x,t)  =e^{-h(x)t}\omega_0(x)+\displaystyle\int_{0}
^t e^{-h(x)(t-s)}K_J(\omega)(x,s)ds. \]
%Fix $T>0$ and consider the Banach space
% \[X_T= \mathcal{C}([0,T];X)\]
%  with the norm 
%  \[|||\omega |||=\max\limits_{0\le t\le T}\|\omega(\cdot,t)\|_{X}.\]

%The following lemma gives us the inequalities to prove that  the mapping 
%$\mathcal{F}_{\omega_0}$ is a contraction in $X_T$, and it is valid for a 
%general operator $K_J$, not only for the integral operator $K_J$. 

Then we have the following immediate result.  
 \begin{lema}\label{F_u_0_contrative}
%	 Let $(\Omega,\mu,d)$ be a metric measure space.  
%	  \begin{itemize}
%		\item If $X=L^p(\Omega)$, with $1\le p\le\infty$, we assume 
%		$h\in L^
%		{\infty}(\Omega)$.
%		\item If $X=\mathcal{C}_b(\Omega)$, we assume $h\in\mathcal{C}
%		_b(\Omega)$.
%	\end{itemize} 
	If    $\omega_0,\;z_0\in X$, and $\,\omega,\;z
	\in X_T=\mathcal{C}([0,T];X)$,  then there exist two constants $C_1$ 
	and $C_2$ 
	 depending on $ h$ and $T$, such that 
	 \begin{equation}\label{contraction}
		||| \mathcal{F}_{\omega_0}(\omega)-\mathcal{F}_{z_0}(z)|||\le C_1
		(T)\|
		\omega_0-z_0\|_{X}+C_2(T)|||\omega-z|||, 
	\end{equation}
	 where $C_1(T)=e^{\|h_-\|_{L^{\infty}(\Omega)}T}$, ~$C_2(T)=CTe^{\|
	 h_-\|_{L^{\infty}(\Omega)}T}$,  
	 $C_2:[0,\infty)\to \R$ is increasing and continuous, and $C_2(T)\to 0$, 
	 as $T\to 0$.
\end{lema}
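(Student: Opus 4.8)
The plan is a direct estimate from the explicit formula defining $\mathcal{F}$, using the linearity of $K_J$ and of the integral, the boundedness on $X$ of the multiplication operator by $x\mapsto e^{-h(x)t}$, and the hypothesis $K_J\in\mathcal{L}(X,X)$; throughout, $|||\cdot|||$ denotes $\sup_{t\in[0,T]}\|\cdot\|_X$. First I would subtract the two expressions: since $K_J$ and the integral are linear,
\[
\mathcal{F}_{\omega_0}(\omega)(x,t)-\mathcal{F}_{z_0}(z)(x,t)
= e^{-h(x)t}\big(\omega_0(x)-z_0(x)\big)
+\int_0^t e^{-h(x)(t-s)}\,K_J(\omega-z)(x,s)\,ds .
\]

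Next I would record the elementary multiplier bound. Writing $h=h_+-h_-$ with $h_\pm\ge0$, one has $-h(x)\le h_-(x)\le\|h_-\|_{L^\infty(\Omega)}$, hence $0\le e^{-h(x)\tau}\le e^{\|h_-\|_{L^\infty(\Omega)}\tau}$ for all $x\in\Omega$ and $\tau\ge0$. Therefore, for $X=L^p(\Omega)$ or $X=\mathcal{C}_b(\Omega)$, multiplication by $e^{-h(\cdot)\tau}$ is a bounded operator on $X$ of norm at most $e^{\|h_-\|_{L^\infty(\Omega)}\tau}\le e^{\|h_-\|_{L^\infty(\Omega)}T}$ whenever $0\le\tau\le T$ (when $X=\mathcal{C}_b(\Omega)$ this multiplier is continuous because $h$ is, so it indeed maps $X$ into $X$). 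Taking the $X$-norm in $x$ of the previous identity, using the triangle inequality for the (Bochner) integral and setting $C:=\|K_J\|_{\mathcal{L}(X,X)}$, I obtain for every $t\in[0,T]$
\[
\big\|\mathcal{F}_{\omega_0}(\omega)(t)-\mathcal{F}_{z_0}(z)(t)\big\|_X
\le e^{\|h_-\|_{L^\infty(\Omega)}T}\,\|\omega_0-z_0\|_X
+ C\,e^{\|h_-\|_{L^\infty(\Omega)}T}\int_0^t\|(\omega-z)(s)\|_X\,ds .
\]
Bounding $\|(\omega-z)(s)\|_X\le|||\omega-z|||$ and integrating yields
\[
\big\|\mathcal{F}_{\omega_0}(\omega)(t)-\mathcal{F}_{z_0}(z)(t)\big\|_X
\le e^{\|h_-\|_{L^\infty(\Omega)}T}\,\|\omega_0-z_0\|_X
+ C\,T\,e^{\|h_-\|_{L^\infty(\Omega)}T}\,|||\omega-z|||,
\]
and taking the supremum over $t\in[0,T]$ gives \eqref{contraction} with $C_1(T)=e^{\|h_-\|_{L^\infty(\Omega)}T}$ and $C_2(T)=CTe^{\|h_-\|_{L^\infty(\Omega)}T}$. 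The claimed properties of $C_2$ are then read off the formula: it is a product of nonnegative, continuous, nondecreasing functions of $T$, hence continuous and increasing, and $C_2(T)\to0$ as $T\to0^+$.

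There is no genuine obstacle in this argument; the only points deserving a line of justification are that $\mathcal{F}_{\omega_0}$ does map $\mathcal{C}([0,T];X)$ into itself — continuity in $t$ of $t\mapsto e^{-h(\cdot)t}\omega_0$ and of the integral term, which follows from $h\in L^{\infty}(\Omega)$ (resp. $h\in\mathcal{C}_b(\Omega)$), $K_J\in\mathcal{L}(X,X)$ and dominated convergence — and that the pointwise multiplier $e^{-h(\cdot)\tau}$ has operator norm at most $\|e^{-h(\cdot)\tau}\|_{L^{\infty}(\Omega)}$ on $X$, which is the standard estimate for multiplication operators on $L^p(\Omega)$ and on $\mathcal{C}_b(\Omega)$.
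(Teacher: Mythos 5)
Your proof is correct, and it is exactly the direct estimate the paper has in mind: the lemma is stated there as an ``immediate result'' with no written proof, and your computation — subtracting the two formulas, bounding the multiplier $e^{-h(\cdot)\tau}$ by $e^{\|h_-\|_{L^\infty(\Omega)}T}$, and using $\|K_J\|_{\mathcal{L}(X,X)}$ on the integral term — yields precisely the stated constants $C_1(T)$ and $C_2(T)$.
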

% \begin{proof} 
%	 Since $K\in \mathcal{L}(X, X)$,  and considering  
%	 $h\!=\!h_+\!+h_-$, ($h_+\!\!=\!\!\max\{0,h\}$ and $h_-\!\!=\!\min\{0,h\}$), 
%	 with $h\in 
%	 L^ {\infty}(\Omega)$ or $h\in\mathcal{C}_b(\Omega)$, then we obtain  
%	 \[
%	 \begin{array}{ll}
%	\| \mathcal{F}_{\omega_0}\!\!\!&\!\!(\omega)(\cdot, t) -\mathcal{F}_{z_0}
%	(z)(\cdot, t)\|_{X}  \\
%	&\le\big\|e^{-h(\cdot)t}(\omega_0-z_0)\big\|_X+
%	\displaystyle\int_{0}^t \!\!e^{\|h_-\|_{L^{\infty}(\Omega)}
%	(t-s)}\|K(\omega - z)(s)\|_{X}ds
%	\smallskip\\ 
%	&\le e^{\|h_-\|_{L^{\infty}(\Omega)}T}\|\omega_0-z_0\|_{X}+Ce^{\|
%	h_-\|_{L^{\infty}(\Omega)}T}\, T\, \max\limits_{0\le t\le T}\|\omega -z\|_
%	{X}\\
%	&=C_1(T)\|\omega_0-z_0\|_{X}+C_2(T) \, |||
%	\omega -z|||. 
%	 \end{array}
%	 \]
%	 Taking supremum in $[0,\,T]$, 
%	 $$||| \mathcal{F}_{\omega_0}(\omega)-\mathcal{F}_{z_0}(z)|||\le C_1
%	 (T)\| \omega_0-z_0\|_{X}+ C_2(T)|||\omega-z|||.$$
%	 Thus, the result.
%  \end{proof}
  
%In the following propositions we will prove that the solution $u$ written as in 
% \eqref{Form_Var_Const} is nonnegative given any nonnegative initial 
%data $u_0$ . To do this, we will prove that the mapping $\mathcal{F}_
%{\omega_0}$ has a unique fixed point in $X_T$, and we will prove that $u$ is 
%nonnegative using Picard iterations.
%The proposition is valid for a general positive operator $K$, (i.e., if $z\ge 0$, 
%then $K(z)\ge 0$), in particular for $K=K_J$ with $J\ge 0$.

With this we can prove the following.

\begin{prop}{\bf (Weak maximum principle)} \label{positive_solution_with_u_0_positive}
% 	Let $(\Omega,\mu,d)$ be a metric measure space. 
%	 \begin{itemize}
%		\item If $X=L^p(\Omega)$, with $1\le p\le\infty$, we assume 
%		$h\in L^
%		{\infty}(\Omega)$.
%		\item If $X=\mathcal{C}_b(\Omega)$, we assume $h\in\mathcal{C}
%		_b(\Omega)$.
%	\end{itemize} 
%	If $ K\in \mathcal{L}(X,X)$ is a positive operator, then 
%
	 For every nonnegative $u_0\in X$, the solution to the 
	 problem  \eqref{eq1} is nonnegative for  all $t\ge 0$. 
%	 \begin{equation}\label{eq2}
%	  \left\{
%	 \begin{array}{lll}
%	 u_t(x,t) & =(K-hI)u (x,t) & x\in\Omega,  \\
%	 u(x,0) & =u_0(x), & x\in\Omega, \end{array}
%	 \right.
%	 \end{equation}
%	is nonnegative for  all $t\ge 0$, and it is nontrivial if $u_0\not\equiv 0$. 
%	Furthermore, if $u_0\in X$ nonnegative, not identically zero, then  the 
%	solution $u(\cdot,t)$ to \eqref{eq1} is nonnegative, not identically zero, 
%	for  all $t\ge 0$.
 \end{prop}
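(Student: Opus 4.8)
The plan is to use the variation of constants formula \eqref{Form_Var_Const}, which exhibits the solution as a fixed point of the map $\mathcal{F}_{u_0}$, together with the contraction estimate from Lemma \ref{F_u_0_contrative}. The key observation is that $\mathcal{F}_{\omega_0}$ is \emph{monotone}: if $\omega_0\ge 0$ and $\omega(\cdot,s)\ge 0$ for all $s\in[0,T]$, then since $h\in L^\infty(\Omega)$ (so $e^{-h(x)(t-s)}>0$) and $J\ge 0$ (so $K_J$ maps nonnegative functions to nonnegative functions), every term in
\[
\mathcal{F}_{\omega_0}(\omega)(x,t)=e^{-h(x)t}\omega_0(x)+\int_0^t e^{-h(x)(t-s)}K_J(\omega)(x,s)\,ds
\]
is nonnegative, hence $\mathcal{F}_{\omega_0}(\omega)\ge 0$. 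Thus $\mathcal{F}_{u_0}$ maps the closed cone $\mathcal{C}_+([0,T];X)=\{\omega\in\mathcal{C}([0,T];X):\omega(\cdot,t)\ge 0\ \forall t\in[0,T]\}$ into itself.

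First I would fix $T>0$ small enough that $C_2(T)<1$, which is possible since $C_2(T)\to 0$ as $T\to 0$ by Lemma \ref{F_u_0_contrative}; then $\mathcal{F}_{u_0}$ is a contraction on $\mathcal{C}([0,T];X)$, and since $\mathcal{C}_+([0,T];X)$ is a closed, nonempty subset, the Banach fixed point theorem applied within this complete metric space yields a unique fixed point $u\in\mathcal{C}_+([0,T];X)$. By uniqueness of the solution to \eqref{eq1} (already established, since $u(t)=e^{Lt}u_0$ is the unique strong solution and it satisfies \eqref{Form_Var_Const}), this fixed point coincides with the solution, so $u(\cdot,t)\ge 0$ for $t\in[0,T]$. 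Then I would iterate: starting from the nonnegative datum $u(\cdot,T)$ at time $T$, the same argument on $[T,2T]$ gives nonnegativity on $[T,2T]$, and since $T$ depends only on $h$ (through $\|h_-\|_{L^\infty(\Omega)}$) and not on the size of the initial data, this bootstraps to all $t\ge 0$.

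I do not expect a serious obstacle here; the statement is essentially a positivity-of-the-resolvent/semigroup fact dressed in fixed-point language. The one point requiring a little care is that $\mathcal{C}_+([0,T];X)$ must genuinely be closed in $\mathcal{C}([0,T];X)$: this holds because the order cone $\{f\in X:f\ge 0\}$ is closed in $X=L^p(\Omega)$ or $X=\mathcal{C}_b(\Omega)$, and uniform convergence in $t$ preserves pointwise (in $t$) membership in a closed set. The other mild subtlety is checking that $K_J$ preserves nonnegativity on all of $X$, not just on nice functions — but this is immediate from $K_Jw(x)=\int_\Omega J(x,y)w(y)\,dy$ with $J\ge 0$ whenever $K_J\in\mathcal{L}(X,X)$. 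Everything else is the routine contraction-mapping machinery already packaged in Lemma \ref{F_u_0_contrative}.
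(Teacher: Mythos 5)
Your proposal is correct and follows essentially the same route as the paper: the paper also fixes $T$ with $C_2(T)<1$, runs the Picard iteration for $\mathcal{F}_{u_0}$ starting from $u_1=u_0\ge 0$, observes that positivity of $K_J$ and of $e^{-h(x)(t-s)}$ makes every iterate nonnegative, passes to the limit, and then restarts from $u(\cdot,T)$ since $T$ is independent of the data. Your phrasing via invariance of the closed positive cone under the contraction is just a repackaging of that same Picard argument, with the closedness of the cone playing the role of the paper's passage to the limit of nonnegative iterates.
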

 \begin{proof}
	 Thanks to \eqref{Form_Var_Const}, we know that the solution to \eqref{eq1} can 
	 be written as
	 \begin{displaymath} %%\label{sol_var_const_form_linear_prob}
	 u(x,t)=e^{-h(x)t}u_0(x,t)+\displaystyle\int_{0}^t e^{-h(x)(t-s)}K_Ju (x,s)ds=
	 \mathcal{F}_{u_0}u (x,t).
	 \end{displaymath}
	 
	 We choose $T$ small enough such that $C_2(T)$ in Lemma \ref
	 {F_u_0_contrative} satisfies that $C_2(T)<1$. Hence, by \eqref
	 {contraction} we have that  
	 $\mathcal{F}_{u_0}(\cdot)$ is a contraction in $X_T=\mathcal{C}
	 \big([0,T];X\big)$. 
	  We consider the sequence of Picard iterations,
	 \[u_{n+1}(x,t)=\mathcal{F}_{u_0}(u_n)(x,t)\;\;\forall n\ge1,\; x\in\Omega,\; 
	 0\le t\le T.\]
	 Then the sequence $u_n$ converges to $u$ in $X_T$. 
	 We take  $\;u_1(x,t)=u_0(x)\ge 0$, then for $t\ge 0$
	 \begin{displaymath}  %%\label{step_u_2_positive}
		u_2(x,t)=\mathcal{F}_{u_0}(u_1)(x,t)=e^{-h(x)t}u_0(x)+\displaystyle\int_{0}^t e^
		{-h(x)(t-s)}K_J(u_0)(x)ds
	\end{displaymath}
	is nonnegative, because $K_J$ is a positive operator. Thus $u_2(x,t)\ge 
	0$ for all $t\ge 0$. 
	 Repeating  this argument for all $u_n$,  we get that $u_n(x,t)$ is 
	 nonnegative for 
	 every $n\ge1$, for $t\ge 0$. 
	 As $u_n$ converges to $u$ in $X_T$, we have that $u$ is 
	 nonnegative.
	 
%	 We have proved that for some $\;T>0$, that does 
%	 not depend on $u_0$, the unique solution $u$ of the problem 
%	 \eqref{eq1}  with initial data $u(x,0)=u_0(x)\ge 0$  is 
%	 nonnegative for all $t\in[0,T]$.
	 
	 Since $T>0$ does not depend on the initial data, if we
         consider again the same problem with initial data
         $u(\cdot,T)$,  
	 then the solution $u(\cdot, t)$ is nonnegative for all $t\in[T,\,
	 2T]$. 
	 Since 
	 \eqref{eq1} has a unique solution then 
	 we have proved that the solution  of \eqref{eq1}, $u(x,t)\ge 0$ for all $t\in [0, 2T]$. Repeating this 
	 argument, we have that the solution of \eqref{eq1} is nonnegative $\forall t\ge 0$.
%	 
%	 Since, we have proved that the solution $u(\cdot,t)$ to \eqref{eq1} is 
%	 nonnegative, and $K$ is a positive operator, from \eqref
%	 {sol_var_const_form_linear_prob}, we have that 
%	 \[
%	 u(\cdot, t)\ge e^{-h(\cdot ) t}u_0(\cdot)\not\equiv 0, \;\;\mbox{ if } \; 
%	 u_0\not\equiv 0.
%	 \]
%	 Thus, the result.
 \end{proof}
 
% \begin{remark}
% 	In the previous proposition we can only prove the positivity forwards 
%	on time. (see Corollary \ref{flux_non_symetric} to see that it is only 
%	positive forwards). 
%%	can 
%%	not be proved using the arguments of Picard iterations because, in the 
%%	step \eqref{step_u_2_positive}, we would have that 
%%	$$\displaystyle u_2(x,t)=\mathcal{F}_{u_0}(u_1)(x,t)=e^{-h(x)t}u_0(x)-
%%	\int_{t}^0 e^{-h(x)(t-s)}K(u_0)(x)ds,$$ 
%%	and we do not know if it is positive.
% \end{remark}
 
%We prove below that under the same hypotheses on the positivity of the 
%function $J$, assumed in Proposition \ref{K_aumenta_positividad}, if the 
%initial data $u_0$ is nonnegative, not identically zero, then the 
%solution  to \eqref{eq1}, is strictly positive for  
%$t>0$. 
%%

Now we show that with the assumptions in Proposition
\ref{K_aumenta_positividad} we have in fact the strong maximum
principle. 

\begin{teo}{\bf (Strong maximum principle)}\label
	{linear_solution_strictly_pos}
%	 Let $(\Omega,\mu,d)$ be a metric measure space. 
%	  \begin{itemize}
%		\item If $X=L^p(\Omega)$, with $1\le p\le\infty$, we assume 
%		$h\in L^
%		{\infty}(\Omega)$.
%		\item If $X=\mathcal{C}_b(\Omega)$, we assume $h\in\mathcal{C}
%		_b(\Omega)$.
%	\end{itemize}  
	Assume $  K_J\in \mathcal{L}(X,X)$, and $J\ge 0$ satisfies 
	\[J(x,y)>0\, \mbox{ for all } \,x,\,y\in\Omega,\, \mbox{ such that } d(x,y)\!<\!R,\] 
	for some $R>0$, and $\Omega$ is $R$-connected. 
	
	 Then for every nontrivial  $u_0\ge 0$  in $X$, the solution 
	$u(t)$ of \eqref{eq1} is strictly positive, for all $t>0$. 
\end{teo}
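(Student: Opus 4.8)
The plan is to combine the exponential‑series representation of the solution, $u(t)=S_{K,h}(t)u_0=e^{Lt}u_0$ with $L=K_J-hI\in\mathcal{L}(X,X)$, with the positivity‑propagation result of Proposition~\ref{K_aumenta_positividad}. Fix $t>0$ and a nontrivial $u_0\in X$ with $u_0\ge 0$; by the weak maximum principle (Proposition~\ref{positive_solution_with_u_0_positive}) we already have $u(t)\ge 0$, so the task is only to rule out that $u(\cdot,t)$ vanishes on some portion of $\Omega$.

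First I would rewrite $L$ as a positivity‑preserving operator up to a scalar shift. Set $m:=\|h\|_{L^{\infty}(\Omega)}$ (or $\sup_{\Omega}|h|$ if $X=\mathcal{C}_b(\Omega)$) so that $g:=m-h\ge 0$, and put $\tilde L:=L+mI=K_J+gI$. Since $J\ge 0$, $K_J$ maps nonnegative functions to nonnegative functions, and so does multiplication by $g\ge 0$; hence $\tilde L$ is positivity preserving, and therefore monotone ($v\ge w\ge 0\Rightarrow \tilde L v-\tilde L w=\tilde L(v-w)\ge 0$). An induction then gives $\tilde L^{\,k}u_0\ge K_J^{\,k}u_0\ge 0$ in $X$ for all $k$, because $\tilde L^{\,k}u_0=\tilde L(\tilde L^{\,k-1}u_0)\ge \tilde L(K_J^{\,k-1}u_0)=K_J^{\,k}u_0+gK_J^{\,k-1}u_0\ge K_J^{\,k}u_0$. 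Using $e^{Lt}=e^{-mt}e^{\tilde L t}$ and the norm‑convergent series $e^{\tilde L t}u_0=\sum_{k\ge 0}\frac{t^k}{k!}\tilde L^{\,k}u_0$, all of whose terms are nonnegative for $t>0$, I can discard every term but the $n$‑th to obtain, for each $n\in\mathbb{N}$,
\begin{equation*}
u(t)=S_{K,h}(t)u_0\ \ge\ e^{-mt}\,\frac{t^{n}}{n!}\,K_J^{\,n}u_0\qquad \text{in } X.
\end{equation*}
(Alternatively one can iterate the variation‑of‑constants identity \eqref{Form_Var_Const} as in the proof of Proposition~\ref{positive_solution_with_u_0_positive}, using $e^{-h(x)(t-s)}\ge e^{-mt}>0$, to get $u(\cdot,t)\ge c_n(t)K_J^{\,n}u_0$ with $c_n(t)>0$; this gives the same bound.)

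Next I would feed in the positivity of the kernel. Because $J$ satisfies \eqref{positivity_J_K_aum_pos_01} with the given $R$, Proposition~\ref{K_aumenta_positividad} applies to the nontrivial $u_0\ge 0$ and yields $P\big(K_J^{\,n}(u_0)\big)\supset P_R^{n}(u_0)$ for every $n$; moreover, as shown in its proof (see \eqref{pos_001}--\eqref{P_K_contains_P_1} and the subsequent iteration) $K_J^{\,n}u_0>0$ $\mu$‑a.e.\ on the open set $P_R^{n}(u_0)$. Combining this with the displayed lower bound, $u(x,t)>0$ for $\mu$‑a.e.\ $x\in P_R^{n}(u_0)$, for all $n$. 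Finally, $R$‑connectedness of $\Omega$ forces $\bigcup_{n\ge 1}P_R^{n}(u_0)=\Omega$: given $y\in\Omega$, pick $x_0\in P(u_0)$ (nonempty since $u_0$ is nontrivial and nonnegative) and an $R$‑chain $x_0,\dots,x_N=y$; then $x_1\in B(x_0,R)\subset P_R^{1}(u_0)$, and inductively $x_i\in P_R^{i}(u_0)$, so $y\in P_R^{N}(u_0)$. Since the sets $P_R^{n}(u_0)$ increase in $n$, we conclude $u(x,t)>0$ for $\mu$‑a.e.\ $x\in\Omega$ (and for every $x\in\Omega$ when $X=\mathcal{C}_b(\Omega)$, where the values are genuine pointwise values). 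As $t>0$ was arbitrary, $u(t)$ is strictly positive for all $t>0$.

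I expect the only delicate step to be the passage in the third paragraph from the abstract inclusion $P(K_J^{\,n}u_0)\supset P_R^{n}(u_0)$ to an honest (a.e., resp.\ pointwise) positivity statement for $K_J^{\,n}u_0$ on $P_R^{n}(u_0)$, and hence for $u(\cdot,t)$ — one must reread the proof of Proposition~\ref{K_aumenta_positividad}, where the strict positivity of $K_Jz$ on $\bigcup_{y\in P(z)}B(y,R)$ is obtained pointwise directly from \eqref{positivity_J_K_aum_pos_01}. The exponential‑series lower bound and the $R$‑chain argument that $\bigcup_n P_R^{n}(u_0)=\Omega$ are routine.
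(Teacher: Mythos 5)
Your proof is correct, but it takes a genuinely different route from the paper's. The paper stays with the variation-of-constants representation \eqref{Form_Var_Const}: it sets $v(t)=e^{h(\cdot)t}u(t)$, observes $v_t=e^{h(\cdot)t}K_J(u(t))\ge 0$ so that $v$ (hence the essential support of $u$) is nondecreasing in $t$, extracts from the Duhamel integral the inclusion $P(u(t))\supset P^1_R(u(s))$ for $t>s$ (one application of $K_J$ per time step), and then slices $[0,t]$ into $n$ subintervals $t_j=tj/n$ so that the support gains one $R$-ball per subinterval, ending with $P(u(t))\supset P_R^n(u_0)\supset\mathcal{K}$ for every compact $\mathcal{K}$. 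You instead shift the generator by $mI$ with $m=\|h\|_{L^{\infty}(\Omega)}$ so that $\tilde L=K_J+(m-h)I$ is positivity preserving, expand $e^{\tilde Lt}$ as a norm-convergent series of nonnegative terms, and retain only the $n$-th term to obtain the explicit bound $u(t)\ge e^{-mt}\frac{t^{n}}{n!}K_J^{\,n}u_0$ for all $n$ simultaneously; Proposition \ref{K_aumenta_positividad} — more precisely the pointwise positivity \eqref{pos_001} established inside its proof, which you rightly flag as the one delicate ingredient — then gives positivity on $P_R^{n}(u_0)$, and your $R$-chain argument (essentially Lemma \ref{R_conected_compact_set}) gives $\bigcup_{n}P_R^{n}(u_0)=\Omega$. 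Your version dispenses with the time-slicing and yields a quantitative lower bound valid for each fixed $t>0$ at once; the paper's version avoids the operator series and the scalar shift and records along the way the monotonicity $P(u(t))\supset P(u(s))$, which is of independent interest. Both hinge on the same key lemma. Two small points worth making explicit in your write-up: the inequality between the series and its $n$-th term survives the passage to the norm limit because the cone of nonnegative functions is closed in $X$; and the final union over $n$ is countable, so the a.e.\ positivity statements on the sets $P_R^{n}(u_0)$ patch together to give a.e.\ positivity on $\Omega$ (pointwise positivity in the case $X=\mathcal{C}_b(\Omega)$).
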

\begin{proof}
	Thanks to Proposition \ref{positive_solution_with_u_0_positive}, we know that 
	$u\ge 0$ in $\Omega$,  for all $t\ge 0$. 
	We take $$v(t)=e^{h(\cdot)t}u(t),$$ 
	then recalling the definition of the essential  support in Definition \ref
	{definition_positive_set}, we have $P(u(t))=P(v(t))$, for all $t\ge 0$. 
	From the argument above \eqref{Form_Var_Const}, we know that $v$ satisfies 
	\begin{equation}\label{strong_max_prin_001}
	v_t(t)=e^{h(\cdot)t}K_J(u(t))\ge 0,\quad\forall t\ge 0.
	\end{equation}
	Integrating \eqref{strong_max_prin_001} in $[s,\,t]$,  we obtain
	\begin{equation}\label{v_t_en_funcion_v_s}
	v(t)=v(s)+\displaystyle\int_{s}^t v_t(r)dr\ge v(s), \;\;\mbox{ for any } \, t\ge s\ge 0.
	\end{equation}
	Then $P(v(t))\supset P(v(s))$, $\forall t\ge s$. Moreover, since $v(t)
	=e^{h(\cdot)t}u(t)$ and  thanks to \eqref{v_t_en_funcion_v_s}, we obtain
%	\[
%	e^{h(\cdot)t}u(t)\ge e^{h(\cdot)s}u(s).
%	\]
%	Hence, 
	\[
	u(t)\ge e^{-h(\cdot)(t-s)}u(s).
	\]
	This implies that $P(u(t))\supset P(u(s))$, $\forall t\ge s$. As a consequence of  
	\eqref{v_t_en_funcion_v_s}, we have that for any subset  $D\subset \Omega$,
	\begin{equation}\label{v_restring_D}
	\left.v\right|_D(t)=\left.v\right|_D(s)+\displaystyle\int_{s}^t \left.\left(e^{h(\cdot)r}K_J
	(u(r))\right)\right|_D dr.
	\end{equation}
	Since $P(v(t))\supset P(v(s))$ for all $ t\ge s$, and from \eqref{v_restring_D}, 
	we have that  
	\begin{equation}\label{v_restring_D_2}
	P(u(t))\cap D=P(v(t))\cap D\supset P(K_Ju (r))\cap D,\quad \mbox{for all }\;r\in[s,t].
	\end{equation}
	Moreover, applying Proposition \ref{K_aumenta_positividad} to $u(s)$, we have 
	\begin{equation}\label{v_restring_D_3}
		P(K_Ju (r))\supset P(K_J(u(s)))\supset 
		P^1_R(u(s))=\!\!\!\!\!\!\bigcup\limits_{x\in P(u(s))}\!\!\!\! B(x,R)\,\; 
		\mbox{for all }\;r\in [s,\, t].
	\end{equation}
	Hence, if we consider the set $D=P^1_R(u(s))$. From 
	\eqref{v_restring_D_2} and \eqref{v_restring_D_3}, we have that 
	\begin{equation}\label{pos_increases_ball_R}
		P(u(t))\supset P_R^1(u(s)),\quad \mbox{for all }\;t> s.
	\end{equation}
	Hence the essential support of the solution at time $t$, contains the balls 
	of radius $R$ centered at the points in the support of the solution at time $s<t$.  
	
	We fix $t>0$, and let $\mathcal{C}\subset \Omega$ be a 
	 compact set, then  
	Proposition \ref{K_aumenta_positividad} implies that 
	exists $n_0\in \mathbb{N}$, such that $ \mathcal{C}\subset P^{\,n}(u_0)$ for all 
	$n\ge n_0$. 
	We consider the sequence of times  
	$$t=t_n,\; t_{n-1}=t(n-1)/n,\, ...,t_j=t\, j/n,\, ... \,,\, t_1=t/n,\, t_0=0. $$ 
	Therefore, thanks to \eqref{pos_increases_ball_R}, we have that  the 
	essential supports at time $t$, contains the balls of radius $R$ 
	centered at the points in the essential support at time $t_{n-1}$,  $P_R^1(u(t_
	{n-1}))$, which contains the balls of radius $R$ centered at the points 
	in the essential support at time $t_{n-2}$, then  
	$P_R^2(u(t_{n-2}))$. Hence repeating this argument, we have 
	\[
	\begin{array}{l}
	P(u(t))=P(u(t_n)) \supset P_R^1(u(t_{n-1}))\supset P_R^2(u(t_{n-2}))\supset
	\ldots\supset  P_R^n(u_0)\supset \mathcal{C}.
	\end{array}
	\]	
	Thus, we have proved that $u(t)$ is strictly positive for every compact set in 
	$\Omega,\;\forall t>0$. Therefore, $u(t)$ is strictly positive in $\Omega$, 
	for all $t>0$.
\end{proof}

\begin{corolario}\label{flux_non_symetric}
	Under the assumptions of Theorem \ref{linear_solution_strictly_pos}, 
	if  $u_0\ge 0$, not identically zero, with $P(u_0)\ne \Omega$, then the 
	solution to \eqref{eq1} has to be sign changing in $\Omega,\;\forall t<0$.
\end{corolario}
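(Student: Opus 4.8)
The plan is to argue by contradiction, combining the strong maximum principle (Theorem \ref{linear_solution_strictly_pos}) with the fact that $L=K_J-hI$ generates a \emph{group} $S_{K,h}(\cdot)$, so that each operator $S_{K,h}(t)$ is a bijection of $X$. Suppose that for some $t_0<0$ the solution $u$ of \eqref{eq1} with datum $u_0$ is not sign changing at time $t_0$; then either $u(t_0)\ge 0$ or $u(t_0)\le 0$ in $\Omega$. I would first record that $u(t_0)=S_{K,h}(t_0)u_0$ is nontrivial: since $u_0\not\equiv 0$ and $S_{K,h}(t_0)$ is invertible, $u(t_0)\not\equiv 0$.

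In the case $u(t_0)\ge 0$, I would use that $L$ is autonomous, so that $s\mapsto u(s+t_0)$ is the solution of \eqref{eq1} with the nontrivial nonnegative initial datum $u(t_0)$. Theorem \ref{linear_solution_strictly_pos} then yields that $u(t)$ is strictly positive for every $t>t_0$; concretely, following the proof of that theorem, $P(u(t))\supset\mathcal{C}$ for every compact $\mathcal{C}\subset\Omega$, and since singletons are compact, $P(u(t))=\Omega$. Because $t_0<0$ we may take $t=0$, and conclude $P(u_0)=P(u(0))=\Omega$, contradicting the hypothesis $P(u_0)\ne\Omega$. In the case $u(t_0)\le 0$, I would run the same argument for the initial datum $-u_0$: the corresponding solution of \eqref{eq1} is $-u$, the datum $-u(t_0)\ge 0$ is nontrivial, so by Theorem \ref{linear_solution_strictly_pos} the function $-u(t)$ is strictly positive for all $t>t_0$, and at $t=0$ this forces $-u_0$ to be strictly positive in $\Omega$, contradicting $u_0\ge 0$, $u_0\not\equiv 0$.

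Since both alternatives are impossible, no $t_0<0$ can give a solution of constant sign, i.e. $u(t)$ is sign changing in $\Omega$ for every $t<0$, which is the assertion. I do not foresee a real obstacle: the one point to handle with care is the invertibility of the group, which keeps $u(t_0)$ nontrivial and thereby allows the strong maximum principle to propagate positivity forward in time from the negative instant $t_0$ up to $t=0$; this is the structural reason that such a change of sign backward in time is forced precisely by the hypothesis $P(u_0)\ne\Omega$.
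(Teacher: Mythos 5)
Your proposal is correct and follows essentially the same argument as the paper: rule out the two constant-sign alternatives at a negative time $t_0$ by propagating the strong maximum principle forward from $t_0$ to $t=0$, contradicting $P(u_0)\ne\Omega$ in the nonnegative case and $u_0\ge 0$ in the nonpositive case. The only cosmetic difference is that you establish the nontriviality of $u(t_0)$ via invertibility of the group, while the paper does it by noting that $u(t_0)\equiv 0$ would force $u\equiv 0$ forward in time; these are the same observation.
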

\begin{proof}
	 We argue by contradiction.  Let us assume first that there exists 
	 $t_0<0$ such  that $u(\cdot,t_0)\equiv 0$. We take $u
	 (\cdot,t_0)$ as initial data,  then  solving forward in time $u(\cdot,t)
	 \equiv 0$, for all $t\ge t_0$.  Hence, we arrive to contradiction, 
	 and $u(t_0)$ is not identically zero. 
	 
	 Secondly, let us assume that there exists ~$t_0<0$~ such that 
	 ~$u(\cdot,t_0)\le0$, not identically zero. We take  ~$-u(\cdot,t_0)\ge0$ 
	 as the initial data, then thanks to  Theorem 
	 \ref{linear_solution_strictly_pos}, the solution to \eqref{eq1}, satisfies 
	 that  $u(x,0)<0,\;\forall x\in\Omega$. Thus, we arrive to contradiction.
	 
	 Now, we assume that there exists ~$t_0<0$~ such that ~$u
	 (x,t_0)\ge0$. 
	 Let $u(\cdot,t_0)\ge0$ be the initial data, then thanks to Theorem 
	 \ref{linear_solution_strictly_pos}, the solution to \eqref{eq1}, satisfies that  
	 $u(x,0)>0,\;\forall x\in\Omega$. Thus, we arrive to
	 contradiction. 
	 
	 Therefore, the solution has to be sign changing for all 
	 negative times.
\end{proof}

%\begin{remark}
%As a consequence of Theorem \ref{linear_solution_strictly_pos}, we deduce that the flow 
%associated to
%the problem \eqref{eq1}, sends the boundary of the positive cone of  
%$X$, to the interior of it, when time moves forward. 
%Furthermore, from Proposition \ref{existence_sol_linear_problem}, we obtain that the flow 
%is reversible, but despite of this, from Corollary \ref{flux_non_symetric}  we have that the 
%flow is not symmetric for time $t>0$ and $t<0$. 
%%On the other hand, for the 
%%local problem, we know that the flow is not reversible at all.
%\end{remark}
%%%%%%%%%%%%%%%%%%%%%%%%%%%%%%%%%%%%%%%%%%
%%%%%%%%%%%%%%%%%%%%%%%%%
%%%%%%%%%%%%%%%%%%%%%%%%%%%% subsection %%%%%%%%
%%%%%%%%%%%%%%%%%%%%%%%%%
%%%%%%%%%%%%%%%%%%%%%%%%%%%%%%%%%%%%%%%%%%
%%%%%%%%%%%%%%%%%%%%%%%%%

\subsection{Asymptotic regularizing effects}\label{Regularizing effects}

In general, the group associated to \eqref{eq1} has no regularizing effects.  
However, we will prove that there exists a part 
of the group, that we call $S_2(t)$ 
that is compact, so it somehow regularizes. Moreover, there exists another 
part of the group that we call $S_1(t)$ which does not regularize, i.e., it 
carries the singularities of the initial data, but it decays to zero exponentially 
as $t$ goes 
to $\infty$, if $h\ge 0$. Thus, we will have a 
 regularizing effect when $t$ goes to $\infty$, that is, asymptotic
 smoothness according to \cite[p. 4]{Hale}.

\begin{teo}\label{regularization}
	Let  
%	$(\Omega,\mu,d)$ be a metric measure space, with 
	$\mu(\Omega)<\infty$. 
%	 \begin{itemize}
%		\item If $X=L^p(\Omega)$, with $1\le p\le\infty$, we assume 
%		$h\in L^
%		{\infty}(\Omega)$.
%		\item If $X=\mathcal{C}_b(\Omega)$, we assume $h\in\mathcal{C}
%		_b(\Omega)$.
%	\end{itemize} 
	For $1\le p\le  q\le\infty$, let $X=L^q(\Omega)\,\mbox{  or  }\,\mathcal
	{C}_b(\Omega)$.
	If $K_J\in\mathcal{L}(L^p(\Omega), X)$ is compact,  
	(see Proposition \ref
	{K_compact}), 
	%Remark: Necesitamos que K regularice para que S_2 regularice.
	and $h$ satisfies 
	$$h(x)\ge \alpha>0\;\mbox{ for all }\;x\in\Omega,$$  
	and $u_0\in L^p(\Omega)$, then the group associated to the problem 
	\eqref{eq1},  satisfies that
	\[u(t)=S_{K,h}(t)u_0=S_{1}(t)u_0+S_{2}(t)u_0\]
	 with 
	 \renewcommand{\labelenumi}{\roman{enumi}.}
	 \begin{enumerate}
	 \item $S_{1}(t)\in \mathcal{L}(L^p(\Omega))\;\;\forall t>0$, and 
	 $\;\|S_{1}(t)\|_{\mathcal{L}(L^p(\Omega), L^p(\Omega))}\rightarrow 0$ 
	 exponentially, as $t$ goes to $\infty$.
	 \item  $S_{2}(t)\in\mathcal{L}(L^p(\Omega), X)$ is compact,  
	 $\forall t>0$.
	 \end{enumerate}
	 Therefore $S_{K,h}(t)$ is asymptotically smooth.
\end{teo}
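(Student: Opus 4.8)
The plan is to read the decomposition off directly from the variation of constants formula \eqref{Form_Var_Const}. Since $\mu(\Omega)<\infty$ and $q\ge p$ there are continuous inclusions $X\hookrightarrow L^p(\Omega)$ in both cases $X=L^q(\Omega)$ and $X=\mathcal{C}_b(\Omega)$; together with $K_J\in\mathcal{L}(L^p(\Omega),X)$ this gives $K_J\in\mathcal{L}(L^p(\Omega),L^p(\Omega))$, so $L=K_J-hI$ is bounded on $L^p(\Omega)$ and generates the group $S_{K,h}(t)=e^{Lt}$ there. Let $S_1(t)$ denote multiplication by $e^{-h(\cdot)t}$. Taking $u(s)=S_{K,h}(s)u_0$ in \eqref{Form_Var_Const} gives the Duhamel identity
\[
S_{K,h}(t)u_0=S_1(t)u_0+\int_0^t S_1(t-s)\,K_J\,S_{K,h}(s)u_0\,ds,
\]
and I simply set $S_2(t):=S_{K,h}(t)-S_1(t)$, which equals this integral.

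\textbf{The part $S_1$.} Because $h\in L^\infty(\Omega)$, $S_1(t)$ is a bounded multiplication operator, so $S_1(t)\in\mathcal{L}(L^p(\Omega))$ for every $t$; and since $h\ge\alpha>0$ we have $0\le e^{-h(x)t}\le e^{-\alpha t}$ for $t\ge0$, hence $\|S_1(t)\|_{\mathcal{L}(L^p(\Omega))}\le e^{-\alpha t}$, which goes to $0$ exponentially. This is item (i).

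\textbf{The part $S_2$ (the main point).} For each fixed $s\in[0,t]$ the operator $S_1(t-s)\,K_J\,S_{K,h}(s)$ is compact from $L^p(\Omega)$ into $X$, being the composition of the bounded operator $S_{K,h}(s)\in\mathcal{L}(L^p(\Omega))$, the compact operator $K_J\in\mathcal{L}(L^p(\Omega),X)$, and the multiplication operator $S_1(t-s)$, which is bounded on $X$ of norm $\le1$ (on $\mathcal{C}_b(\Omega)$ this uses $h\in\mathcal{C}_b(\Omega)$, so $e^{-h(\cdot)(t-s)}\in\mathcal{C}_b(\Omega)$). I then check that $s\mapsto S_1(t-s)K_J S_{K,h}(s)$ is norm continuous from $[0,t]$ into $\mathcal{L}(L^p(\Omega),X)$: the map $s\mapsto S_{K,h}(s)$ is norm continuous (indeed real analytic) because $L$ is bounded, left composition with the fixed $K_J$ preserves this, and $\tau\mapsto S_1(\tau)$ is norm continuous on $X$ since $|e^{-h(x)\tau_1}-e^{-h(x)\tau_2}|\le\|h\|_{L^\infty(\Omega)}|\tau_1-\tau_2|$ uniformly in $x$. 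Consequently the Riemann integral defining $S_2(t)$ is a uniform limit of Riemann sums, each a finite linear combination of compact operators $L^p(\Omega)\to X$; since the compact operators form a closed subspace of $\mathcal{L}(L^p(\Omega),X)$, $S_2(t)\in\mathcal{L}(L^p(\Omega),X)$ is compact for every $t>0$. This is item (ii), and composing with the inclusion $X\hookrightarrow L^p(\Omega)$ also shows $S_2(t)$ is compact on $L^p(\Omega)$.

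\textbf{Conclusion and expected obstacle.} With $S_{K,h}(t)=S_1(t)+S_2(t)$, $S_2(t)$ completely continuous and $\|S_1(t)\|_{\mathcal{L}(L^p(\Omega))}\to0$ as $t\to\infty$, the sufficient condition for asymptotic smoothness of \cite[p. 4]{Hale} applies verbatim, so $S_{K,h}(t)$ is asymptotically smooth. I expect the only genuine work to lie in the $S_2$ step, namely making precise that the integral of a norm-continuous, compact-operator-valued function is again compact — handled through Riemann sums and the closedness of the ideal of compact operators — and treating the two target spaces $X=L^q(\Omega)$ and $X=\mathcal{C}_b(\Omega)$ uniformly, for which the estimate $0\le e^{-h(\cdot)\tau}\le1$ and the continuity of $h$ are exactly what is needed.
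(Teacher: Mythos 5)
Your proposal is correct, and the decomposition, the treatment of $S_1$, and the final appeal to Hale's criterion coincide with the paper's. The one place where you genuinely diverge is the compactness of $S_2(t)$. You establish it at the level of operator-valued functions: you check that $s\mapsto S_1(t-s)\,K_J\,S_{K,h}(s)$ is norm continuous from $[0,t]$ into $\mathcal{L}(L^p(\Omega),X)$ with compact values, so the Riemann integral is a norm limit of finite sums of compact operators, and compactness passes to the limit because the compact operators form a closed subspace. The paper instead works at the level of vectors: it fixes a bounded set $\mathcal{B}\subset L^p(\Omega)$, shows that the orbit set $\{S_{K,h}(s)u_0 : (s,u_0)\in[0,t]\times\mathcal{B}\}$ is bounded, hence its image under the compact $K_J$ lies in a fixed compact set $\mathcal{W}\subset X$, pushes $[0,t]\times\mathcal{W}$ forward through the continuous map $(s,f)\mapsto e^{-h(t-s)}f$ to a compact set, and then invokes Mazur's theorem to conclude that $\tfrac1t S_2(t)(\mathcal{B})$ lies in the closed convex hull of that compact set. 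The two arguments prove the same thing by standard but distinct routes: yours needs norm continuity of the integrand (which you verify, using that $L$ is bounded and that $|e^{-h(x)\tau_1}-e^{-h(x)\tau_2}|\le \|h\|_{L^{\infty}(\Omega)}|\tau_1-\tau_2|$ for $\tau_1,\tau_2\ge 0$), while the paper's only needs pointwise containment of the integrand's values in one compact set, which is marginally weaker but requires the Mazur step. Both correctly handle the two target spaces, with the observation that for $X=\mathcal{C}_b(\Omega)$ one uses the standing assumption $h\in\mathcal{C}_b(\Omega)$ so that multiplication by $e^{-h(\cdot)\tau}$ preserves $X$.
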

\begin{proof}
	We write the solution associated to \eqref{eq1}, as in \eqref{Form_Var_Const}, then 
	we have that 
	 $$u(x,t)=S_{K,h}(t)u_0(x) = e^{-h(x)t}u_0(x)+\displaystyle\int_0^t 
	 e^{-h(x)(t-s)}K_Ju (x,s)ds,\;\forall x\in\Omega$$ 
	 and we define $S_{1}(t)u_0=e^{-h(\cdot)t}u_0$,
         $S_{2}(t)u_0=\int_0^t e^{-h(\cdot)(t-s)}K_Ju
         (s)ds$.  %%$ = 	\int_0^te^{-h(\cdot)(t-s)}K_J(S_{K,h}(s)u_0)ds$.
%	\[
%	\begin{array}{l}
%	S_{1}(t)u_0=e^{-h(\cdot)t}u_0\\
%	S_{2}(t)u_0=\displaystyle\int_0^t e^{-h(\cdot)(t-s)}Ku (s)ds=
%	\displaystyle\int_0^te^{-h(\cdot)(t-s)}K(S_{K,h}(s)u_0)ds.
%	\end{array}
%	\]

\noindent 	{\it  i. $\quad$}Since $u_0\in L^p(\Omega)$ and $h\in L^{\infty}(\Omega)$ 
	with $h\ge\alpha>0$, then $\,S_{1}(t)u_0=e^{-h(\cdot)t}u_0\in L^p(\Omega)$ 
	and
	\[
	\begin{array}{ll}
	\|S_{1}(t)\, u_0\|_{L^p(\Omega)} = \|e^{-h(\cdot)t}u_0(\cdot)\|_{L^p(\Omega)} 
	\le e^{-\alpha t} \|u_0\|_{L^p(\Omega)}.
	\end{array} 
	\]  
	%\[
	%\begin{array}{ll}
	%\|S_{1}(t)\, u_0\|^p_{\mathcal{L}(L^p(\Omega))}&= \displaystyle
	%\int_{\Omega} |e^{-h(x)t}u_0(x)|^pdx\\
	%&\\
	%& = \displaystyle\int_{\Omega} e^{-h(x)\,p\,t}|u_0(x)|^pdx\\
	%&\\
	%& \le e^{-\alpha\,p\,t} \|u_0\|^p_{L^p(\Omega)}.
	%\end{array} 
	%\]  
%	Therefore $\|S_{1}(t)\|_{\mathcal{L}(L^p(\Omega),L^p(\Omega))}\le e^{-\alpha t}$, with 
%	$\alpha>0$, then it converges exponentially to $0$, as $t$ goes to $\infty$.
%	\smallskip \\
	{\it  ii. $\quad$} Fix $t>0$, as $h\in L^{\infty}(\Omega)$, $S_{K,h}(s)\in 
	\mathcal{L}( L^p(\Omega))\;\forall s\in[0,t]$, and 
	$K_J\in\mathcal{L}(L^p(\Omega), X)$, then 
        \begin{displaymath}
          	\|S_{2}(t)(u_0)\|_{X} \le e^{-\alpha t} \int_0^t \|K_J(S_{K,h}(s)u_0)\|_{X}ds
			 \le e^{-\alpha t }t\max\limits_{0\le s\le  t} \|K_J(S_{K,h}(s)u_0)\|_{X}<\infty. 
        \end{displaymath}
%	Thus $S_{2}(t)\in \mathcal{L}(L^p(\Omega), X)$.
	
	Let us see now that $S_{2}(t)\in \mathcal{L}(L^p(\Omega), X)$ is compact 
	$\forall t>0$. Fix $t>0$ and consider a bounded set $\mathcal{B}$ of initial data. 
%	We know that 
%	\[S_{2}(t)u_0=\displaystyle\int_0^t e^{-h(\cdot)(t-s)}Ku (\cdot,s)ds.\]
	We denote $ S_2(t)u_0=\int_{0}^t F_{u_0}(s)ds$, with 
	$$F_{u_0}(s)=e^{-h(\cdot)(t-s)}K_J(S_{K,h}(s)u_0).$$
	Assume we have proved that 
	$F_{u_0}(s)\in\!\!\!\text{{\calligra C}}\;\;$, where {\calligra C  }  is a 
	compact set in $X$,   for all $s\in[0, t]$ and for all 
	$u_0\in\mathcal{B}$. Then we have that 
	$\frac{1}{t}S_2(t)(u_0)\in \overline{co}\big(\!\!\text{{\calligra C  }}\,\big),\;
	\forall u_0\in\mathcal{B}$, 
	and thanks to  Mazur's Theorem, we obtain that 
	$\frac{1}{t}S_2(t)(\mathcal{B})$ is in a compact set of $X$. 
	Therefore $S_2(t)$ is compact. 
	Now, we have to prove that $F_{u_0}(s)=e^{-h(\cdot)(t-s)}
	K_J(S_{K,h}(s)u_0)$ belongs to a compact set, for all 
	$(s,\,u_0)\in[0,t]\times\mathcal{B}$.
	
	First of all, we check that $K_J(S_{K,h}(s)u_0)$ belongs to 
	a compact set 
	$\mathcal{W}$ in $X$, for all $(s,\,u_0)\in[0,t]\times\mathcal{B}$. 
	Since $K_J$ is compact, we just have to prove that the set 
	\[B=\{S_{K,h}(s)u_0:\, (s,\,u_0)\in[0,t]\times\mathcal{B}\}\] 
	is bounded. In fact, since $K_J-hI\in\mathcal{L}(L^p(\Omega), 
	L^p(\Omega))$, then   for some $\delta >0$ 
%$\left|\sigma_{L^p(\Omega)}(K_J-hI)\right|\le 
%	\|K_J-hI\|_{\mathcal{L}(L^p(\Omega))}\le \delta<\infty$, thus
	\[
	\begin{array}{l}
	\|S_{K,h}(s)u_0\|_{L^p(\Omega)}=\|u(\cdot,s)\|_{L^p(\Omega)}
	\le Ce^{\delta s}\|u_0\|_{L^p(\Omega)}
	\le Ce^{\delta t}\|u_0\|_{L^p(\Omega)},
	\end{array}
	\]
	 for all $(s,\,u_0)\in[0,t]\times\mathcal{B}$. 
%	 (This inequality will be proved with more details 
%	 in Proposition \ref{cota}).  % 3.4.2
	 Then, since $\mathcal{B}$ is bounded, we obtain that 
	 $B$ is bounded in $L^p(\Omega)$.
	 
	 Finally, we just need to prove that $F_{u_0}(s)$ is in a compact 
	 set for all $(s,\,u_0)\in[0,t]\times\mathcal{B}$. Since the mapping 
	\[
	\begin{array}{lrcl}
	M: & [0,\,t]\times X&\longrightarrow &X\\
	&(s,f)&\longmapsto&e^{-h\,(t-s)}f
	\end{array}
	\]
	is continuous, then $M$ sends the compact set 
	$[0,\,t]\times\mathcal{W}$ into a compact set {\calligra{C  }}. Thus,  
	$F_{u_0}(s)$ belongs to a compact set, {\calligra{C  }}, 
	$\forall(s,\,u_0)\in[0,t]\times\mathcal{B}$. 
%	Therefore we have finally proved that $\,S_2(t)\;$ is compact in $X$, 
%	for all $t>0$, and $S_{K,h}(t)$ is asymptotically smooth.
\end{proof}

%\begin{remark}
%	A direct consequence of the previous results is that 
%	$u(t)=S_{K,h}(t)u_0\!\in L^p(\Omega)$ if $u_0\in L^p(\Omega)$. 
%	Thus, the solution does not belong to a more regular space that 
%	the initial data. Hence there is no regularizing effect.
%\end{remark}
%%%%%%%%%%%%%%%%%%%%%%%%%%%%%%%%%%%%%%%%%%
%%%%%%%%%%%%%%%%%%%%%%%%%
%%%%%%%%%%%%%%%%%%%%%%%%%%%%section%%%%%%%%%%%
%%%%%%%%%%%%%%%%%%%%%%%%%
%%%%%%%%%%%%%%%%%%%%%%%%%%%%%%%%%%%%%%%%%%
%%%%%%%%%%%%%%%%%%%%%%%%%
\subsection{The Riesz projection and asymptotic behavior}
 \noindent

In this section we study the asymptotic behavior of the solution of the 
problem \eqref{eq1} by using the Riesz projection, which is given in terms 
of the spectrum of the operator. Since the spectrum of the operator 
$L=K_J-hI
$ has been proved in Proposition \ref{independent_spectrum_K_h} 
to be independent of $X=L^p(\Omega)$, with $1\le p\le\infty$ or $X=
\mathcal{C}_b(\Omega)$, then the asymptotic behavior of 
the solution of \eqref{eq1} will be characterized 
with the Riesz projection that can be explicitly computed in
$L^2(\Omega)$.

We now briefly recall the construction of the Riesz projection, for
more details see \cite[chap. 1]{Gohberg} and Section III.6.4 in  \cite{kato}. Consider an operator $L\in
\mathcal{L}(X, X)$, where $X$ is a Banach space
%. The  
%proposition below gives a bound of the norm of the group associated to the linear 
%and bounded operator $L$. 
%We will also give a general result of asymptotic behavior of the 
%solutions associated to the problem 
and consider the linear problem 
  \begin{equation}\label{problema_L}
	 \left\{
	 \begin{array}{rl}
	 u_t(x,t) & =Lu (x,t)\\
	 u(x,0) & =u_0(x),\mbox{   with } u_0\in X . 
	 \end{array}
	 \right.
 \end{equation} 
Since $L$ is a bounded operator, then $Re(\sigma(L))\le\delta$, and the 
norm of the semigroup satisfies that 
\begin{equation}\label{cota}
||e^{Lt}||_{\mathcal{L}(X)}\le C_0e
^{(\delta+\varepsilon) t},\quad  \quad \;t\ge 0.
\end{equation}

Then, given an isolated part $\sigma_1$ of $\sigma(L)$ we define the {\bf Riesz projection} of $L$ 
corresponding to the isolated part $\sigma_1$, $Q_
{\sigma_1}$, as the  bounded linear operator on $X$ given by  
\[Q_{\sigma_1}=\frac{1}{2\pi i}\displaystyle\int_{\Gamma}(\lambda I-L)^{-1}d
\lambda,\]
where $\Gamma$ consists of a finite number of rectifiable Jordan curves, 
oriented in the positive sense around $\sigma_1$, separating $\sigma_1$ 
from $\sigma_2=\sigma(L)\setminus\sigma_1$. This means that $\sigma_1$ belongs to the inner region of 
$\Gamma$, and $\sigma_2$ belongs to the outer region of $\Gamma$. The 
operator $Q_{\sigma_1}$ is independent of the 
path $\Gamma$ described as above.

Assume the spectrum of $L$ is the disjoint union of two non-empty closed subsets $
\sigma_1$ and $\sigma_2$. To this decomposition of the spectrum 
corresponds a direct sum decomposition of the space, $X=U\oplus V$, such that $U$ 
and $V$ are $L-$ invariant subspaces of $X$, the spectrum of the 
restriction $L|U$ is equal to $\sigma_1$ and that of $L|V$ to $\sigma_2$.
If we  assume that
$$\delta_2<\mbox{Re}(\sigma_1)\le \delta_1,\;\mbox{Re}(\sigma_2)\le \delta_2\,,\;\mbox{ with }
\; \delta_2<\delta_1,$$
% like in Figure \ref{sigma_1_sigma_2}.
% \begin{figure} [htp]
%\begin {center}
%\includegraphics [height=4cm]{sigma_1}
%\caption{$\sigma(F)=\sigma_1\cup\sigma_2.$}
%\label{sigma_1_sigma_2}
%\end {center}
%\end{figure}  
% 
then we  have that the solution to \eqref{problema_L}, can 
be written as 
\[u(t)=Q_{\sigma_1}(u)(t)+Q_{\sigma_2}(u)(t).\]
On the other hand, the solution of \eqref{problema_L} is equal to $\;u(t)=e^{Lt}u_0$. 
Thus, thanks to \eqref{cota} and since $\;\mbox{Re}
(\sigma_2)\le \delta_2\,$ we obtain that for $t>0$
%\begin{equation}\label{cota_Q_sigma_2}
%\begin{array}{ll}
%\|Q_{\sigma_2}(u(t))\|_Y&=\|\left(Q_{\sigma_2}\circ e^{F\,t}\right)(u_0)\|_Y
%\smallskip\\
%&=\|e^{F_2 t}\,\left(Q_{\sigma_2}(u_0)\right)\|_Y
%\smallskip\\
%&\le C_2\, e^{(\delta_2+\varepsilon)t}\|Q_{\sigma_2}(u_0)\|_Y, \quad\forall
%\varepsilon>0,
%\end{array}
%\end{equation}
%
\begin{displaymath} %%\label{cota_Q_sigma_2}
  \|Q_{\sigma_2}(u(t))\|_X=\|\left(Q_{\sigma_2}\circ e^{L\,t}\right)u_0\|_X
\smallskip  =\|e^{L_2 t}\,Q_{\sigma_2}(u_0)\|_X
\smallskip
\le C_2\, e^{(\delta_2+\varepsilon)t}\|Q_{\sigma_2}(u_0)\|_X, 
\end{displaymath}
%%for all $\varepsilon>0$, 
where, $L_2=L|\mbox{Im}\,Q_{\sigma_2}$. 
 With this, we get the following, see \cite[chap. 1]{Gohberg}. 

 \begin{teo}\label{Asymptotic behavior}
%	 Let $(\Omega,\mu,d)$ be a metric measure space, we consider 
%$X=L^p
%	 (\Omega)$, with $1\le p\le\infty$ or $X=\mathcal{C}_b(\Omega)$.  
	Consider $L\in\mathcal{L}(X)\;$ and let $\;\sigma(L)$ be a disjoint 
	union of two closed subsets $
	 \sigma_1$ and $\sigma_2$, with
	 $\delta_2<\mbox{Re}(\sigma_1)\le \delta_1$, $\;\mbox{Re}
	 (\sigma_2)\le \delta_2\,$, with 
	 $\,\delta_2<\delta_1$. Then the solution 
	 of \eqref{problema_L} satisfies 
	\[\lim\limits_{t\rightarrow \infty} \|e^{-\mu\, t}\big(u(t)-Q_{\sigma_1}(u)(t)
	\big)\|_X=0,\;\quad \forall\mu>\delta_2.\]
\end{teo}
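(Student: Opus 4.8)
The plan is to recognise $u(t)-Q_{\sigma_1}(u)(t)$ as the trajectory of the semigroup restricted to the spectral subspace attached to $\sigma_2$, and then to control that trajectory by the exponential bound \eqref{cota} specialised to that subspace.

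First I would record the algebraic identity behind the statement. The Riesz projection $Q_{\sigma_1}$ is a contour integral of the resolvent $(\lambda I-L)^{-1}$, hence it commutes with $L$ and therefore with $e^{Lt}=\sum_{n\ge 0}(Lt)^n/n!$. Thus $Q_{\sigma_1}(u)(t)=Q_{\sigma_1}e^{Lt}u_0=e^{Lt}Q_{\sigma_1}u_0$. Since $\sigma(L)$ is the disjoint union of the closed sets $\sigma_1$ and $\sigma_2$, the associated Riesz projections satisfy $Q_{\sigma_1}+Q_{\sigma_2}=I$ and give the $L$-invariant splitting $X=U\oplus V$ with $U=\operatorname{Im}Q_{\sigma_1}$, $V=\operatorname{Im}Q_{\sigma_2}$, $\sigma(L|_U)=\sigma_1$, $\sigma(L|_V)=\sigma_2$. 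Because $V$ is closed and $L$-invariant, $L_2:=L|_V\in\mathcal{L}(V)$ and $e^{Lt}|_V=e^{L_2 t}$. Consequently
\[
u(t)-Q_{\sigma_1}(u)(t)=e^{Lt}(I-Q_{\sigma_1})u_0=e^{Lt}Q_{\sigma_2}u_0=e^{L_2 t}Q_{\sigma_2}u_0 .
\]

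Next I would bring in the spectral information. Since $\sigma(L_2)=\sigma_2$ and $\operatorname{Re}(\sigma_2)\le\delta_2$, inequality \eqref{cota} applied to the bounded operator $L_2$ on $V$ yields, for each $\varepsilon>0$, a constant $C_2=C_2(\varepsilon)$ with $\|e^{L_2 t}\|_{\mathcal{L}(V)}\le C_2 e^{(\delta_2+\varepsilon)t}$ for all $t\ge 0$. Combining this with the identity above,
\[
\big\|e^{-\mu t}\big(u(t)-Q_{\sigma_1}(u)(t)\big)\big\|_X=e^{-\mu t}\big\|e^{L_2 t}Q_{\sigma_2}u_0\big\|_X\le C_2\,\|Q_{\sigma_2}u_0\|_X\, e^{(\delta_2+\varepsilon-\mu)t}.
\]
Given $\mu>\delta_2$, fix $\varepsilon>0$ with $\delta_2+\varepsilon<\mu$; then the exponent is strictly negative, and since $Q_{\sigma_2}u_0$ is fixed the right-hand side tends to $0$ as $t\to\infty$, which is the assertion.

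The only genuinely delicate point is the uniform bound $\|e^{L_2 t}\|\le C_2 e^{(\delta_2+\varepsilon)t}$, whose exponent is the spectral bound of $L_2$ and not merely $\|L_2\|$; everything else is bookkeeping with projections. This is exactly \eqref{cota} specialised to $L_2$, and it rests on the spectral mapping theorem for the exponential of a bounded operator, $\sigma(e^{L_2 s})=e^{s\sigma(L_2)}$, together with Gelfand's formula $r(e^{L_2 s})=\lim_{n}\|e^{L_2 ns}\|^{1/n}$, the resulting estimate on multiples of a fixed step being patched to all $t\ge 0$ via submultiplicativity and $\sup_{0\le r\le 1}\|e^{L_2 r}\|<\infty$. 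Since \eqref{cota} is already available, the proof reduces to the displayed identity and one estimate.
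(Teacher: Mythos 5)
Your argument is correct and is essentially the same as the paper's: the paper likewise writes $u(t)-Q_{\sigma_1}(u)(t)=Q_{\sigma_2}(u(t))=e^{L_2t}Q_{\sigma_2}(u_0)$ with $L_2=L|_{\mathrm{Im}\,Q_{\sigma_2}}$ and then invokes the bound \eqref{cota} for $L_2$, whose spectrum is $\sigma_2$ with $\mathrm{Re}(\sigma_2)\le\delta_2$, choosing $\varepsilon$ so that $\delta_2+\varepsilon<\mu$. Your additional remarks on why $Q_{\sigma_1}$ commutes with $e^{Lt}$ and why the exponent in \eqref{cota} is governed by the spectral bound rather than $\|L_2\|$ only make explicit what the paper leaves implicit.
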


The assumptions of the  following proposition, are tailored for the
case $L=K_J-hI$ in (\ref{eq1}) and allows to compute the Riesz
projection in terms of the Hilbert projection. 

 \begin{prop}\label{projection_spectro_indep_p}
% 	 Let $(\Omega,\mu,d)$ be a metric measure space. 
	For $1\le p_0<p_1\le\infty$, with $2\in
	 [p_0,\,p_1]$, let $X=L^p(\Omega)$, with $p\in[p_0,p_1]$, or $X=
	 \mathcal{C}_b(\Omega)$. We assume $L\in\mathcal{L}(X, X)$ is 
	 selfadjoint in $L^2(\Omega)$, the spectrum of $L$, 
	 $\sigma_{X}(L)$, 
	 is independent of $X$, and the largest eigenvalue associated to $L$, 
	 $ \lambda_1$ is  simple and isolated, with associated  
	 eigenfunction $\Phi_1\in L^p(\Omega)\cap L^{p'}(\Omega)$, for   
	 $p\in [p_0,p_1]$, if $X=L^p(\Omega)$, or  $\Phi_1\in \mathcal{C}_b(\Omega)
	 \cap L^1 (\Omega)$, if $X=\mathcal{C}_b(\Omega)$, and $\|\Phi_1\|_{L^2(\Omega)}
	 =1$. 
	 
	 If $\sigma_1=\{\lambda_1\}$, and $\Gamma$ is  the curve around 
	 only $\lambda_1$, %%such that $\Gamma$ only surrounds $\{\lambda_1\}$,  
	 then for $u \in X$, the Riesz projection  associated to 
	 $\sigma_1$ is given by 
         \begin{equation} \label{eq:Riesz=Hilbert}
           Q_{\sigma_1}(u)= \big(\int_{\Omega}u\,\Phi_1 \big),\Phi_1   . 
         \end{equation}
%	 \[Q_{\sigma_1}(u)= \big(\int_{\Omega}u\,\Phi_1 \big),\Phi_1=\langle u,\,
%	 \Phi_1\rangle\Phi_1=P_{\sigma_1}(u),\]
%	 where $Q_{\sigma_1}$ is the Riesz projection and $P_{\sigma_1}$ is the Hilbert 
%	 projection over the space generated by the eigenfunctions associated to 
%	 $\sigma_1$.
 \end{prop}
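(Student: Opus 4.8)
The plan is to show that the rank‑one operator $Pu:=\big(\int_{\Omega}u\,\Phi_1\big)\Phi_1$ coincides with $Q_{\sigma_1}$ on $X$, without ever evaluating the contour integral defining $Q_{\sigma_1}$. I will identify the two operators by first matching their ranges and then matching their kernels, the kernel of $Q_{\sigma_1}$ being pinned down through the eigenspace of the adjoint of $L$ at $\lambda_1$; throughout I use only the standard structural facts about the Riesz projection of a bounded operator at an isolated point of its spectrum, see \cite[chap.~1]{Gohberg} and \cite{kato}. As a preliminary, $P\in\mathcal L(X,X)$ is a projection with range $\mathbb C\,\Phi_1$: if $X=L^p(\Omega)$ then $\int_\Omega u\,\Phi_1$ is well defined and bounded by $\|u\|_{L^p}\|\Phi_1\|_{L^{p'}}$ by H\"older's inequality (here we use $\Phi_1\in L^{p'}(\Omega)$) and $Pu$ is a multiple of $\Phi_1\in L^p(\Omega)\subset X$; the case $X=\mathcal C_b(\Omega)$ is identical, using $\Phi_1\in L^1(\Omega)$ and $\Phi_1\in\mathcal C_b(\Omega)$. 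Since $\|\Phi_1\|_{L^2(\Omega)}=1$, i.e.\ $\int_\Omega\Phi_1^2=1$, one gets $P^2=P$.

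Next, $\lambda_1$ being a simple isolated eigenvalue of $L$ means, as in the Kre\u{\i{}}n--Rutman setting, that the Riesz projection $Q_{\sigma_1}$ has rank one; hence its range is the one‑dimensional spectral subspace of $L$ for $\{\lambda_1\}$, which contains $\Phi_1$, so $\mathrm{Im}\,Q_{\sigma_1}=\mathbb C\,\Phi_1$. Thus there is a unique continuous functional $\phi$ on $X$ with $Q_{\sigma_1}u=\phi(u)\,\Phi_1$ for all $u\in X$ and $\phi(\Phi_1)=1$. Since the Riesz projection commutes with $L$, applying $L$ to $Q_{\sigma_1}u=\phi(u)\Phi_1$ and using $L\Phi_1=\lambda_1\Phi_1$ gives $\phi(Lu)=\lambda_1\,\phi(u)$ for all $u\in X$. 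The space of continuous functionals $\phi$ on $X$ with $\phi\circ L=\lambda_1\phi$ is exactly one‑dimensional: it is the range of the Riesz projection of the adjoint of $L$ at $\lambda_1$, which has the same unit rank as $Q_{\sigma_1}$ (equivalently, $\mathrm{Im}(L-\lambda_1 I)$ is closed of codimension one).

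It then remains to exhibit a second, explicit element of this line, namely $\psi(u):=\int_\Omega u\,\Phi_1$, which is continuous on $X$ by the estimates in the first paragraph. Using the symmetry $J(x,y)=J(y,x)$ (as in Proposition~\ref{adjoint_operator}) together with Fubini's theorem, valid under the integrability of $J$ underlying $K_J\in\mathcal L(X,X)$, cf.\ Proposition~\ref{prop1}, one has $\int_\Omega (K_Ju)\Phi_1=\int_\Omega u\,(K_J\Phi_1)$, and since $h$ is real $\int_\Omega (hu)\Phi_1=\int_\Omega u\,(h\Phi_1)$; hence $\psi(Lu)=\int_\Omega u\,(L\Phi_1)=\lambda_1\,\psi(u)$, so $\psi$ lies in the same one‑dimensional space as $\phi$. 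Therefore $\phi=c\,\psi$ for some scalar $c$, and evaluating at $\Phi_1$ gives $1=\phi(\Phi_1)=c\int_\Omega\Phi_1^2=c$, so $\phi=\psi$ and $Q_{\sigma_1}u=\big(\int_\Omega u\,\Phi_1\big)\Phi_1=Pu$ for every $u\in X$, which is \eqref{eq:Riesz=Hilbert}.

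The step I expect to be the crux is the passage of simplicity of $\lambda_1$ from $L$ to its adjoint, that is, the claim that the functionals annihilated by $L-\lambda_1 I$ (on the dual of $X$) form a one‑dimensional space; concretely this is the fact that the rank‑one Riesz projection of $L$ at $\lambda_1$ forces $\mathrm{Im}(L-\lambda_1 I)$ to be closed of codimension one. The remaining ingredients — the well‑definedness and idempotency of $P$, the commutation $LQ_{\sigma_1}=Q_{\sigma_1}L$, and the Fubini interchange giving the self‑adjoint‑type identity — are routine under the standing hypotheses.
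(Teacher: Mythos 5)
Your argument is correct in substance but follows a genuinely different route from the paper. The paper first invokes the classical fact (Kato, Sections III.6.4--6.5) that for the selfadjoint operator $L$ on $L^2(\Omega)$ the Riesz projection at the isolated simple eigenvalue $\lambda_1$ \emph{is} the orthogonal projection onto $\mathbb{C}\Phi_1$, and then transfers this identity to $X$ by continuity of both $Q_{\sigma_1}$ and $P$ together with density of $V=\mathrm{span}\{\chi_D:\mu(D)<\infty\}\subset L^2(\Omega)$ in $L^p(\Omega)$ (and of $L^2\cap\mathcal{C}_b$ in $\mathcal{C}_b$). You instead work entirely in $X$: you use that $Q_{\sigma_1}$ is a rank-one projection commuting with $L$, so that $Q_{\sigma_1}u=\phi(u)\Phi_1$ with $\phi$ spanning the one-dimensional $\lambda_1$-eigenspace of $L^{*}$ (via $Q_{\sigma_1}^{*}$ being the Riesz projection of $L^{*}$ of the same rank), and then you exhibit $\psi(u)=\int_\Omega u\,\Phi_1$ as an explicit element of that line, normalized by $\psi(\Phi_1)=1$. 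Your route makes the role of the adjoint eigenfunctional explicit and avoids the paper's (somewhat glossed-over) consistency of the resolvents of $L$ on $X$ and on $L^2$ over the dense set; the paper's route is shorter because it outsources everything to the $L^2$ spectral theorem. Two small caveats on your side: (i) to get $\psi\circ L=\lambda_1\psi$ you appeal to the concrete kernel form $L=K_J-hI$ and the symmetry of $J$, whereas the proposition is stated for an abstract $L$ selfadjoint in $L^2(\Omega)$ --- this is harmless in the paper's intended application, and in the abstract setting the same identity follows from $L^2$-selfadjointness on $L^2\cap X$ plus the very density-and-continuity argument the paper uses; (ii) you are reading ``simple and isolated'' as saying that the Riesz projection of $L$ \emph{in $X$} has rank one, which is the intended (algebraic) meaning here and is what the Kre\u{\i{}}n--Rutman/compactness framework of the paper actually delivers, but it is worth stating that you use it.
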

 \begin{proof}	
   First, working in $L^{2}(\Omega)$, it is well known that the Riesz
   projection coincides with the Hilbert projection, that is
   (\ref{eq:Riesz=Hilbert}) holds for all $u\in L^2(\Omega)$; see from
   Sections III.6.4 and III.6.5 in \cite{kato}.

         Now in $X=L^p(\Omega)$ for $ p\in [p_0,p_1]$ or $X=
	  \mathcal{C}_b(\Omega)$, since the
         spectrum, $\sigma_{X}(L)$, is independent of $X$, we have
         that  	 the projection $P(u)=\langle u,\,
	 \Phi_1\rangle \Phi_1$ is well defined for $u\in X$ because by 
	 hypothesis, $\Phi_1\in L^{p'}(\Omega)\cap L^p(\Omega)$ for all $p\in
	 [p_0,p_1]$, if $X=L^p(\Omega)$, 
	  or $\Phi_1\in \mathcal{C}_b(\Omega)\cap L^1(\Omega)$, if $X=
	  \mathcal{C}_b(\Omega)$. In fact $P \in
          \mathcal{L}(X, X)$. On the other hand, since the set 
	 $$V=span\left[\,\chi_D\,;\, D\subset\Omega\;\;\mbox
	{with }\;\mu(D)<\infty\right] \subset L^{2}(\Omega),$$
	 where $\chi_D$ is the characteristic function of $D\subset
         \Omega$, 
%	 Then, from Proposition \ref{prop1}, we know that 
%	 $V \subset L^2(\Omega)$ 
         is dense in $L^p(\Omega)$, and $Q_{\sigma_1}\equiv P$ 
	 in $V$, they coincide in $X=L^{p}(\Omega)$. Finally for $X=
         \mathcal{C}_b(\Omega)$, we use that $L^2(\Omega)\cap
         \mathcal{C}_b(\Omega)$ is dense in $\mathcal{C}_b 
	 (\Omega)$ and again  $Q_{\sigma_1}\equiv P$   in
         $X$. 
 \end{proof}

\subsection{Asymptotic behaviour of the solution of the nonlocal diffusion  
problem}

Let $(\Omega,\mu,d)$ be a metric measure space with $\Omega$ compact. 
%	 \begin{itemize}
%		\item if $X=L^p(\Omega)$, with $1\le p\le\infty$, we assume 
%		$h\in L^
%		{\infty}(\Omega)$,
%		\item if $X=\mathcal{C}_b(\Omega)$, we assume $h\in\mathcal{C}
%		_b(\Omega)$.
%	\end{itemize} 
In this section be apply the results of the previous section about the asymptotic 
behavior of the solution for the problem 
\begin{displaymath} %%\label{eq3}
	 \left\{
	 \begin{array}{lll}
	 u_t(x,t) & =(K_J-hI)u (x,t), & x\in\Omega,\; t>0,\\
	 u(x,0) & =u_0(x), & \mbox{with } u_0\in X.
	 \end{array}
	 \right.
 \end{displaymath}

 We study two problems to which we apply the results of the previous sections. In 
particular we consider the cases where $h$ constant  or
$h=h_0=\displaystyle\int_{\Omega}J(\cdot,y)dy$, with $J\in
L^{\infty}(\Omega, L^1(\Omega))$.   

%\begin{itemize}
%	\item $h$ constant  
%	\item  $h=h_0=\displaystyle\int_{\Omega}J(\cdot,y)dy$, where $J\in L^{\infty}(\Omega, 
%	L^1(\Omega))$.
% \end{itemize}
 
 \noindent\textsc{\bf Case $h$ constant.} \ 
For $h=a\in\mathbb{R}$ constant we have the problem
\begin{equation}\label{dirichlet}
 \left\{
 \begin{array}{ll}
 u_t(x,t) & =(K_J-aI)u (x,t), \\
 u(x,0) & =u_0(x) \in L^p(\Omega).
 \end{array}
 \right.
 \end{equation}
% In fact, for $a=1$ we have the proposed nonlocal homogeneous ``Dirichlet 
% problem''  (see section $1$ or \cite{Rossi}).
 
% In the following proposition, we prove that the exponential decay in $X$ of the asymptotic behaviour 
% of the solution of \eqref{dirichlet} is given by the first eigenvalue 
% $\lambda_1$ of ~$K-aI$, and the asymptotic behaviour of the solutions  is described 
% by the unique eigenfunction, $\Phi_1$, associated to $\lambda_1$. 

Then we have. 

\begin{prop}\label{asymp_beh_Dirichlet}
	Let 
%	$(\Omega,\mu,d)$ be a metric measure space, with 
	$\Omega$ be compact and connected. Let $X=L^p(\Omega)$, with 
	$1\le p\le\infty$, or $X=\mathcal{C}_b(\Omega)$.
%	 \begin{itemize}
%		\item If $X=L^p(\Omega)$, with $1\le p\le\infty$, we assume 
%		$h\in L^
%		{\infty}(\Omega)$.
%		\item If $X=\mathcal{C}_b(\Omega)$, we assume $h\in\mathcal{C}
%		_b(\Omega)$.
%	\end{itemize}  
	Let $K_J\in \mathcal{L}(L^{1}(\Omega),\mathcal{C}_b(\Omega))$ 
	be compact, (see Proposition \ref{K_compact})  and 
	assume  $J(x,y)=J(y,x)
	$ with 
	 \begin{displaymath}  %%\label{hyp_J_dirchlet_pr}
	 	J(x,y)>0,\;\forall x,\,y\in\Omega\; \mbox{such that}\; 
		d(x,y)<R,\; \mbox{ for some } \;R>0.
	\end{displaymath}
	Then the solution $u$ of \eqref{dirichlet} satisfies that
	\begin{displaymath} %%\label{comp_asymp_Dirch_problem}
		\lim\limits_{t\rightarrow \infty} \|e^{-\lambda_1t}u(t)-C^*\Phi_1\|_{X}
		=0,
	\end{displaymath}
	where $\displaystyle C^*= \int_{\Omega}u_0\Phi_1$, and $\Phi_1$ is an eigenfunction associated to 
	$\lambda_1$, normalized in $L^{2}(\Omega)$. 
\end{prop}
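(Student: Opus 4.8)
The plan is to read off $\lambda_1$ as the largest eigenvalue of $L=K_J-aI$, split the spectrum of $L$ accordingly, identify the Riesz projection onto $\{\lambda_1\}$ via Proposition \ref{projection_spectro_indep_p}, and then invoke Theorem \ref{Asymptotic behavior}.

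First I would assemble the spectral picture for $L=K_J-aI$. Since $\Omega$ is compact we have $\mu(\Omega)<\infty$, and $K_J\in\mathcal{L}(L^1(\Omega),\mathcal{C}_b(\Omega))$ is compact, so by Proposition \ref{sigma_independiente} (equivalently Proposition \ref{independent_spectrum_K_h} with $h\equiv a$) the operator $L$ lies in $\mathcal{L}(X,X)$ and is compact for $X=L^p(\Omega)$, $1\le p\le\infty$, or $X=\mathcal{C}_b(\Omega)$, and $\sigma_X(L)=\sigma_X(K_J)-a$ is independent of $X$. Because $J$ is symmetric and, using $\mu(\Omega)<\infty$, $J\in L^2(\Omega\times\Omega)$, Proposition \ref{adjoint_operator} shows $K_J$, hence $L$, is selfadjoint in $L^2(\Omega)$, so $\sigma(L)$ is a compact subset of $\mathbb{R}$. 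The hypotheses of Proposition \ref{spectral_radius_K} are exactly those we have assumed ($\Omega$ compact and connected, $J$ symmetric and positive on $d(x,y)<R$, $K_J\in\mathcal{L}(L^1(\Omega),\mathcal{C}_b(\Omega))$ compact), so $r(K_J)$ is a positive simple eigenvalue of $K_J$ with a strictly positive eigenfunction; since $K_J$ is selfadjoint one has $r(K_J)=\sup\sigma(K_J)$, and therefore $\lambda_1:=r(K_J)-a$ is the largest eigenvalue of $L$, it is simple and isolated, and its eigenfunction $\Phi_1$, normalized by $\|\Phi_1\|_{L^2(\Omega)}=1$, is strictly positive. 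Moreover $\Phi_1=\tfrac{1}{r(K_J)}K_J\Phi_1\in\mathcal{C}_b(\Omega)$, and since $\Omega$ is compact $\Phi_1\in L^p(\Omega)$ for every $p$, so all the integrability requirements of Proposition \ref{projection_spectro_indep_p} hold with $p_0=1$, $p_1=\infty$ (note $2\in[1,\infty]$).

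Next I would decompose $\sigma(L)=\sigma_1\cup\sigma_2$ with $\sigma_1=\{\lambda_1\}$ and $\sigma_2=\sigma(L)\setminus\{\lambda_1\}$. Since $\sigma(L)$ is compact and $\lambda_1$ is isolated and maximal, $\sigma_2$ is a compact set (nonempty, as it contains $-a$ because $0\in\sigma(K_J)$) with $\delta_2:=\max\sigma_2<\lambda_1=:\delta_1$, so the hypotheses of Theorem \ref{Asymptotic behavior} are met. By Proposition \ref{projection_spectro_indep_p} the Riesz projection $Q_{\sigma_1}$ on $X$ satisfies $Q_{\sigma_1}(u_0)=\big(\int_{\Omega}u_0\,\Phi_1\big)\Phi_1=C^*\Phi_1$; since $Q_{\sigma_1}$ commutes with $e^{Lt}$ and $L\Phi_1=\lambda_1\Phi_1$, we get $Q_{\sigma_1}(u)(t)=e^{Lt}(C^*\Phi_1)=C^*e^{\lambda_1 t}\Phi_1$. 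Theorem \ref{Asymptotic behavior} then gives, for every $\mu>\delta_2$,
\[
\lim_{t\to\infty}\big\|e^{-\mu t}\big(u(t)-Q_{\sigma_1}(u)(t)\big)\big\|_X=0 ,
\]
and taking $\mu=\lambda_1$ (allowed since $\lambda_1>\delta_2$) this reads exactly $\lim_{t\to\infty}\|e^{-\lambda_1 t}u(t)-C^*\Phi_1\|_X=0$, which is the assertion.

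The main obstacle is not a single hard estimate but the careful bookkeeping needed to legitimately invoke the earlier results: one must check that $r(K_J)=\sup\sigma(K_J)$ (so that $\lambda_1$ is genuinely the top of $\sigma(L)$ and the gap $\delta_1-\delta_2>0$ is available), that $\Phi_1$ has the $X$-uniform integrability demanded by Proposition \ref{projection_spectro_indep_p}, and that the Riesz projection computed in $L^2(\Omega)$ transfers unchanged to every admissible $X$ — all of which hinge on the $X$-independence of the spectrum of $K_J-aI$ established earlier.
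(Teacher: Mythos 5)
Your proposal is correct and follows essentially the same route as the paper: establish $X$-independence of the spectrum, use the Kre\u{\i{}}n--Rutman result (Proposition \ref{spectral_radius_K}) to identify the top of $\sigma(K_J-aI)$ as a simple isolated eigenvalue with strictly positive eigenfunction, identify the Riesz projection with the $L^2$ Hilbert projection via Proposition \ref{projection_spectro_indep_p}, and conclude with Theorem \ref{Asymptotic behavior}. Your explicit verification that $r(K_J)=\sup\sigma(K_J)$ and that $\sigma_2$ is nonempty with a spectral gap is a welcome bit of bookkeeping that the paper leaves implicit.
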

\begin{proof} 
	 From Proposition \ref{sigma_independiente}, we have that  
	 $\sigma_{X}(K_J)$ is independent of $X$. Moreover, since $J(x,y)=J(y,x)$, 
	 then from  Proposition \ref{sigma(K)}, we know that  $\sigma(K_J)\setminus 
	 \{0\}$ is a real sequence of eigenvalues $\left\{\mu_n\right\}_{n\in\mathbb{N}}
	 $ of  finite multiplicity that converges to $0$. 
	 Furthermore, the hypotheses of Proposition 
	 \ref{spectral_radius_K} are satisfied, then  the largest eigenvalue, $
	 \lambda_1=r(K_J)$, is an isolated  simple eigenvalue, and the eigenfunction 
	 $\Phi_1\in\mathcal{C}_b(\Omega)$ associated to it, can be taken positive. Since the spectrum does not depend 
	 on $X$, we have that, $\Phi_1\in X$, 
	in particular $\Phi_1\in L^p(\Omega)\cap L^{p'}(\Omega)$, and $\Phi_1\in 
	\mathcal{C}_b(\Omega)\cap L^{1}(\Omega)$. Then the spectrum of
        $K_J-aI$ is  $\left\{\lambda_n = \mu_n - a \right\}_{n\in\mathbb{N}}$ and $\Phi_1$ is a positive 
	 eigenfunction associated to $\lambda_1$.

%	Thanks to Proposition \ref{espectro_K-h}, we know that the spectrum, 
%	$\sigma_X(K_J-aI)\setminus \{-a\}$, is a real sequence of eigenvalues 
%	$\left\{\lambda_n\right\}_{n\in\mathbb{N}}=\left\{\mu_n\right\}_
%	{n\in\mathbb{N}}-a$, of finite multiplicity that converges to $-a$.
	 
%	 Now, we consider $\sigma_1=\{\lambda_1\}$ and  $\sigma_2=\{
%	 \lambda_2,...,\lambda_n,...\}\cup\{-a\}$, and let $\Phi_1$ be a positive 
%	 eigenfunction associated to $\lambda_1$.

	Thus, for $u_0\in X$ thanks to Theorem \ref
	{Asymptotic behavior}, the solution of \eqref
	{dirichlet} satisfies
	\begin{displaymath}  %%\label{asymp_Drich_1}
	\lim\limits_{t\rightarrow \infty} \|e^{-\lambda_1t}\big( u(t)-Q_{\sigma_1}(u)(t)
	\big)\|_
	{X}=0.
	\end{displaymath}
and by Proposition  \ref{projection_spectro_indep_p} we have
$Q_{\sigma_1}=P$. 
%
%Also, since  $J(x,y)=J(y,x)$, from Proposition \ref{adjoint_operator}, $ K_J-aI$ is 
%	selfadjoint in $L^2(\Omega)$, then we can apply  Proposition \ref{projection_spectro_indep_p} to obtain
%        $P_{\sigma_1}=Q_{\sigma_1}$. 
Thus, 	since $u(x,t)=e^{(K_J-aI)t}u_0(x)$, 	 we have that 
	\begin{displaymath}  %%\label{ref_diric_0}
		Q_{\sigma_1}(u)(t)=Q_{\sigma_1}(e^{(K_J-aI)t}u_0)=e^{(K_J-aI)t}
		Q_{\sigma_1}(u_0) = C^{*} e^{(K_J-aI)t} \Phi_1 =
                C^{*} e^{\lambda_1t}\Phi_1.
	\end{displaymath}
where $
C^*= \int_{\Omega}u_0 \Phi_1$,
and we get the result. 
\end{proof}
%
%. Then,  it holds that 
%	 \begin{equation}\label{projection}
%	Q_{\sigma_1}(u_0)=P_{\sigma_1}(u_0)=C^*\Phi_1,
%	\end{equation}
%
%	On the other hand, since $P_{\sigma_1}(u_0)(x)=C^*\Phi_1$, where  
%	$\displaystyle C^*=\frac{\int_{\Omega}u_0(x)\Phi_1(x)dx}{\int_
%	{\Omega}\Phi_1(x)^2dx}$, then 
%	\begin{equation}\label{ref_dirich_1}
%		e^{(K-aI)t}P_{\sigma_1}(u_0)(x)=e^{(K-aI)t}C^*\Phi_1.
%	\end{equation} 
%	Moreover, $\Phi_1$ is an eigenfunction associated to $\lambda_1$ and $e^{(K-aI)t}=
%	\sum \frac{(K-aI)^nt^n}{n!}$, then we have
%	\begin{equation}\label{ref_dirich_2}
%		e^{(K-aI)t}\Phi_1\!=\!\!\sum 
%		\frac{(K-aI)^nt^n}{n!}\Phi_1\!=\!\!\sum \!
%		\frac{(K-aI)^n\Phi_1\,t^n}{n!}=\!\! \sum \!
%		\frac{\lambda_1^n\Phi_1\,t^n}{n!}\!= e^{\lambda_1t}\Phi_1.
%	\end{equation}
%	Hence, from \eqref{ref_diric_0}, \eqref{ref_dirich_1} and \eqref{ref_dirich_2}, 
%	\begin{equation}\label{asymp_Drich_2}
%	P_{\sigma_1}(u)(x,t)=C^*e^{\lambda_1t}\Phi_1(x).
%	\end{equation}
%
%	Therefore, thanks to \eqref{asymp_Drich_1} and \eqref{asymp_Drich_2}
%	\begin{equation*}
%	\lim\limits_{t\rightarrow \infty} \|e^{-\lambda_1t}u(t)-C^*\Phi_1\|_{X}=0.
%	\end{equation*}

%\begin{remark}
%In particular, if for the ``Dirichlet" problem, the initial data $u_0$ is bounded then we 
%have the following asymptotic behavior
%\[\lim\limits_{t\rightarrow \infty}\max\limits_{x\in\Omega} |e^{-\lambda_1t}u(x,t)-C^*
%\Phi_1(x)|=0.\]
%\end{remark}
%\noindent 

%%%%%%%%%%%%%%%%%%%%%%%%%%%%%%%%%%%%%%%%
%%%%%%%%%%%%%% Asymptotic behavior Neumann %%%%%%%%%%
%%%%%%%%%%%%%%%%%%%%%%%%%%%%%%%%%%%%%%%%

 \noindent\textsc{\bf Case $h=h_0\in L^{\infty}(\Omega)$. } Assume
 $J\in L^{\infty}(\Omega,  L^1(\Omega))$  and  consider  
the problem
\begin{equation}\label{neumann}
 \left\{
 \begin{array}{ll}
 u_t(x,t) & =(K_J-h_0I)u (x,t)\\
 u(x,0) & =u_0(x),\mbox{   with } u_0\in L^p(\Omega)
 \end{array}
 \right.
 \end{equation}
%  This problem is the homogeneous ``Neumann problem'' that appears in 
%  section $1$ or \cite{Rossi}.\\
 
 In the following proposition, we prove that the solution of \eqref{neumann} 
 goes  exponentially in norm $X$ to the mean value in $\Omega$ of the initial data.
 
\begin{prop}\label{asymp_beh_Neumann}
	Let 
%	$(\Omega,\mu,d)$ be a metric measure space, with 
	$\mu(\Omega)<\infty$, let $X=L^p(\Omega)$, with $1\le p\le\infty$ or 
	$X=\mathcal{C}_b(\Omega)$. We assume 
%	\begin{itemize}
%		\item if $X=L^p(\Omega)$, with $1\le q\le\infty$, we assume $h\in L^
%		{\infty}(\Omega)$,
%		\item if $X=\mathcal{C}_b(\Omega)$, we assume $h\in\mathcal{C}_b
%		(\Omega)$.
%	\end{itemize} 
	 $K_J\in \mathcal{L}(L^{1}(\Omega),\mathcal{C}_b(\Omega))
	$ is compact, (see Proposition \ref{K_compact}) and  $J$ 
	satisfies $J\in L^{\infty}(\Omega, L^1(\Omega))$,  
	$J(x,y)=J(y,x)$ and
	 \begin{displaymath}  %%\label{hyp_J_neumann_pr}
	 	J(x,y)>0,\;\forall x,\,y\in\Omega\; \mbox{such that}\; 
		d(x,y)<R,\; \mbox{for some} \;R>0.
	\end{displaymath}
	We also assume that $h_0(x)>\alpha>0$, for all $x\in\Omega$. 
	
	Then the solution $u$ of \eqref{neumann} satisfies that
	\begin{displaymath}  %%\label{asymp_beh_neumann}
		\lim\limits_{t\rightarrow \infty} \left\|e^{\beta t}\left( u(t)-
		\displaystyle\frac{1}{\mu(\Omega)}\int_{\Omega}u_0 \right)\right\|_
		{X}	=0,
	\end{displaymath}
	for some  $\beta>0$.  
\end{prop}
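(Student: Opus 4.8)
The plan is to apply the Riesz-projection results of the preceding subsection with $L=K_J-h_0I$, after pinning down the spectral picture of $L$.

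First I would record the structure of $\sigma_X(L)$. Since $\mu(\Omega)<\infty$ and $K_J\in\mathcal{L}(L^1(\Omega),\mathcal{C}_b(\Omega))$ is compact, the arguments of Propositions~\ref{K_compact} and \ref{independent_spectrum_K_h} give $K_J\in\mathcal{L}(X,X)$ compact, hence $L=K_J-h_0I\in\mathcal{L}(X,X)$, with $\sigma_X(L)$ independent of $X$; moreover $L$ is selfadjoint in $L^2(\Omega)$ by Proposition~\ref{adjoint_operator}. As $h_0\ge\alpha>0$, $J$ is symmetric and strictly positive near the diagonal, and $\Omega$ is $R$-connected (being compact and connected, by Lemma~\ref{compact_connected_R_connected}), Corollary~\ref{spectrum_negative}(ii) yields that $\sigma_X(L)\subset(-\infty,0]$, that $0$ is an isolated simple eigenvalue whose eigenspace is exactly the constants, and that $\sigma_X(L)\setminus\{0\}$ is a sequence of finite-multiplicity eigenvalues accumulating only in $\overline{R(-h_0)}\subset(-\infty,-\alpha]$. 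In particular $\sigma_2:=\sigma_X(L)\setminus\{0\}$ is a closed bounded subset of $(-\infty,0)$, so $\delta_2:=\sup\{\,\mbox{Re}\,\lambda:\lambda\in\sigma_2\,\}<0$; put $\beta_0:=-\delta_2>0$.

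Next I would set $\sigma_1=\{0\}$ and $\delta_1=0$, so that $\delta_2<\mbox{Re}(\sigma_1)\le\delta_1$ and $\mbox{Re}(\sigma_2)\le\delta_2<\delta_1$. Theorem~\ref{Asymptotic behavior} then gives $\lim_{t\to\infty}\|e^{-\mu t}(u(t)-Q_{\sigma_1}(u)(t))\|_X=0$ for every $\mu>\delta_2$; choosing any $\beta\in(0,\beta_0)$ and $\mu=-\beta$ yields $\lim_{t\to\infty}\|e^{\beta t}(u(t)-Q_{\sigma_1}(u)(t))\|_X=0$. It then remains to identify $Q_{\sigma_1}(u)(t)$. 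Normalising the constant eigenfunction by $\|\Phi_1\|_{L^2(\Omega)}=1$ gives $\Phi_1\equiv\mu(\Omega)^{-1/2}\in L^p(\Omega)\cap L^{p'}(\Omega)$ for all $p$ and $\Phi_1\in\mathcal{C}_b(\Omega)\cap L^1(\Omega)$, so the hypotheses of Proposition~\ref{projection_spectro_indep_p} hold with $p_0=1$, $p_1=\infty$, and $Q_{\sigma_1}(u)=\big(\int_\Omega u\,\Phi_1\big)\Phi_1=\frac{1}{\mu(\Omega)}\int_\Omega u$, a constant function. Since $Q_{\sigma_1}$ commutes with $e^{Lt}$, we have $Q_{\sigma_1}(u)(t)=e^{Lt}Q_{\sigma_1}(u_0)$; and for any constant $c$ one computes $Lc=K_Jc-h_0c=c\,h_0-c\,h_0=0$, so $e^{Lt}$ fixes the constants and $Q_{\sigma_1}(u)(t)=\frac{1}{\mu(\Omega)}\int_\Omega u_0$ for all $t\ge0$. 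Substituting this into the limit above gives the assertion.

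The bookkeeping (compactness, independence of the spectrum on $X$, the identity $Lc=0$) is routine. The one point that genuinely needs the hypothesis $h_0\ge\alpha>0$ is the strict spectral gap: it is what forces $0\notin\overline{R(-h_0)}$, hence (via Corollary~\ref{spectrum_negative}(ii)) that $0$ is \emph{isolated} in $\sigma_X(L)$, hence $\delta_2<0$, hence that the decay rate $\beta$ can be taken strictly positive. Without that lower bound the rest of the spectrum could reach up to $0$ and no uniform exponential rate would be available.
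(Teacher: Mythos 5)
Your proposal is correct and follows essentially the same route as the paper: compactness of $K_J$ on $X$, the spectral description of $K_J-h_0I$ from Theorem \ref{espectro_K-h} and Corollary \ref{spectrum_negative}(ii) (with $0$ an isolated simple eigenvalue with constant eigenfunctions and the rest of the spectrum bounded away from $0$), Theorem \ref{Asymptotic behavior} applied to $\sigma_1=\{0\}$, and the identification of the Riesz projection with $u\mapsto\frac{1}{\mu(\Omega)}\int_\Omega u$ via Proposition \ref{projection_spectro_indep_p}. Your closing remark correctly isolates the role of $h_0\ge\alpha>0$ in producing the spectral gap and hence the strictly positive rate $\beta$.
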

\begin{proof}
	Since  $K_J\in\mathcal{L}(L^{1}(\Omega),\mathcal{C}_b(\Omega))$ is compact, then 
	$K_J\in\mathcal{L}(X, X)$ is compact. Thanks to Theorem \ref{espectro_K-h}, we know that 
	 $$\sigma_X(K_J-h_0I)=\overline{R(-h_0)}\cup\left\{\mu_n\right\}_{n=1}^M,\quad \mbox
	{with } M\in\mathbb{N}\;\mbox{ or } M=\infty.$$ 
	If $M=\infty$, then $\{\mu_n\}_{n=1}^{\infty}$ is a sequence of eigenvalues of $K_J-h_0I$ 
	with finite multiplicity, that has accumulation points in $R(-h)$. Moreover, from  Proposition 
	\ref{independent_spectrum_K_h}, $\sigma_{X}(K_J-h_0I)$ is
        independent of $X$. Also, from  Corollary
        \ref{spectrum_negative}, we have that $\sigma_{X}(K_J- 	 h_0I)\le 0$, 
	 and $0$ is an isolated simple eigenvalue with only constant
         eigenfunctions. 
%Moreover, the constant 
%	 functions 
%	 $v$ in $\Omega$, satisfy that  
%	 $$(K-h_0I)v =0.$$
	 	 
	 Moreover, since $J(x,y)=J(y,x)$ and thanks to Proposition \ref{adjoint_operator}, 
	 $ K_J-h_0I$ is selfadjoint in $L^2(\Omega)$, thus, $	\{\mu_n\}\subset
	 \mathbb{R}$. 
%	From Proposition \ref
%	{intervalo_espectro} and Proposition \ref{Green's formulas}, we have 
%	that 
%	 \begin{equation}\label{spectrum_neumann_negative}
%	 	\sigma(K-h_0I)\le \sup\limits_{u\in L^2(\Omega)}\langle (K-h_0I)u,u\rangle_
%	{L^2,L^2}\le 0.
%	\end{equation}
	Hence, we consider  $\sigma_1=\{0\}$ an isolated part of $\sigma(K_J-
	h_0I)$, with associated 
	eigenfunction $\Phi_1=1/\mu(\Omega)^{1/2}$, and $\sigma_2=\sigma
	(K_J-h_0I)
	\setminus \{0\}$. Then thanks to Theorem \ref{Asymptotic
          behavior},  
	\begin{displaymath}  %%\label{ref_neuman_1}
		\lim\limits_{t\rightarrow \infty} \left\|e^{\beta t}
		\left( u(t)-Q_{\sigma_1}(u)(t)\right)\right\|_{X}=0,
	\end{displaymath}
for some  $\beta >0$ and by Proposition \ref{projection_spectro_indep_p}, and since  
        $Q_{\sigma_1}=P$. Since   $u(x,t)=e^{(K_J-h_0I)t}u_0(x)$, 
	\begin{displaymath} %%\label{ref_neuman_2}
	\begin{array}{ll}
		Q_{\sigma_1}(u)(t)& =Q_{\sigma_1}(e^{(K_J-h_0I)t}
		u_0) = e^{(K_J-h_0I)t} Q_{\sigma_1}(u_0) 
		\smallskip\\&\displaystyle= \left(
                \int_{\Omega} u_0\Phi_1\right)  e^{(K_J-h_0I)t} \Phi_1
              = \big( \int_{\Omega} u_0\Phi_1 \big) \Phi_1 =
              \frac{1}{\mu(\Omega)}\int_{\Omega}u_0 . 
	\end{array}
	\end{displaymath}
%which gives $Q_{\sigma_1}(u)(x,t) = \int_{\Omega} u_0\Phi_1
%		dx\,\Phi_1 = \frac{1}{\mu(\Omega)}\int_{\Omega}u_0$. 
\end{proof}

\begin{remark}
Propositions \ref{asymp_beh_Dirichlet} and \ref
{asymp_beh_Neumann} where proven    in \cite{Rossi}, in the case where
 $\Omega$ is an open set in $\R^{N}$ and for $X= L^2(\Omega)$ or $X=
\mathcal{C}(\overline{\Omega})$. 
\end{remark}

\addcontentsline{toc}{section}{Bibliography}
%\bibliography{bib_sastregom}

\end{document}